\documentclass[11pt,a4paper]{article}
\title{Big jobs arrive early: \\ From critical queues to random graphs}
\date{\today}
\author{Gianmarco Bet, Remco van der Hofstad, and Johan S.H. van Leeuwaarden}

\newcommand{\FE}{F_{\sss E}}
\newcommand{\fE}{f_{\sss E}}

\newcommand{\served}[1]{\mathfrak S_{#1}}


\usepackage[]{xcolor}
\usepackage[]{todonotes} 
\usepackage{amsfonts}
\usepackage{amsmath}\numberwithin{equation}{section}
\usepackage{amsthm}
\usepackage{mathtools} 
\usepackage{geometry} 
\usepackage{enumitem} 
\usepackage{dsfont}
\usepackage{bm} 
\usepackage{soul} 
\usepackage{diagbox} 
\usepackage{standalone} 
\usepackage{pgfplots}

\geometry{
  includeheadfoot,
  margin = 2.54cm
}


\theoremstyle{plain}
\newtheorem{lemma}{Lemma}
\newtheorem{theorem}{Theorem}
\newtheorem*{theorem*}{Theorem}
\newtheorem{corollary}{Corollary}

\theoremstyle{definition}

\newtheorem{remark}{Remark}

\theoremstyle{remark}

\newcommand{\Exp}{\mathrm{Exp}}
\newcommand{\sss}{\scriptscriptstyle}
\newcommand{\nin}{\notin}

\newcommand{\nnl}{\notag\\}
\newcommand{\sr}{\stackrel}
\newcommand{\E}{\mathbb E}

\newcommand{\OP}{O_{\mathbb P}}
\newcommand{\oP}{o_{\mathbb P}}
\newcommand{\DG}{\smash{\Delta_{(i)}/G/1}}

\newcommand{\dconv}{\sr{\mathrm d}{\rightarrow}}
\newcommand{\Pconv}{\sr{\mathbb P}{\rightarrow}}
\newcommand{\asconv}{\sr{\mathrm{a.s.}}{\rightarrow}}
\newcommand{\e}{\mathrm e}
\newcommand{\DGa}{\smash{\Delta\!%
\begin{smallmatrix}
\!\alpha \\
(i)
\end{smallmatrix}/G/1}}

\newcommand{\bl}[1]{#1}


\newcommand{\ES}{\E_{\sss S}}
\newcommand{\PS}{\mathbb P_{\sss S}}
\newcommand{\OPS}{O_{\PS}}

\begin{document}
\maketitle

\begin{abstract}
We consider a queue to which only a finite pool of $n$ customers can arrive, at times depending on their service requirement.  A customer with stochastic service requirement $S$ arrives to the queue after an exponentially distributed time with mean $S^{-\alpha}$ for some $\alpha\in[0,1]$; so larger service requirements trigger customers to join earlier. This finite-pool queue interpolates between two previously studied cases: $\alpha = 0$ gives the so-called $\DG$ queue \cite{bet2014heavy} and $\alpha = 1$ is closely related to the exploration process for inhomogeneous random graphs \cite{bhamidi2010scaling}.
We consider the asymptotic regime in which the pool size $n$ grows to infinity and establish that the scaled queue-length process converges  to a diffusion process with a negative quadratic drift. We leverage this asymptotic result to characterize the head start that is needed to create a long period of activity. We also describe how this first busy period of the queue gives rise to a critically connected random forest.
\end{abstract}

\section{Introduction} \label{sec:introduction}
This paper introduces the $\DGa$ queue that models a situation in which only a finite pool of $n$ customers will join the queue. These $n$ customers are triggered to join the queue after independent exponential times, but the rates of their exponential clocks depend on their service requirements. When a customer requires $S$ units of service, its exponential clock rings after an exponential time with mean $S^{-\alpha}$ with $\alpha\in[0,1]$. Depending on the value of the free parameter $\alpha$, the arrival times are i.i.d.~($\alpha = 0$) or decrease with the service requirement ($\alpha\in(0,1]$). The queue is attended by a single server that starts working at time zero, works at unit speed, and serves the customers in order of arrival. At time zero, we allow for the possibility that $i$ of the $n$ customers have already joined the queue, waiting for service. We will take $i\ll n$, so that without loss of generality we can assume that at time zero there are still $n$ customers waiting for service.  These initial customers are numbered $1, \ldots, i$ and the customers that arrive later are numbered $i + 1, i + 2,\ldots$ in order of arrival. Let $A(k)$ denote the number of customers arriving during the service time of the $k$-th customer. The busy periods of this queue will then be completely characterized by the initial number of customers $i$ and the random variables $\left(A(k)\right)_{k\geq 1}$. Note that the random variables $\left(A(k)\right)_{k\geq 1}$ are not i.i.d.~due to the finite-pool effect and the service-dependent arrival rates. We will model and analyze this queue using the queue-length process embedded at service completions.

We consider the $\DGa$ queue in the large-system limit $n\to\infty$, while imposing at the same time a heavy-traffic regime that will stimulate the occurrence of a substantial first busy period. By substantial we mean that the server can work without idling for quite a while, not only serving the initial customers but also those arriving somewhat later. For this regime we show that the embedded queue-length process converges to a Brownian motion with negative quadratic drift. For the case $\alpha = 0$, referred to as the  $\DG$ queue with i.i.d.~arrivals \cite{honnappa2015delta, honnappa2014transitory}, a similar regime was studied in \cite{bet2014heavy}, while for $\alpha = 1$ it is closely related to the critical inhomogeneous random graph studied in \cite{bhamidi2010scaling,Jose10}.



While the queueing process consists of alternating busy periods and idle periods, in the $\DGa$ queue we naturally focus on the first busy period. After some time, the activity in the queue inevitably becomes negligible. The early phases of the process are therefore of primary interest, when the head start provided by the initial customers still matters and when the rate of newly arriving customers is still relatively high. The head start and strong influx together lead to a substantial first busy period, and essentially determine the relevant time of operation of the system.

We also consider the structural properties of the first busy period in terms of a random graph.
Let the random variable $H(i)$ denote the number of customers served in the first busy period, starting with $i$ initial customers. We then  associate a (directed) random graph to the queueing process as follows. Say $H(i) = N$ and consider a graph with vertex set $\{1, 2,\ldots,N\}$ and in which two vertices $r$ and $s$ are joined by an edge if and only if the $r$-th customer arrives during the service time of the $s$-th customer. If $i = 1$, then the graph is a rooted tree with $N$ labeled vertices, the root being labeled 1. If $i > 1$, then the graph is a forest consisting of $i$ distinct rooted trees whose roots are labeled $1,\ldots,i$, respectively. The total number of vertices in the forest is $N$.

This random forest is exemplary for a deep relation between queues and random graphs, perhaps best explained by
interpreting the embedded  $\DGa$ queue as an \emph{exploration process}, a generalization of a branching process that can account for dependent random variables $\left(A(k)\right)_{k\geq 1}$. Exploration processes arose in the context of random graphs as a recursive algorithm to investigate questions concerning the size and structure of the largest components \cite{Aldo97}. For a given random graph, the exploration process declares vertices active, neutral or inactive. Initially, only one vertex is active and all others are neutral. At each time step one active vertex (e.g. the one with the smallest index) is explored, and it is declared inactive afterwards. When one vertex is explored, its neutral neighbors become active for the next time step. As time progresses, and more vertices are already explored (inactive) or discovered (active), fewer vertices are neutral. This phenomenon is known as the \emph{depletion-of-points effect} and plays an important role in the scaling limit of the random graph. Let $A(k)$ denote the neutral neighbors of the $k$-th explored vertex. The exploration process then has increments $\left(A(k)\right)_{k\geq 1}$ that each have a different distribution. The exploration process encodes useful information about the underlying random graph. For example, excursions above past minima are the sizes of the connected components. The critical behavior of random graphs connected with the emergence of a giant component has received tremendous attention \cite{ addario2012continuum,bhamidi2014augumented,bhamidi2010scaling,BhaHofLee09b,
dhara2016critical, dhara2016heavy, Jose10, hofstad2010critical, hofstad2016mesoscopic}. Interpreting active vertices as being in a queue, and vertices being explored as customers being served, we see that the exploration process and the (embedded) $\DGa$ queue driven by $\left(A(k)\right)_{k\geq 1}$ are identical.

The analysis of the $\DGa$ queue and associated random forest is challenging because the random variables $\left(A(k)\right)_{k\geq 1}$ are not i.i.d. In the case of i.i.d.~$\left(A(k)\right)_{k\geq 1}$,  there exists an even deeper connection between queues and random graphs, established via branching processes instead of exploration processes \cite{kendall1951some}. To see this, declare the initial customers in the queue to be the $0$-th generation. The customers (if any) arriving during the total service time of the initial $i$ customers form the $1$-st generation, and the customers (if any) arriving during the total service time of the customers in generation $t$ form generation $t+1$ for $t \geq 1$. Note that the total progeny of this Galton-Watson branching process has the same distribution as the random variable $H(i)$ in the queueing process. Through this connection, properties of branching processes can be carried over to the queueing processes and associated random graphs  \cite{duquesne2002random, legall2005random,limic2001lifo,takacs1988queues,takacs1993limit,takacs1995queueing}. Tak\'acs \cite{takacs1988queues,takacs1993limit,takacs1995queueing} proved several limit theorems for the case of i.i.d.~$\left(A(k)\right)_{k\geq 1}$, in which case the queue-length process and derivatives such as the first busy period weakly converge to (functionals of) the Brownian excursion process. In that classical line, the present paper can be viewed as an extension to exploration processes with more complicated dependency structures in $\left(A(k)\right)_{k\geq 1}$.


In Section \ref{sec:model_description} we describe the $\DGa$ queue and associated graphs in more detail and present our main results. The proof of the main theorem, the stochastic-process limit for the queue-length process in the large-pool heavy-traffic regime, is presented in Sections \ref{sec:overview_proof} and \ref{sec:proof_main_result}. Section \ref{sec:conclusion} discusses some interesting questions related to the $\DGa$ queue and associated random graphs that are left open.

\section{Model description} \label{sec:model_description}
We consider a sequence of queueing systems, each with a finite (but growing) number $n$ of potential customers labelled with indices $i\in[n]:=\{1,\ldots, n\}$. Customers have i.i.d.~service requirements with distribution $F_{\sss S}(\cdot)$. We denote with $S_i$ the service requirement of customer $i$ and with $S$ a generic random value, and $S_i$ and $S$ all have distribution $F_{\sss S}(\cdot)$. In order to obtain meaningful limits as the system grows large, we scale the service speed  by $n/(1+\beta n^{-1/3})$ with $\beta\in\mathbb R$ so that the service time of customer $i$ is given by
\begin{equation}
\tilde S_i = \frac{S_i(1+\beta n^{-1/3})}{n}.
\end{equation}%
We further assume that $\E[{S}^{2+\alpha}]<\infty$.

If the service requirement of customer $i$ is $S_i$, then, conditioned on $S_i$, its arrival time $T_i$ is assumed to be exponentially distributed with mean $1/(\lambda S_i^{\alpha})$, with $\alpha\in[0,1]$ and $\lambda>0$. Hence
\begin{equation}\label{eq:arrival_times}%
T_i\sr{\mathrm{d}}{=}\Exp_i(\lambda S_i^{\alpha})
\end{equation}%
with $\sr{\mathrm{d}}{=}$ denoting equality in distribution and $\Exp_i(c)$  an exponential random variable with mean $1/c$ independent across $i$. Note that {conditionally on the service times}, the arrival times are independent (but not identically distributed). We introduce $c(1), c(2), \ldots, c(n)$ as the indices of the customers in order of arrival, so that $T_{c(1)} \leq T_{c(2)}\leq T_{c(3)}\leq\ldots$ almost surely.

We will study the queueing system in heavy traffic, in a similar heavy-traffic regime as in \cite{bet2014heavy, bet2015finite}. The initial traffic intensity $\rho_n$ is kept close to one by imposing the relation
\begin{equation}\label{eq:crit}%
\rho_n := \lambda_n \mathbb{E}[S^{1+\alpha}](1+\beta n^{-1/3}) = 1+\beta n^{-1/3} + \oP(n^{-1/3}),
\end{equation}%
where $\lambda = \lambda_n$ can depend on $n$ and $f_n = \oP(n^{-1/3})$ is such that $\lim_{n\rightarrow\infty} f_nn^{1/3}\Pconv 0$.  The parameter $\beta$ then determines the position of the system inside the critical window: the traffic intensity is greater than one for $\beta>0$, so that the system is initially overloaded, while the system is initially underloaded for $\beta<0$.

Our main object of study is the queue-length process embedded at service completions, given by $Q_n(0) = i$ and
\begin{align}\label{eq:queue_length_process}%
Q_n(k) &= (Q_n(k-1) + A_n(k)-1)^+,
\end{align}%
with $x^+ = \max\{0,x\}$ and $A_n(k)$ the number of arrivals during the $k$-th service given by
\begin{equation}\label{eq:number_arrivals_during_one_service}
A_n(k) = \sum_{i\nin\nu_{k}}\mathds{1}_{\{T_{i}\leq \tilde S _{c(k)}\}}
\end{equation}
where $\nu_{k}\subseteq[n]$ denotes the set of customers who have been served or are in the queue at the start of the $k$-th service. Note that
\begin{equation}\label{eq:cardinality_nu}%
\vert \nu_{k} \vert = (k-1) + Q_n(k-1) + 1 = k + Q_n(k-1).
\end{equation}%
%
Given a process $t\mapsto X(t)$, we define its \emph{reflected version} through the reflection map $\phi(\cdot)$ as
\begin{equation}%
\phi(X)(t) := X(t) - \inf_{s\leq t} X(s)^-.
\end{equation}%
%
The process $Q_n(\cdot)$ can alternatively be represented as the reflected version of a certain process $N_n(\cdot)$, that is
\begin{equation}\label{eq:queue_length_process_reflected}%
Q_n(k) = \phi (N_n)(k),
\end{equation}%
where $N_n(\cdot)$ is given by $N_n(0)=i$ and
\begin{align}\label{eq:pre_reflection_queue_length_process}%
N_n(k)&=N_n(k-1)+A_n(k)-1.
\end{align}%
We assume that whenever the server finishes processing one customer, and the queue is empty, 
the customer to be placed into service is chosen according to the following size-biased distribution:
\begin{align}\label{eq:choice_of_customer_when_queue_is_empty}%
\mathbb P (\mathrm{customer}~j~\mathrm{is~placed~in~service}\mid \nu_{i-1}) = \frac{S_j^{\alpha}}{\sum_{l\nin \nu_{i-1}}S_l^{\alpha}},\qquad j\nin\nu_{i-1},
\end{align}%
where we tacitly assumed that customer $j$ is the $i$-th customer to be served. \bl{With definitions \eqref{eq:number_arrivals_during_one_service} and \eqref{eq:choice_of_customer_when_queue_is_empty}, the process \eqref{eq:queue_length_process} describes the $\DGa$ queue with exponential arrivals \eqref{eq:arrival_times}, embedded at service completions. }
\begin{remark}[A directed random tree] \label{rem:directed_random_tree_model} The embedded queueing process \eqref{eq:queue_length_process} and \eqref{eq:queue_length_process_reflected} gives rise to a certain directed rooted tree. To see this, associate a vertex $i$ to customer $i$ and let $c(1)$ be the root. Then, draw a directed edge to $c(1)$ from $c(2),\ldots, c(A_n(1)+1)$ so to all customers who joined during the service time of $c(1)$. Then, draw an edge from all customers who joined during the service time of $c(2)$ to $c(2)$, and so on. This procedure draws a directed edge from $c(i)$ to $c(i + \sum_{j=1}^{i-1}A_n(j)),\ldots,c(i + \sum_{j=1}^{i}A_n(j) )$ if $A_n(i) \geq 1$. The procedure stops when the queue is empty and there are no more customers to serve. When $Q_n(0) = i = 1$ (resp.~$i\geq 2$), this gives a random directed rooted tree (resp.~forest). The degree of vertex $c(i)$ is $1 + \vert A_n(i)\vert$ and the total number of vertices in the tree (resp.~forest) is given by
\begin{equation}
H_{Q_n}(0) = \inf\{k\geq0:Q_n(k)=0\},
\end{equation}
the hitting time of zero of the process $Q_n(\cdot)$.
\end{remark}
\begin{remark}[An inhomogeneous random graph] If $\alpha = 1$, the random tree constructed in Remark \ref{rem:directed_random_tree_model} is distributionally equivalent to the tree spanned by the exploration process of an inhomogeneous random graph. Let us elaborate on this. An inhomogeneous random graph is a set of vertices $\{i:i\in[n]\}$ with (possibly random) weights $(\mathcal W_i)_{i\in[n]}$ and edges between them. In a \emph{rank-1 inhomogeneous random graph}, given $(\mathcal W_i)_{i\in[n]}$, $i$ and $j$ share an edge with probability
\begin{equation}%
p_{i\leftrightarrow j} := 1- \exp\Big(-\frac{\mathcal W_i\mathcal W_j}{\sum_{i\in[n]}\mathcal W_i}\Big).
\end{equation}%

The tree constructed from the $\smash{\Delta\!%
\begin{smallmatrix}
\!1 \\
(i)
\end{smallmatrix}/G/1}$ queue then corresponds to the exploration process of a rank-1 inhomogeneous random graph, defined as follows. Start with a first arbitrary vertex and reveal all its neighbors. Then the first vertex is discarded and the process moves to a neighbor of the first vertex, and reveals its neighbors. This process continues by exploring the neighbors of each revealed vertex, in order of appearance.  By interpreting each vertex as a different customer, this exploration process can be coupled to a  $\smash{\Delta\!%
\begin{smallmatrix}
\!1 \\
(i)
\end{smallmatrix}/G/1}$ queue, for a specific choice of $(\mathcal W_i)_{i=1}^n$ and $\lambda_n$. Indeed, when $\mathcal W_i = (1+\beta n^{-1/3})S_i$ for $i=1,\ldots, n$, the probability that $i$ and $j$ are connected is given by
\begin{align}%
p_{j\leftrightarrow i} &= 1 - \exp\Big( -(1+\beta n^{-1/3})\frac{S_i}{n}\frac{ S_j}{\sum_{l\in[n]}S_l/n}\Big) \nnl
&= 1 - \exp\Big( -\tilde{S}_iS_j\frac{n}{\sum_{i\in[n]}S_i}\Big)\nnl
&=\mathbb P(T_j\leq \tilde S_i\vert (S_i)_{j\in[n]}),
\end{align}%
where
\begin{equation}%
T_j \sim \exp (\lambda_n),
\end{equation}%
and $\lambda_n = {n}/\sum_{i\in[n]} S_i$.
The rank-1 inhomogeneous random graph  with weights $(S_i)_{i=1}^n$ is said to be \emph{critical} (see \cite[(1.13)]{bhamidi2010scaling}) if
\begin{equation}\label{eq:crit_inhomogeneous}%
\frac{\sum_{i\in[n]}S_i^2}{\sum_{i\in[n]}S_i}=\frac{\E[S^2]}{\E[S]} + \oP(n^{-1/3})= 1 + \oP(n^{-1/3}).
\end{equation}%
Consequently, if $\beta=0$ and $\lambda_n = n/\sum_{i\in[n]}S_i$, the heavy-traffic condition \eqref{eq:crit} for the $\smash{\Delta\!%
\begin{smallmatrix}
\!1 \\
(i)
\end{smallmatrix}/G/1}$ queue implies  the criticality condition \eqref{eq:crit_inhomogeneous} for the associated random graph (and vice versa).
\end{remark}
\begin{remark}[Results for the queue-length process]\label{rem:results_queue_length_process}
\bl{By definition, the embedded  queue \eqref{eq:queue_length_process} neglects the idle time of the server.} Via a time-change argument it is possible to prove that, in the limit, the (cumulative) idle time is negligible and the embedded queue is arbitrarily close to the queue-length process uniformly over compact intervals. This has been proven for the $\DG$ queue in \cite{bet2014heavy}, and  the techniques developed there can be extended to the $\DGa$ queue without additional difficulties. 
\end{remark}

\subsection{The scaling limit of the embedded queue}

All the processes we consider are elements of the space $\mathcal D := \mathcal D([0,\infty))$ of c\`adl\`ag functions that admit left limits and are continuous from the right.  To simplify notation, for a discrete-time process $X(\cdot):\mathbb N \rightarrow\mathbb R$, we write $X(t)$, with $t\in[0,\infty)$, instead of $X(\lfloor t\rfloor)$. Note that a process defined in this way has c\`adl\`ag paths. The space $\mathcal D$ is endowed with the usual Skorokhod $J_1$ topology. We then say that a process converges in distribution in $(\mathcal D, J_1)$ when it converges as a random measure on the space $\mathcal D$, when this is endowed with the $J_1$ topology.
{We are now able to state our main result. Recall that $Q_n(\cdot)$ is the embedded queue-length process of the $\DGa$ queue and let
\begin{equation}\label{def:rescaled_Q}
\mathbf{Q}_n(t) := n^{-1/3} Q_n(tn^{2/3})
\end{equation}%
be the diffusion-scaled queue-length process.

\begin{theorem}[Scaling limit for the $\DGa$ queue]\label{th:main_theorem_delta_G_1}
Assume that $\alpha\in[0,1]$, $\E[S^{2+\alpha}]<\infty$ and that the heavy-traffic condition \eqref{eq:crit} holds. Assume further that $\mathbf{Q}_n(0) = q $. Then, as $n\rightarrow\infty$,
\begin{equation}\label{eq:main_theorem_delta_G_1_conclusion}%
\mathbf Q_n( \cdot )\stackrel{\mathrm{d}}{\rightarrow} \phi(W)( \cdot )\qquad \mathrm{in}~(\mathcal D, J_1),
\end{equation}%
where $W(\cdot)$ is the diffusion process
\begin{equation}\label{eq:main_theorem_delta_G_1_diffusion_definition}%
W(t)= q + \beta t - \lambda\frac{\E[S^{1+2\alpha}]}{2\E[S^{\alpha}]}t^2 + \sigma B(t),
\end{equation}%
with $\sigma^2 = \lambda^2 \E[S^{\alpha}]\mathbb E[S^{2+\alpha}]$ and $B(\cdot)$ is a standard Brownian motion.
\end{theorem}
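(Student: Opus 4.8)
The plan is to use the standard martingale functional central limit theorem (FCLT) approach for the pre-reflection process $N_n(\cdot)$ and then transfer the convergence to $Q_n(\cdot) = \phi(N_n)(\cdot)$ via continuity of the reflection map. Since the increments $A_n(k)$ are not i.i.d., the key is to compute, at the right time scale, the conditional mean and variance of $A_n(k)$ given the history $\mathcal F_{k-1}$ (the service requirements and arrival indicators revealed in the first $k-1$ services). First I would write $N_n(k) = i + \sum_{j=1}^k (A_n(j)-1)$ and decompose it into its compensator plus a martingale:
\begin{equation}
N_n(k) = i + \sum_{j=1}^k \bigl(\E[A_n(j)\mid \mathcal F_{j-1}]-1\bigr) + M_n(k),\qquad M_n(k) := \sum_{j=1}^k \bigl(A_n(j)-\E[A_n(j)\mid \mathcal F_{j-1}]\bigr).
\end{equation}
From \eqref{eq:number_arrivals_during_one_service}, conditionally on $\tilde S_{c(j)}$ and on the set $\nu_j$ of already-seen customers, $A_n(j)$ is a sum of independent indicators $\mathds 1_{\{T_i\le \tilde S_{c(j)}\}}$ over $i\notin\nu_j$, each with (conditional) probability $1-\e^{-\lambda_n S_i^\alpha \tilde S_{c(j)}}$. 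Hence $\E[A_n(j)\mid\mathcal F_{j-1}] = \sum_{i\notin\nu_j}\E[1-\e^{-\lambda_n S_i^\alpha \tilde S_{c(j)}}]$, which to leading order equals $\lambda_n \tilde S_{c(j)}\sum_{i\notin\nu_j} S_i^\alpha$ minus a quadratic correction. Using $|\nu_j| = j + Q_n(j-1)$ and the law of large numbers for $\frac1n\sum_{i\notin\nu_j} S_i^\alpha \approx \E[S^\alpha]$ (here the size-biasing \eqref{eq:choice_of_customer_when_queue_is_empty} matters only at regeneration epochs, whose number is negligible in the busy period, so the empirical average of the unseen $S_i^\alpha$ stays close to $\E[S^\alpha]$), together with $\tilde S_{c(j)} = S_{c(j)}(1+\beta n^{-1/3})/n$ and $\E[S_{c(j)}^{1+\alpha}] \approx \E[S^{1+\alpha}]$ up to a size-bias that again only affects regeneration points, one gets
\begin{equation}
\E[A_n(j)\mid\mathcal F_{j-1}] - 1 \;\approx\; \rho_n - 1 \;-\; \lambda_n \frac{\E[S^\alpha]}{n}\bigl(j + Q_n(j-1)\bigr)\cdot\frac{\E[S^{1+2\alpha}]}{\E[S^\alpha]}\,\frac{1}{?}\;+\;\cdots,
\end{equation}
so that, after summing over $j\le tn^{2/3}$ and using \eqref{eq:crit} for the linear term $\rho_n-1 = \beta n^{-1/3}+\oP(n^{-1/3})$, the depletion term $\sum_{j\le tn^{2/3}} j/n \sim \tfrac12 t^2 n^{1/3}$ produces the quadratic drift $-\lambda\frac{\E[S^{1+2\alpha}]}{2\E[S^\alpha]}t^2$ after the $n^{-1/3}$ scaling in \eqref{def:rescaled_Q}, while the $\sum Q_n(j-1)/n$ contribution is of lower order. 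I would carry this out carefully, isolating: (i) the deterministic compensator expansion above; (ii) a bound showing the $\e^{-x}=1-x+O(x^2)$ correction and the $Q_n$-dependent term are $\oP(n^{1/3})$ after summation over an $O(n^{2/3})$-length interval, using $\E[S^{2+\alpha}]<\infty$ for the second-moment control of the quadratic correction; and (iii) a lemma controlling the fluctuation of $\frac1n\sum_{i\notin\nu_j}S_i^\alpha$ and of $S_{c(j)}$ around their means uniformly in $j\le Tn^{2/3}$.

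Next I would handle the martingale part. Its predictable quadratic variation is $\langle M_n\rangle(k) = \sum_{j=1}^k \Var(A_n(j)\mid\mathcal F_{j-1})$, and since $A_n(j)$ is conditionally a sum of independent indicators, $\Var(A_n(j)\mid\mathcal F_{j-1}) = \sum_{i\notin\nu_j}\E[\e^{-\lambda_n S_i^\alpha\tilde S_{c(j)}}(1-\e^{-\lambda_n S_i^\alpha\tilde S_{c(j)}})] + (\text{fluctuation in }\tilde S_{c(j)})$; the dominant contribution is $\approx \lambda_n \tilde S_{c(j)}\sum_{i\notin\nu_j}S_i^\alpha \approx \E[A_n(j)\mid\mathcal F_{j-1}] \approx \rho_n \to 1$ per step (the Poisson-thinning heuristic: mean $\approx$ variance $\approx 1$), plus a correction of order $\lambda_n^2 \Var$ coming from the variability of $S_{c(j)}$, which contributes $\lambda_n^2 \E[S^\alpha]\Var(\text{something})$. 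Summing over $j\le tn^{2/3}$ gives $n^{-2/3}\langle M_n\rangle(tn^{2/3}) \to \sigma^2 t$ with $\sigma^2 = \lambda^2\E[S^\alpha]\E[S^{2+\alpha}]$; I would double-check the constant by expanding $\E[(1-\e^{-\lambda_n S^\alpha \tilde S_{c}})^2]$ and $\E[\tilde S_c^2 S^{2\alpha}]$ and collecting the $n^{-2/3}$ terms, as the precise value of $\sigma^2$ in \eqref{eq:main_theorem_delta_G_1_diffusion_definition} must match. Together with a conditional Lindeberg/jump-size bound — each increment $A_n(j)-\E[A_n(j)\mid\mathcal F_{j-1}]$ is bounded in $L^2$ and the maximal jump over $[0,Tn^{2/3}]$ is $\oP(n^{1/3})$, again using $\E[S^{2+\alpha}]<\infty$ — the martingale FCLT (e.g. Ethier–Kurtz, or Whitt) yields $n^{-1/3}M_n(tn^{2/3})\dconv \sigma B(t)$ in $(\mathcal D,J_1)$, jointly with the convergence of the (now deterministic-plus-$\oP(1)$) scaled compensator to $q+\beta t - \lambda\frac{\E[S^{1+2\alpha}]}{2\E[S^\alpha]}t^2$.

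Combining, $n^{-1/3}N_n(tn^{2/3})\dconv W(t)$ in $(\mathcal D,J_1)$. Finally, because the reflection map $\phi$ is Lipschitz continuous on $(\mathcal D,J_1)$ (with respect to the uniform metric on compacts, hence continuous in $J_1$), and $\mathbf Q_n(t) = n^{-1/3}Q_n(tn^{2/3}) = n^{-1/3}\phi(N_n)(tn^{2/3}) = \phi\bigl(n^{-1/3}N_n(\cdot\, n^{2/3})\bigr)(t)$ by the scale-invariance of reflection, the continuous mapping theorem gives $\mathbf Q_n(\cdot)\dconv \phi(W)(\cdot)$, which is \eqref{eq:main_theorem_delta_G_1_conclusion}. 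The main obstacle I anticipate is not the FCLT machinery itself but step (iii): controlling, uniformly over $j$ up to time $Tn^{2/3}$, the error incurred by replacing the empirical quantities $\frac1n\sum_{i\notin\nu_j}S_i^\alpha$ and the law of $S_{c(j)}$ by their deterministic counterparts $\E[S^\alpha]$ and $F_{\sss S}$ — one must show the size-biased selection at the (rare) empty-queue epochs and the depletion of the pool do not accumulate into an error larger than $\oP(n^{1/3})$ in the compensator, and this is where the moment assumption $\E[S^{2+\alpha}]<\infty$ and a careful coupling of the pool to an i.i.d.\ sequence (as in \cite{bet2014heavy}) will be essential.
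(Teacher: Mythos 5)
Your overall architecture---Doob decomposition of $N_n(\cdot)$ into compensator plus martingale, a martingale FCLT for the fluctuation part, direct computation of the compensator, and the continuous mapping theorem applied to the reflection map---is the same as the paper's. However, your treatment of the size-biasing contains a genuine error that would derail the computation of both the drift and the variance. You assert that the size-biasing ``matters only at regeneration epochs'' and that $\E[S_{c(j)}^{1+\alpha}]\approx\E[S^{1+\alpha}]$ away from those epochs. This is false: since $T_i\sim\Exp(\lambda S_i^{\alpha})$, the order $c(1),c(2),\ldots$ in which customers appear is the $\alpha$-size-biased reordering of the service times at \emph{every} step, not only when the queue empties (Lemma~\ref{lem:size_biased_reordering}); big jobs arrive early, which is the whole point of the model. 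Consequently $\ES[S_{c(j)}^{1+\gamma}\mid\mathcal F_{j-1}]\to\E[S^{1+\gamma+\alpha}]/\E[S^{\alpha}]$ uniformly for $j=\OP(n^{2/3})$ (Lemma~\ref{lem:1+alphaConditionalMoment}), and it is exactly this that produces the quadratic drift coefficient: the depletion term in the compensator is $-c_{n,\beta}\frac{\lambda}{n}\sum_{j<i}S_{c(j)}^{1+\alpha}$, each summand contributing $\E[S^{1+2\alpha}]/\E[S^{\alpha}]$ on average rather than $\E[S^{1+\alpha}]$. The same size-biased second moment $\ES[S_{c(i)}^{2}\mid\mathcal F_{i-1}]\to\E[S^{2+\alpha}]/\E[S^{\alpha}]$ is what yields $\sigma^2=\lambda^2\E[S^{\alpha}]\E[S^{2+\alpha}]$ in the quadratic variation. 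With your approximation the constants come out wrong, and the unresolved ``$?$'' in your displayed drift expansion is a symptom of this. Establishing these size-biased conditional-moment asymptotics uniformly in $j=O(n^{2/3})$ under only $\E[S^{2+\alpha}]<\infty$ is the technical heart of the paper (its Lemmas on the size-biased reordering, the uniform moment bounds, and the uniform convergence of $n^{-2/3}\sup_{j\le tn^{2/3}}|\sum_{i\le j}(S_{c(i)}^{1+\alpha}-\E[S^{1+2\alpha}]/\E[S^{\alpha}])|$), not a boundary effect concentrated at rare empty-queue epochs.

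A second, smaller gap: you dismiss the $Q_n(j-1)$-dependent part of the compensator (coming from $|\nu_j|=j+Q_n(j-1)$) as ``of lower order'' without justification. The difficulty is circular: to show that $n^{-1/3}\sum_{i\le tn^{2/3}}\ES[S_{c(i)}\mid\mathcal F_{i-1}]\sum_{l=i+1}^{i+1+Q_n(i-1)}S_{c(l)}^{\alpha}/n$ vanishes, you need an a priori bound $\sup_{i\le tn^{2/3}}Q_n(i)=\OP(n^{1/3})$, which is essentially part of what you are trying to prove. The paper resolves this with a bootstrap: it introduces a coupled upper-bounding process $N_n^{\sss U}$ in which served customers are never removed from the pool, so that its increments dominate those of $N_n$ pathwise and its compensator does not involve $Q_n$; proving the FCLT for $N_n^{\sss U}$ first yields tightness of $n^{-1/3}\sup_{t\le\bar t}Q_n(tn^{2/3})$, which then controls the extra term in the compensator of $N_n$. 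Without some such device (or a localization/stopping-time argument, which you do not supply) your step (ii) does not go through.
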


By the Continuous-Mapping Theorem and Theorem \ref{th:first_busy_period_convergence} we have the following:
\begin{theorem}[Number of customers served in the first busy period]\label{th:first_busy_period_convergence}%
Assume that $\alpha\in[0,1]$, $\E[S^{2+\alpha}]<\infty$ and that the heavy-traffic condition \eqref{eq:crit} holds. Assume further that $\mathbf Q_n(0) = q $. Then, as $n\rightarrow\infty$, the number of customers served in the first busy period $\mathrm{BP}_n:=H_{\mathbf{Q}_n}(0)$ converges to
\begin{equation}%
\mathrm{BP}_n \dconv H_{\phi(W)}(0),
\end{equation}%
where $W(\cdot)$ is given in \eqref{eq:main_theorem_delta_G_1_diffusion_definition}.
\end{theorem}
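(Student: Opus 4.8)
The plan is to express the rescaled first busy period as a first-passage time of the \emph{un-reflected} embedded walk, and then to apply the Continuous-Mapping Theorem to the scaling limit of that walk, provided by (the proof of) Theorem~\ref{th:main_theorem_delta_G_1}. Recall from \eqref{eq:queue_length_process_reflected}--\eqref{eq:pre_reflection_queue_length_process} that $Q_n=\phi(N_n)$, and put $\mathbf N_n(t):=n^{-1/3}N_n(\lfloor tn^{2/3}\rfloor)$, so that $\mathbf Q_n=\phi(\mathbf N_n)$ with $\mathbf Q_n$ as in \eqref{def:rescaled_Q}. Assume first that $q>0$, so that $N_n(0)=Q_n(0)\geq 1$ for $n$ large. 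Since $N_n$ changes by integer steps that are at least $-1$, it attains the level $0$ before becoming negative, and a short computation with the discrete reflection map \eqref{eq:queue_length_process_reflected} gives
\begin{equation}
H_{Q_n}(0)=\inf\{k\geq 0:Q_n(k)=0\}=\inf\{k\geq 0:N_n(k)\leq 0\},
\end{equation}
so that, after rescaling, $\mathrm{BP}_n=g(\mathbf N_n)$, where $g(x):=\inf\{t\geq 0:x(t)\leq 0\}$ is the first-passage functional on $\mathcal D$. The same (and easier) computation for the continuous path $W$, which starts from $q\geq 0$, gives $H_{\phi(W)}(0)=g(W)$. The proof of Theorem~\ref{th:main_theorem_delta_G_1} proceeds by first establishing $\mathbf N_n\dconv W$ in $(\mathcal D,J_1)$ and then applying the continuous map $\phi$; using the former, the claim reduces to showing $g(\mathbf N_n)\dconv g(W)$, that is, to the almost sure continuity of $g$ at $W$.

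For this I would use the standard continuity criterion for first-passage functionals: if $x_k\to x$ in $J_1$ with $x$ continuous, $x(0)>0$, $\tau:=g(x)<\infty$, and $\inf_{t\in[\tau,\tau+\varepsilon]}x(t)<0$ for every $\varepsilon>0$, then $g(x_k)\to g(x)$. This is a two-sided sandwich that exploits the fact that $J_1$-convergence to a continuous limit is uniform on compact time intervals: on $[0,\tau-\varepsilon]$ the limit $x$ is bounded away from $0$, which forces $g(x_k)\geq\tau-\varepsilon$ for $k$ large, while $x$ is strictly negative at some point of $(\tau,\tau+\varepsilon)$, which forces $g(x_k)\leq\tau+\varepsilon$ for $k$ large.

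It then remains to check that $W$ almost surely satisfies these three hypotheses, and this is the step I expect to require the most care. Almost sure finiteness $g(W)<\infty$ is immediate because the term $-\lambda\tfrac{\E[S^{1+2\alpha}]}{2\E[S^{\alpha}]}t^{2}$ in \eqref{eq:main_theorem_delta_G_1_diffusion_definition} forces $W(t)\to-\infty$, and $W(0)=q>0$ by assumption. The essential property is transversality: with $\tau:=g(W)$ one needs that a.s.\ $\inf_{t\in[\tau,\tau+\varepsilon]}W(t)<0$ for every $\varepsilon>0$, i.e.\ that $W$ does not merely touch the level $0$ at $\tau$ but strictly crosses it. Here the Brownian component does the work: $\tau$ is a stopping time for $B$, and
\begin{equation}
W(\tau+s)-W(\tau)=\int_0^s\Big(\beta-\lambda\tfrac{\E[S^{1+2\alpha}]}{\E[S^{\alpha}]}(\tau+u)\Big)\mathd{u}+\sigma\big(B(\tau+s)-B(\tau)\big),
\end{equation}
so by the strong Markov property this increment process is a Brownian motion with bounded continuous drift started from $0$; the $\sigma B$ part dominates the $O(s)$ drift near $s=0$ and, by the oscillation of Brownian motion (for instance via the law of the iterated logarithm), takes strictly negative values in every right-neighbourhood of $0$, which yields transversality. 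The Continuous-Mapping Theorem then gives $\mathrm{BP}_n=g(\mathbf N_n)\dconv g(W)=H_{\phi(W)}(0)$. Finally, the boundary case $q=0$ is degenerate: there $g(W)=0$, and a separate, easier estimate shows that the initial head start is too small to create a macroscopic busy period, so that $\mathrm{BP}_n\Pconv 0$.
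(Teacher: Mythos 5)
Your proposal is correct and follows the same route as the paper: the paper proves Theorem~\ref{th:first_busy_period_convergence} simply by applying the Continuous-Mapping Theorem to the scaling limit of Theorem~\ref{th:main_theorem_delta_G_1}, which is exactly your strategy. You merely make explicit the details the paper leaves implicit (reduction to the first-passage functional of the unreflected walk, a.s.\ continuity of that functional at $W$ via transversality of the zero crossing), and these verifications are sound.
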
%
}
In particular, if $\vert \mathcal F_n \vert$ denotes the number of vertices in the forest constructed from the $\DGa$ queue in Remark \ref{rem:directed_random_tree_model}, as $n\rightarrow\infty$,
\begin{equation}%
\vert \mathcal F_n \vert \dconv H_{\phi(W)}(0).
\end{equation}%
{

Theorem \ref{th:main_theorem_delta_G_1} implies that the typical queue length for the $\DGa$ system in heavy traffic is $\OP(n^{1/3})$, and that the typical busy period consists of $\OP(n^{2/3})$ services. The linear drift $t\rightarrow\beta \lambda t$ describes the position of the system inside the critical window. For $\beta>0$ the system is initially overloaded and the process $W(\cdot)$ is more likely to cause a large initial excursion. For $\beta<0$ the traffic intensity approaches $1$ from below, so that the system is initially stable. Consequently, the process $W(\cdot)$ has a strong initial negative drift, so that $\phi(W)(\cdot)$ is close to zero also for small $t$. Finally, the negative quadratic drift $t\rightarrow - \lambda\frac{\E[S^{1+2\alpha}]}{2\E[S^{\alpha}]} t^2$ captures the \emph{depletion-of-points effect}. Indeed, for large times, the process $W(t)$ is dominated by $- \lambda\frac{\E[S^{1+2\alpha}]}{2\E[S^{\alpha}]} t^2$, so that $\phi(W)(t)$ performs only small excursions away from zero.  See Figure \ref{fig:stable_motion_different_linear_drift_examples}.
\begin{figure}[hbt]
\centering
\includestandalone[mode=image]{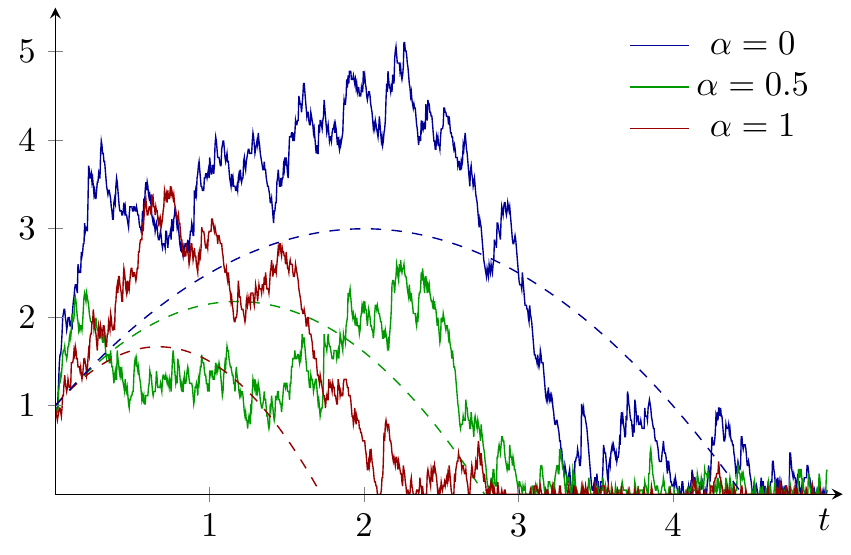}
\caption{Sample paths of the process $\mathbf Q_n(\cdot)$  for various values of $\alpha$ and $n= 10^4$. The service times are taken unit-mean exponential. The dashed curves represent the drift  $t\mapsto q+  \beta t - \lambda \E[S^{1+2\alpha}]/(2\E[S^{\alpha}]) t^2$. In all plots, $q = 1$, $\beta=1$, $\lambda=1/\E[S^{1+\alpha}]$.}	
\label{fig:stable_motion_different_linear_drift_examples}
\end{figure}

Let us now compare Theorem \ref{th:main_theorem_delta_G_1} with two known results.
For $\alpha = 0$, the limit diffusion simplifies to
\begin{equation}\label{eq:limit_diffusion_alpha_0}
W(t) = \beta t- \frac{1}{2} t^2 + \sigma B(t),
\end{equation}
with $\sigma ^2 = \lambda^2 \E[S^2]$, in agreement with \cite[Theorem 5]{bet2014heavy}. In \cite{bhamidi2010scaling} it  is shown that, when $(\mathcal W_i)_{i\in[n]}$ are i.i.d.~and further assuming that $\E[\mathcal W^2]/\E[\mathcal W] = 1$, the exploration process of the corresponding inhomogeneous random graph converges to
\begin{align}%
\overline{W}(t) &= \beta t - \frac{\E[\mathcal W^3]}{2\E[\mathcal W^2]^2}t^2 + {\frac{\sqrt{\E[\mathcal W]\E[\mathcal W^3]}}{\E[\mathcal W^2]}} B(t).
\end{align}%
For $\alpha = 1$, \eqref{eq:main_theorem_delta_G_1_diffusion_definition} can be rewritten using \eqref{eq:crit} as
\begin{equation}\label{eq:limit_diffusion_alpha_1}%
W(t) = \beta t - \frac{\E[S^3]}{2\E[S^2]^2} t^2 + {\frac{\sqrt{\E[S]\E[S^3]}}{\E[S^2]}}B(t).
\end{equation}%
Therefore the two processes coincide if $\mathcal W_i = S_i$, as expected.
\subsection{Numerical results}
We now use Theorem \ref{th:first_busy_period_convergence} to obtain numerical results for the first busy period. We shall also use the explicit expression of the probability density function of the first passage time of zero of $\phi(W)$ obtained by Martin-L\"of \cite{martin1998final}, see also \cite{hofstad2010critical}. Let Ai$(x)$ and Bi$(x)$ denote the classical Airy functions (see \cite{abramowitz1964handbook}). The first passage time of zero of $W(t) = q + \beta t -1/2t^2 + \sigma B(t)$ has probability density \cite{martin1998final}
\begin{equation}\label{eq:first_busy_period_density}%
f(t;\beta,\sigma) = \e^{-((t-\beta)^3+\beta^3)/6\sigma^2-\beta a}\int_{-\infty}^{+\infty}\e^{tu}\frac{\mathrm{Bi}(cu)\mathrm{Ai}(c(u-a))-\mathrm{Ai}(cu)\mathrm{Bi}(c(u-a))}{\pi(\mathrm{Ai}(cu)^2 + \mathrm{Bi}(cu)^2)}\mathrm{d} u,
\end{equation}%
where $c = (2\sigma^2)^{1/3}$ and $a=q/\sigma^2>0$. The result \eqref{eq:first_busy_period_density} can be extended to a diffusion with a general quadratic drift through the scaling relation $W(\tau^2 t) = \tau(q/\tau + \beta \tau t - \tau^3 t^2/2 + \sigma B(t))$.
\begin{figure}[!hbt]
	\centering
	\includestandalone[mode=image|tex]{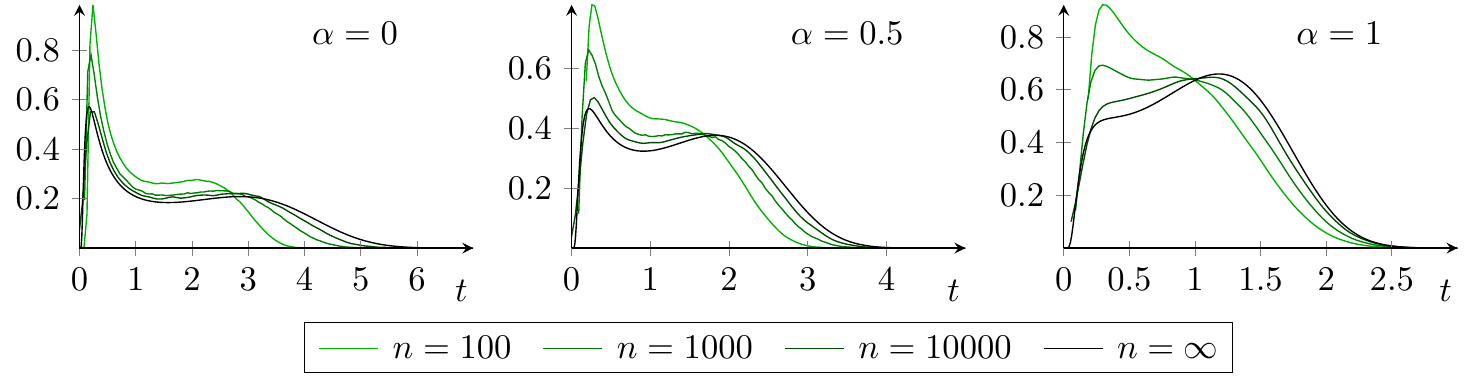}
	\caption{Density plot (black) and Gaussian kernel density estimates (colored) obtained by running $10^6$ simulations of a $\Delta_{(i)}^{\alpha}/G/1$ queue with $n=\,100,\,1000,\,10000$ customers and $\alpha = 0,~1/2,~1$. In all cases, the service times are exponentially distributed and $q=\beta=\E[S]=1$.}			
	\label{fig:distribution_first_busy_period}
\end{figure}
Figure \ref{fig:distribution_first_busy_period} shows the empirical density of $\mathrm{BP}_n$, for increasing values of $n$ and various values of $\alpha$, together with the exact limiting value \eqref{eq:first_busy_period_density}. 

\begin{table}[!htbp]
\centering
\begin{tabular}{ c  c c c c c c c c c}
 & \multicolumn{3}{ c }{Deterministic} & \multicolumn{3}{ c }{Exponential}& \multicolumn{3}{ c }{Hyperexponential}\\
%
$\alpha$ & 0 & 1/2 & 1 & 0 & 1/2 & 1 & 0 & 1/2 & 1\\
%
\hline
$n$ \\
\multicolumn{1}{r}{$10^1$} & 1.1318	& 1.1318	& 1.1318	& 1.0359 	& 0.8980 & 0.7429	& 0.8920 &	0.6356 & 0.5332	\\
\multicolumn{1}{r}{$10^2$} & 1.5842	& 	1.5842 & 1.5842	& 1.3584	& 1.0924 & 0.8333	& 1.0959	&	0.7454 & 0.5525 \\
\multicolumn{1}{r}{$10^3$} &	1.9188	& 	1.9188 & 1.9188	& 1.6387	& 1.2506	& 0.9284	& 1.2936	&	0.8352 &	0.6134	\\
\multicolumn{1}{r}{$10^4$} &	2.1474	& 	2.1474 & 2.1474	&	1.8419 & 1.3925	& 	1.0014 & 1.4960	&	0.9210 &	0.6554	\\
%
%
\multicolumn{1}{c}{$\infty$~~} &	2.3374	& 	2.3374    & 2.3374 &	2.0038 & 1.4719	& 1.0440	& 1.6242	&	0.9717 &	0.6881
\end{tabular}
\caption{Numerical values of $n^{-2/3}\E[\mathrm{BP}_n]$ for different population sizes and the exact expression for $n=\infty$ computed using \eqref{eq:first_busy_period_density}. The service requirements are displayed in order of increasing coefficient of variation. In all cases $q = \beta = \E[S] = 1$. The hyperexponential service times follow a rate $\lambda_1 = 0.501$ exponential distribution with probability $p_1=1/2$ and a rate $\lambda_2 = 250.5$ exponential distribution with probability $p_2=1-p_1=1/2$. Each value for finite $n$ is the average of $10^4$ simulations. }\label{tab:convergence_expectation_busy_period}
\end{table}
Table \ref{tab:convergence_expectation_busy_period} shows the mean busy period for different choices of $\alpha$ and different service time distributions. We computed the exact value for $n=\infty$ by numerically integrating \eqref{eq:first_busy_period_density}.
Observe that $\E[\mathrm{BP}_n]$ decreases with $\alpha$. This might seem counterintuitive, because the larger $\alpha$, the more likely customers with larger service join the queue early, who in turn might initiate a large busy period. Let us explain this apparent contradiction. When the arrival rate $\lambda$ is fixed, assumption \eqref{eq:crit} does not necessarily hold and $\E[\mathrm{BP}_n]$ increases with $\alpha$, as can be seen in Table \ref{tab:fixed_lambda_mean_busy_period}.
\begin{table}[!b]%
\centering
\begin{tabular}{c c c c c c}
&\multicolumn{5}{c}{ Exponential }\\
$\alpha$ & 0 & 1/4 & 1/2 & 3/4 & 1 \\
\hline
$n$ \\
$10^1$ & 1.0854 & 1.0922 & 1.1053 & 1.1118 & 1.1306 \\
$10^2$ & 5.9515 & 8.1928 & 11.4478 & 16.3598 & 22.0381 \\
\end{tabular}
\caption{Expected number of customers served in the first busy period of the \emph{nonscaled} $\Delta_{(i)}^{\alpha}/G/1$ queue with mean one exponential service times and arrival rate $\lambda = 0.01$. In all cases $q=1$. Each value is the average of $10^4$ simulations.}\label{tab:fixed_lambda_mean_busy_period}
\end{table}%
However, our heavy-traffic condition \eqref{eq:crit} implies that $\lambda$ depends on $\alpha$ since $\lambda = 1/\E[S^{1+\alpha}]$. The interpretation of condition \eqref{eq:crit} is that, on average, one customer joins the queue during one service time. Notice that, due to the size-biasing, the average service time is not $\E[S]$. Therefore, the number of customers that join during a (long) service is roughly equal to one as $\alpha\uparrow 1$. However, when customers with large services leave the system, they are not able to join any more. As $\alpha\uparrow 1$, customers with large services leave the system earlier. Therefore, as $\alpha\uparrow1$, the resulting second order \emph{depletion-of-points effect} causes shorter excursions as time progresses, see also Figure \ref{fig:stable_motion_different_linear_drift_examples}. In the limit process, this phenomenon is represented by the fact that the coefficient of the negative quadratic drift increases as $\alpha\uparrow1$, as shown in the following lemma.
{
\begin{lemma}%
Let
\begin{equation}\label{eq:coefficient_negative_quadratic_drift}%
\alpha\mapsto f(\alpha) := \frac{\E[S^{1+2\alpha}]}{\E[S^{\alpha}]\E[S^{1+\alpha}]}.
\end{equation}%
Then $f'(\alpha)\geq0$.
\end{lemma}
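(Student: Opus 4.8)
The plan is to pass to logarithms and differentiate. Put $m(s):=\E[S^s]=\E[\mathrm e^{s\log S}]$, the moment generating function of $\log S$; by the standing assumption $\E[S^{2+\alpha}]<\infty$ together with the elementary bound $1+2\alpha\le 2+\alpha$ (valid for $\alpha\in[0,1]$), all of $m(\alpha),m(1+\alpha),m(1+2\alpha)$ are finite and strictly positive, so $f(\alpha)=m(1+2\alpha)/(m(\alpha)m(1+\alpha))>0$ and it suffices to show $(\log f)'(\alpha)\ge0$. Since $\log f(\alpha)=\log m(1+2\alpha)-\log m(\alpha)-\log m(1+\alpha)$, the chain rule gives
\[
\frac{f'(\alpha)}{f(\alpha)}=2\,g(1+2\alpha)-g(\alpha)-g(1+\alpha),\qquad g(s):=\frac{m'(s)}{m(s)}=\frac{\E[S^s\log S]}{\E[S^s]}.
\]

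The crux is that $s\mapsto g(s)$ is nondecreasing on $[0,\infty)$, which is exactly convexity of $s\mapsto\log m(s)$. This convexity is immediate from H\"older's inequality: for $s_1,s_2$ in the domain and $\theta\in(0,1)$, $m(\theta s_1+(1-\theta)s_2)\le m(s_1)^{\theta}m(s_2)^{1-\theta}$. (Equivalently, letting $\mu_s$ be the law obtained by size-biasing $S$ with $S^s$, one has $g(s)=\E_{\mu_s}[\log S]$ and $g'(s)=\Var_{\mu_s}(\log S)\ge0$.) Granting monotonicity of $g$, the conclusion follows at once: $\alpha\ge0$ gives $1+2\alpha\ge 1+\alpha$ and $1+\alpha\ge0$ gives $1+2\alpha\ge\alpha$, so $g(1+2\alpha)\ge g(1+\alpha)$ and $g(1+2\alpha)\ge g(\alpha)$; adding these two inequalities yields $2g(1+2\alpha)\ge g(\alpha)+g(1+\alpha)$, i.e.\ $f'(\alpha)\ge0$.

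The only point requiring care---and the one I expect to be the main obstacle---is the justification of differentiating $m$ under the expectation, which needs $\E[S^s|\log S|]<\infty$ (and, for the variance identity, $\E[S^s(\log S)^2]<\infty$) for $s$ near $\{\alpha,1+\alpha,1+2\alpha\}$. Using $|\log x|^k=O(x^{\varepsilon}+x^{-\varepsilon})$ as $x\to\infty$ and $x\to0$, these bounds follow from $\E[S^{2+\alpha}]<\infty$ as long as $1+2\alpha<2+\alpha$, i.e.\ for $\alpha\in[0,1)$; the endpoint $\alpha=1$ is then recovered from continuity of $f$ and $f'\ge0$ on $[0,1)$. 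One can sidestep differentiability altogether by running the same comparison on difference quotients: for $0\le\alpha_1<\alpha_2\le1$, log-convexity of $m$ gives $\log f(\alpha_2)-\log f(\alpha_1)\ge0$ because a convex function has nondecreasing slopes. In either form, the structural inequality $2g(1+2\alpha)\ge g(\alpha)+g(1+\alpha)$ is the whole content and is essentially forced by log-convexity of moments.
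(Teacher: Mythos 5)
Your proof is correct, and it takes a genuinely different route from the paper's. The paper differentiates $f$ directly, reduces $f'(\alpha)\ge 0$ to two product inequalities of the form $\E[\log(S)S^{1+2\alpha}]\E[S^{\alpha}]\ge\E[S^{1+2\alpha}]\E[\log(S)S^{\alpha}]$ (and its analogue with $S^{1+\alpha}$ in place of $S^{\alpha}$), and proves each by passing to a size-biased variable $W$ and invoking Chebyshev's association inequality $\E[g(W)h(W)]\ge\E[g(W)]\E[h(W)]$ for increasing $g,h$. You instead take logarithms, observe that $s\mapsto\log\E[S^s]$ is convex (H\"older), and conclude from monotonicity of $g(s)=\E[S^s\log S]/\E[S^s]$ via $2g(1+2\alpha)\ge g(1+\alpha)+g(\alpha)$. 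The two arguments are cousins --- the paper's correlation inequality under the $S^{\alpha}$-tilted measure and your $\Var_{\mu_s}(\log S)\ge 0$ are both instances of positive correlation of increasing functions of $S$ --- but the decomposition is different, and yours buys two things: a derivative-free variant (log-convexity gives nondecreasing slopes, hence monotonicity of $f$ directly on difference quotients, sidestepping differentiation under the expectation entirely), and a more careful treatment of the integrability needed to justify $f'$ at all, including the endpoint $\alpha=1$, which the paper glosses over (it implicitly assumes $\E[S^{1+2\alpha}\log S]<\infty$, which does not quite follow from $\E[S^{2+\alpha}]<\infty$ when $\alpha=1$). The paper's version is slightly more elementary in that it never mentions convexity or H\"older, but yours is cleaner and generalizes more readily.
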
%
}
\begin{proof}%
Since
\begin{equation}%
f'(\alpha) = \frac{2\E[\log(S)S^{1+2\alpha}]}{\E[S^{\alpha}]\E[S^{1+\alpha}]} - \frac{\E[S^{1+2\alpha}]\E[\log(S)S^{\alpha}]}{\E[S^{\alpha}]^2\E[S^{1+\alpha}]} - \frac{\E[S^{1+2\alpha}]\E[\log(S)S^{1+\alpha}]}{\E[S^{\alpha}]\E[S^{1+\alpha}]^2},
\end{equation}%
$f'(\alpha)\geq 0$ if and only if
\begin{align}%
2 \E[\log(S)S^{1+2\alpha}]\E[S^{\alpha}]\E[S^{1+\alpha}] &\geq\E[S^{1+\alpha}]\E[S^{1+2\alpha}]\E[\log(S)S^{\alpha}] \nnl
&\quad+ \E[S^{\alpha}]\E[S^{1+2\alpha}]\E[\log(S)S^{1+\alpha}].
\end{align}%
We split the left-hand side in two identical terms and show that each of them dominates one term on the right-hand side. That is
\begin{equation}\label{eq:coefficient_negative_drift_proof_first_term}%
\E[\log(S)S^{1+2\alpha}]\E[S^{\alpha}]\E[S^{1+\alpha}]  \geq \E[S^{1+\alpha}]\E[S^{1+2\alpha}]\E[\log(S)S^{\alpha}],
\end{equation}%
the proof of the second bound being analogous. The inequality \eqref{eq:coefficient_negative_drift_proof_first_term} is equivalent to
\begin{equation}\label{eq:coefficient_negative_drift_proof_first_term_rewritten}%
\frac{\E[(\log(S)S^{1+\alpha})S^{\alpha}]}{\E[S^{\alpha}]} \geq \frac{\E[S^{1+\alpha}S^{\alpha}]}{\E[S^{\alpha}]}\frac{\E[\log(S)S^{\alpha}]}{\E[S^{\alpha}]}.
\end{equation}%
The term on the left and the two terms on the right can be rewritten as the expectation of a size-biased random variable $W$, so that \eqref{eq:coefficient_negative_drift_proof_first_term_rewritten} is equivalent to
\begin{equation}\label{eq:coefficient_negative_drift_proof_final}%
\E[\log(W)W^{1+\alpha}] \geq \E[\log(W)]\E[W^{1+\alpha}] .
\end{equation}%
Finally, the inequality \eqref{eq:coefficient_negative_drift_proof_final} holds because $W$ is positive with probability one and $x\mapsto\log(x)$ and $x\mapsto x^{1+\alpha}$  are increasing functions.
\end{proof}%

\section{Overview of the proof of the scaling limit}\label{sec:overview_proof}
The proof of Theorem \ref{th:main_theorem_delta_G_1} extends the techniques we developed in \cite{bet2014heavy}.  However, the dependency structure of the arrival times complicates the analysis considerably. Customers with larger job sizes have a higher probability of joining the queue quickly, and this gives rise to a
size-biasing reordering of the service times. In the next section we study this phenomenon in detail.

\subsection{Preliminaries} \label{sec:preliminaries}

Given two sequences of random variables $(X_n)_{n\geq1}$ and $(Y_n)_{n\geq1}$, we say that $X_n$ converges in probability to $X$, and we denote it by $X_n\Pconv X$, if $\mathbb P (\vert X_n - X\vert >\varepsilon) \rightarrow0$  as $n\rightarrow0$ for each $\varepsilon>0$. We also write $X_n = \oP(Y_n)$ if $X_n/Y_n \sr{\mathbb P}{\rightarrow}0$ and $X_n=\OP(Y_n)$ if $(X_n/Y_n)_{n\geq1}$ is tight. Given two real-valued random variables $X$, $Y$ we say that $X$ \emph{stochastically dominates} $Y$ and denote it by $Y\preceq X$, if $\mathbb{P}(X\leq x) \leq \mathbb P(Y\leq x)$ for all $x\in\mathbb R$.

For our results, we condition on the entire sequence $(S_i)_{i\geq1}$. More precisely, if the random variables that we consider are defined on the probability space $(\Omega, {\mathcal F}, {\mathbb P})$, then we define a new probability space $(\Omega, \mathcal F_{\sss S}, \PS)$, with $ \PS(A) := {\mathbb P}(A\vert(S_i)_{i=1}^{\infty})$ and $\mathcal F_{\sss S} := \sigma(\{{\mathcal F} , (S_i)_{i=1}^{\infty}\})$, the $\sigma$-algebra generated by $\mathcal F$ and $(S_i)_{i=1}^{\infty}$. Correspondingly, for any random variable $X$ on $\Omega$ we define $\ES[X]$ as the expectation with respect to $\PS$, and $\E[X]$ for the expectation with respect to $\mathbb P$. We say that a sequence of events $(\mathcal E_n)_{n\geq1}$ holds with high probability (briefly, w.h.p.) if $\mathbb P (\mathcal E_n) \rightarrow1$ as $n\rightarrow\infty$.

First, we recall a well-known result that will be useful on several occasions.
\begin{lemma}\label{lem:max_iid_random_variables}
Assume $(X_i)_{i=1}^n$ is a sequence of positive i.i.d.~random variables such that $\E[X_i] <\infty$. Then $\max_{i\in[n]}X_i = \oP(n)$.
\end{lemma}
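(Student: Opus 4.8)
The plan is to prove the classical fact that the maximum of $n$ i.i.d.\ nonnegative integrable random variables is $\oP(n)$, i.e.\ that $\max_{i\in[n]}X_i/n\Pconv 0$. First I would fix $\varepsilon>0$ and estimate the tail probability directly via a union bound:
\begin{equation}
\mathbb P\Big(\max_{i\in[n]}X_i > \varepsilon n\Big) \leq \sum_{i=1}^n \mathbb P(X_i > \varepsilon n) = n\,\mathbb P(X_1 > \varepsilon n).
\end{equation}
So it suffices to show that $n\,\mathbb P(X_1 > \varepsilon n)\to 0$ as $n\to\infty$, for every fixed $\varepsilon>0$.

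The key step is to deduce $n\,\mathbb P(X_1>\varepsilon n)\to 0$ from the assumption $\E[X_1]<\infty$. This follows from the standard tail-sum identity: since $X_1\geq 0$,
\begin{equation}
\E[X_1] = \int_0^\infty \mathbb P(X_1 > t)\,\mathrm{d}t \geq \sum_{k=0}^{\infty} \varepsilon\,\mathbb P(X_1 > \varepsilon(k+1)),
\end{equation}
so the series $\sum_{k\geq 0}\mathbb P(X_1>\varepsilon k)$ converges, and hence its general term tends to zero: $\mathbb P(X_1>\varepsilon n)\to 0$ faster than $1/n$ along integers. More precisely, convergence of $\sum_k \mathbb P(X_1>\varepsilon k)$ forces the tail sums $\sum_{k\geq n}\mathbb P(X_1>\varepsilon k)\to 0$, and by monotonicity of $k\mapsto\mathbb P(X_1>\varepsilon k)$ we get $n\,\mathbb P(X_1>\varepsilon n)\le 2\sum_{k\ge n/2}\mathbb P(X_1>\varepsilon k)\to 0$. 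Combining with the union bound gives $\mathbb P(\max_{i\in[n]}X_i>\varepsilon n)\to 0$, which is exactly $\max_{i\in[n]}X_i=\oP(n)$.

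There is no real obstacle here; the only mild subtlety is making the passage from ``the general term of a convergent series tends to $0$'' to the quantitative statement ``$n\,\mathbb P(X_1>\varepsilon n)\to 0$'', which is handled by the tail-sum bound above (equivalently, one may invoke that $X_1$ integrable implies $n\mathbb P(X_1>n)\to 0$ by dominated convergence applied to $n\mathbf 1_{\{X_1>n\}}\le X_1$). One should also note the result holds for each fixed $\varepsilon$, which is all that is needed for the $\oP(n)$ conclusion, and that positivity of the $X_i$ is used only to write the tail-integral representation of the mean (the argument extends verbatim to integrable $X_i$ by considering $|X_i|$).
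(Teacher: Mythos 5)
Your proof is correct and follows the same skeleton as the paper's: both begin with the union bound $\mathbb P(\max_{i\in[n]}X_i>\varepsilon n)\leq n\,\mathbb P(X_1>\varepsilon n)$ and then show that $n\,\mathbb P(X_1>\varepsilon n)\to 0$ using integrability. The only difference is in that last step: the paper uses the truncated Markov bound $\varepsilon n\,\mathbb P(X_1\geq\varepsilon n)\leq \E[X_1\mathds 1_{\{X_1\geq\varepsilon n\}}]$ and lets dominated convergence kill the right-hand side, whereas you extract the decay from the tail-sum representation $\E[X_1]=\int_0^\infty\mathbb P(X_1>t)\,\mathrm dt$, convergence of $\sum_k\mathbb P(X_1>\varepsilon k)$, and monotonicity of the tail. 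Both are standard and equally short; you in fact note the paper's variant yourself in your closing parenthetical, so the two arguments are interchangeable here. No gaps.
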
%
\begin{proof}%
We have the inclusion of events
\begin{equation}
\Big\{\max_{i\in[n]} X_i \geq \varepsilon n \Big\} \subseteq \bigcup_{i=1}^n\Big\{X_i\geq \varepsilon n\Big\}.
\end{equation}%
Therefore,
\begin{equation}
\mathbb P (\max_{i\in[n]} X_i \geq \varepsilon n) \leq \sum_{i=1}^n \mathbb P (X_i \geq \varepsilon n).
\end{equation}%
Since for any positive random variable $Y$, $\varepsilon \mathds 1_{\{Y\geq\varepsilon\}}\leq Y \mathds 1_{\{Y\geq\varepsilon\}} $ almost surely, it follows
\begin{equation}%
\mathbb P (\max_{i\in[n]} X_i \geq \varepsilon n) \leq \frac{\sum_{i=1}^{n}\E[X_i\mathds 1_{\{X_i\geq\varepsilon n\}}]}{\varepsilon n} = \frac{\E[X_1 \mathds 1_{\{X_1\geq\varepsilon n\}}]}{\varepsilon}.
\end{equation}%
The right-most term tends to zero as $n\rightarrow\infty$ since $\E[X_1]<\infty$, and this concludes the proof.
\end{proof}%
%
%
Given a vector $\bar x =(x_1,x_2,\ldots,x_n)$ with deterministic, real-valued entries, the size-biased ordering of $\bar x$ is a \emph{random} vector $X\!\begin{smallmatrix}(s)\\
~\end{smallmatrix} = (X\!\begin{smallmatrix}(s)\\
\mkern-12mu1\end{smallmatrix},X\!\begin{smallmatrix}(s)\\
\mkern-12mu2\end{smallmatrix},\ldots,X\!\begin{smallmatrix}(s)\\
\mkern-12mu n\end{smallmatrix})$ such that
\begin{equation}%
\mathbb P (X\!\begin{smallmatrix}(s)\\
\mkern-12mu1\end{smallmatrix} = x_j) = \frac{x_j}{\sum_{l=1}^n x_l},~  \mathbb P (x\!\begin{smallmatrix}(s)\\
\mkern-12mu2\end{smallmatrix} = x_j \mid X\!\begin{smallmatrix}(s)\\
\mkern-12mu1\end{smallmatrix}) = \frac{x_j}{\sum_{l=1}^n x_l - x\!\begin{smallmatrix}(s)\\
\mkern-12mu1\end{smallmatrix}},~\ldots
\end{equation}%
More generally, for any $\alpha\in\mathbb R$ the $\alpha$-size-biased ordering of $\bar x$ is given by a vector $\bar X\!\begin{smallmatrix}(\alpha)\\
~\end{smallmatrix} = (X\!\begin{smallmatrix}(\alpha)\\
\mkern-12mu1\end{smallmatrix}, X\!\begin{smallmatrix}(\alpha)\\
\mkern-12mu2\end{smallmatrix},\ldots, X\!\begin{smallmatrix}(\alpha)\\
\mkern-12mu~n\end{smallmatrix})$ such that
\begin{equation}%
\mathbb P (X\!\begin{smallmatrix}(\alpha)\\
\mkern-12mu1\end{smallmatrix} = x_j) = \frac{x^{\alpha}_j}{\sum_{l=1}^n x^{\alpha}_l},~\mathbb P (X\!\begin{smallmatrix}(\alpha)\\
\mkern-12mu2\end{smallmatrix} = x_j \mid X\!\begin{smallmatrix}(\alpha)\\
\mkern-12mu1\end{smallmatrix} = x_i) = \frac{x^{\alpha}_j}{\sum_{l=1}^n x^{\alpha}_l - x^{\alpha}_i},~\ldots
\end{equation}%
Finally, we define
\begin{equation}%
\served{k} = \{c(1), \ldots, c(k)\}
\end{equation}%
as the set of the first $k$ customers served. The following lemma is the first step in understanding the structure of the arrival process:
\begin{lemma}[Size-biased reordering of the arrivals]\label{lem:size_biased_reordering}%
The order of appearance of customers is the $\alpha$-size-biased ordering of their service times. In other words,
\begin{equation}%
\PS(c(j) = i \mid \served{j-1}) = \frac{S_i^{\alpha}}{\sum_{l\nin\served{j-1}}S^{\alpha}_l}.
\end{equation}%
\end{lemma}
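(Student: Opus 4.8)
The plan is to reduce the statement to the elementary ``competing exponentials'' property of independent exponential clocks. First I would use the memorylessness of the exponential distribution to replace the dynamic description of the arrival process in \eqref{eq:number_arrivals_during_one_service}--\eqref{eq:choice_of_customer_when_queue_is_empty} by the equivalent static one: draw once and for all, under $\PS$, independent clocks $T_i\sim\Exp(\lambda S_i^{\alpha})$, let customer $i$ ``arrive'' at absolute time $T_i$, and let the single server always serve (or, if the queue is empty, wait for and then serve) the not-yet-served customer whose clock is the next to ring. By memorylessness this construction has the same law as the queue of Section~\ref{sec:model_description}: during a busy period it matches \eqref{eq:number_arrivals_during_one_service} because the conditional probability that an as-yet-silent clock rings within a time window of length $\tilde S_{c(k)}$ is $\mathbb P(T_i\le\tilde S_{c(k)})$, and when the queue is empty the next clock to ring is customer $j$ with probability $\propto\lambda S_j^{\alpha}$, which is exactly \eqref{eq:choice_of_customer_when_queue_is_empty}. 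In this representation $\served{j-1}$ is precisely the set of the $j-1$ customers with the smallest clocks, and $c(j)$ is the customer in $[n]\setminus\served{j-1}$ whose clock rings first. The tool I would then invoke is: if $(E_l)_{l\in J}$ are independent with $E_l\sim\Exp(\rho_l)$, then the minimizing index equals $i$ with probability $\rho_i/\sum_{l\in J}\rho_l$, and conditionally on this index and on the value $t=\min_{l\in J}E_l$ the shifted variables $(E_l-t)_{l\in J\setminus\{i\}}$ are again independent with $E_l-t\sim\Exp(\rho_l)$; in particular the conditional law of the minimizing index does not depend on $t$. Here $\rho_i=\lambda S_i^{\alpha}$, so $\lambda$ cancels and $\rho_i/\sum_{l\in J}\rho_l=S_i^{\alpha}/\sum_{l\in J}S_l^{\alpha}$.

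With this in hand the core step is a conditioning computation. Fix $B\subseteq[n]$ with $\vert B\vert=j-1$ and condition on $\{\served{j-1}=B\}$. Since the service requirements $(S_l)$ are independent of the clocks $(T_l)$, and since in the static representation $\{\served{j-1}=B\}$ coincides with the clock event that the $j-1$ customers with smallest clocks are precisely those in $B$, it suffices to condition on this clock event. Refining it by the value $t$ of the $(j-1)$-th smallest clock turns it into $\{\max_{b\in B}T_b=t\}\cap\{\min_{l\nin B}T_l>t\}$, which is a product event in the independent families $(T_b)_{b\in B}$ and $(T_l)_{l\nin B}$; hence conditionally the clocks $(T_l)_{l\nin B}$ remain independent with $T_l\sim t+\Exp(\lambda S_l^{\alpha})$ by memorylessness. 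Applying the tool above to $(T_l-t)_{l\nin B}$ gives $\PS(c(j)=i\mid\served{j-1}=B,\ T_{c(j-1)}=t)=S_i^{\alpha}/\sum_{l\nin B}S_l^{\alpha}$, which is free of $t$, so integrating out $t$ yields the claimed identity. When the queue happens to be empty at the start of the $j$-th service this is literally the rule \eqref{eq:choice_of_customer_when_queue_is_empty}, and the argument shows it holds equally when $c(j)$ is the head of a nonempty queue.

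The step that needs the most care, and which I would flag as the main obstacle, is justifying that conditioning on the \emph{set} $\served{j-1}$ alone already pins down the conditional law of $c(j)$, even though the full history up to the start of the $j$-th service --- arrivals, service durations, queue lengths, and, when the queue is nonempty, the identity of $c(j)$ itself --- carries strictly more information. This is exactly where memorylessness is essential: one must argue that whatever routing the queue performed during the first $j-1$ services used only the service times, which are independent of all clocks, together with the information that the clocks of the not-yet-served customers had not yet rung, and that conditionally on the latter those clocks are again fresh independent exponentials. Once this reduction is made precise the remaining computation is the one-line competing-exponentials identity above.
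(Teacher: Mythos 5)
Your proposal is correct and follows essentially the same route as the paper: the paper's proof simply identifies $c(j)$ with the argmin of the remaining independent exponential clocks and invokes the competing-exponentials formula $\rho_i/\sum_{l}\rho_l$ with $\rho_l=\lambda S_l^{\alpha}$, which is exactly your core computation. Your write-up merely makes explicit the memorylessness/conditioning argument that the paper compresses into the phrase ``by basic properties of exponentials.''
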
%
\begin{proof}
Conditioned on $(S_l)_{l=1}^n$, the arrival times are independent exponential random variables. By basic properties of exponentials, we have
\begin{align}\label{eq:size_biased_reordering}%
\PS&(c(j) = i \mid \served{j-1}) = \PS(\min\{T_l:~l\nin \served{j-1}\} = T_i\mid \served{j-1}) = \frac{S_i^{\alpha}}{\sum_{l\nin\served{j-1}}S^{\alpha}_l},
\end{align}%
as desired.
\end{proof}
We remark that \eqref{eq:size_biased_reordering} differs from the classical size-biased reordering in that the weights are a \emph{non-linear} function of the $(S_i)_{i=1}^n$.
The next lemma is crucial, establishing stochastic domination between the service requirements of the customers in order of appearance. \bl{In our definition of the queueing process \eqref{eq:queue_length_process}--\eqref{eq:number_arrivals_during_one_service}, we do not keep track of the service requirements of the customers that join the queue, but only of their arrival times \eqref{eq:arrival_times}. Therefore, at the start of service, a customer's service requirement is a random variable that depends on the arrival time relative to the remaining customers. Lemma \ref{lem:size_biased_reordering} then gives the precise distribution of the service requirement of the $j$-th customer entering service.}

Recall that $X$ stochastically dominates $Y$ (with notation $Y\preceq X$) if and only if there exists a probability space $(\bar{\Omega},\bar{ \mathcal F}, \bar{\mathbb P})$ and two random variables $\bar{X}$, $\bar{Y}$ defined on $\bar{\Omega}$ such that $\bar{X}\sr{\mathrm d}{=} X$, $\bar{Y}\sr{\mathrm d}{=} Y$ and $\bar{\mathbb P}(\bar Y\leq \bar X) = 1$.
\begin{lemma}\label{lem:expectation_service_times_ordering}
Assume that $\alpha>0$. Let $f:\mathbb R^+\rightarrow\mathbb R$ be a function such that $\E[f(S)S^{\alpha}]<\infty$. Then there exists a constant $C_{f, \sss S}$ such that almost surely, for $n$ large enough,
\begin{equation}\label{eq:expectation_service_times_ordering}
\ES[f(S_{c(k)})]\leq C_{f,\sss S}<\infty,
\end{equation}
uniformly in $k\leq c n$, for a fixed $c\in(0,1)$.
\end{lemma}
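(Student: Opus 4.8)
The plan is to pass to the conditional expectation given $\served{k-1}$, for which Lemma \ref{lem:size_biased_reordering} supplies an exact formula, and then to bound the numerator from above and the denominator from below by quantities of order $n$. First I would reduce to the case $f\geq 0$ (otherwise replace $f$ by $f^{+}:=\max\{f,0\}$, which still satisfies the hypothesis). By Lemma \ref{lem:size_biased_reordering}, for $i\nin\served{k-1}$ one has $\PS(c(k)=i\mid\served{k-1})=S_i^{\alpha}/\sum_{l\nin\served{k-1}}S_l^{\alpha}$, hence
\begin{equation}
\ES[f(S_{c(k)})\mid\served{k-1}]=\frac{\sum_{i\nin\served{k-1}}f(S_i)S_i^{\alpha}}{\sum_{l\nin\served{k-1}}S_l^{\alpha}}.
\end{equation}
For the numerator I would drop the restriction $i\nin\served{k-1}$ (legitimate since $f\geq 0$) and apply the strong law of large numbers: $n^{-1}\sum_{i=1}^{n}f(S_i)S_i^{\alpha}\to\E[f(S)S^{\alpha}]<\infty$ almost surely, so that the numerator is at most $(\E[f(S)S^{\alpha}]+1)\,n$ for all $n$ large, almost surely.

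The heart of the matter — and the step I expect to be the main obstacle — is a lower bound on the denominator $\sum_{l\nin\served{k-1}}S_l^{\alpha}$ that is uniform over $k\leq cn$. Since the arrival order is $\alpha$-size-biased, the served set $\served{k-1}$ is biased towards the customers with the \emph{largest} service requirements, so one cannot naively assert that the unserved customers still carry a constant fraction of the total $S^{\alpha}$-mass. To circumvent this I would introduce a truncation: fix $0<\kappa<K<\infty$ with $\mathbb P(\kappa\leq S\leq K)>c$, which is possible since $S>0$ almost surely and $c<1$, so that $\mathbb P(\kappa\leq S\leq K)\uparrow 1$ as $\kappa\downarrow 0$, $K\uparrow\infty$. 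Writing $G_n:=\{l\in[n]:\kappa\leq S_l\leq K\}$, the strong law gives $|G_n|\geq c'n$ for all $n$ large (a.s.), for some fixed $c'\in(c,\mathbb P(\kappa\leq S\leq K))$. Because $|\served{k-1}|=k-1\leq cn$, at least $(c'-c)n$ elements of $G_n$ are not served, and each such $l$ contributes at least $\kappa^{\alpha}$ to the sum; therefore
\begin{equation}
\sum_{l\nin\served{k-1}}S_l^{\alpha}\ \geq\ \sum_{l\in G_n\setminus\served{k-1}}S_l^{\alpha}\ \geq\ \kappa^{\alpha}(c'-c)\,n ,
\end{equation}
uniformly in $k\leq cn$, almost surely for $n$ large.

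Combining the numerator bound with this estimate would give, almost surely for $n$ large and uniformly in $k\leq cn$,
\begin{equation}
\ES[f(S_{c(k)})\mid\served{k-1}]\ \leq\ \frac{(\E[f(S)S^{\alpha}]+1)\,n}{\kappa^{\alpha}(c'-c)\,n}\ =\ \frac{\E[f(S)S^{\alpha}]+1}{\kappa^{\alpha}(c'-c)}\ =:\ C_{f,\sss S}<\infty ,
\end{equation}
a bound that is non-random given $(S_i)_{i\geq1}$ and independent both of $k$ and of the particular value of $\served{k-1}$. Taking $\ES$-expectations over $\served{k-1}$ then yields $\ES[f(S_{c(k)})]\leq C_{f,\sss S}$, which is the claim. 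The only genuinely delicate ingredient is the denominator estimate: the size-biasing depletes the large-service customers first, and the truncation to $\{\kappa\leq S_l\leq K\}$ is precisely what isolates a macroscopic pool of customers whose $S^{\alpha}$-values are bounded away from zero and of which at most a $c$-fraction can have entered service by step $k\leq cn$. (One should also double-check the mild hypothesis $\mathbb P(S>0)=1$, needed only so that the truncation set has density exceeding $c$; this is implicit in the heavy-traffic setup.)
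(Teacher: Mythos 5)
Your proposal is correct and follows essentially the same strategy as the paper: bound the numerator of the conditional size-biased average by the full sum $\sum_{j\in[n]}f(S_j)S_j^{\alpha}\sim n\,\E[f(S)S^{\alpha}]$ and bound the denominator below by a quantity of order $n$ that survives the worst-case removal of the $k-1\leq cn$ served customers. The only difference is cosmetic: the paper obtains the denominator bound via order statistics and convergence of the empirical quantile $S_{(\lfloor pn\rfloor)}\to\xi_p$, whereas you use a fixed truncation level $\kappa$ with $\mathbb P(S\geq\kappa)>c$ and a counting argument — an equally valid (and arguably more elementary) route to the same $\kappa^{\alpha}(c'-c)n$ lower bound.
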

\begin{proof}
We compute explicitly
\begin{align}%
\ES[f(S_{c(k)})] &= \ES\Big[\frac{\sum_{j\nin\served{k-1}}f(S_j)S_j^{\alpha}}{\sum_{j\nin\served{k-1}}S_j^{\alpha}}\Big]\nnl
& = \ES\Big[\frac{\sum_{j\in[n]}f(S_j)S_j^{\alpha}- \sum_{j\in\served{k}}f(S_j)S_j^{\alpha}}{\sum_{j\nin\served{k-1}}S_j^{\alpha}}\Big]\nnl
& \leq \ES\Big[\frac{1}{\sum_{j\nin\served{k-1}}S_j^{\alpha}}\Big]\sum_{j\in[n]}f(S_j)S_j^{\alpha}.
\end{align}%
We have the almost sure bound
\begin{align}%
\frac{1}{\sum_{j\nin\served{k-1}}S_j^{\alpha}} &= \frac{1}{\sum_{j\in[n]}S_j^{\alpha} - \sum_{j\in\served{k-1}}S_j^{\alpha}}  \leq \frac{1}{\sum_{j\in[n]}S_j^{\alpha} - \sum_{j\in\served{k-1}}S_j^{\alpha}} \nnl
& \leq \frac{1}{\sum_{j\in[n]}S_j^{\alpha} - \sum_{j=1}^{k-1}S_{(n-j+1)}^{\alpha}} = \frac{1}{\sum_{j=1}^{n-k+1}S_{(j)}^{\alpha}},
\end{align}%
where $S_{(1)}^{\alpha}\leq S_{(2)}^{\alpha}\leq\ldots\leq S_{(n)}^{\alpha}$ denote the order statistics of the finite sequence $(S_i^{\alpha})_{i\in[n]}$.
There exists $p\in(0,1)$ such that $n-k+1\geq p n$, for large enough $n$. Consequently,
\begin{equation}%
\frac{1}{\sum_{j\nin\served{k-1}}S_j^{\alpha}} \leq \frac{1}{\sum_{j=1}^{\lfloor pn\rfloor}S_{(j)}^{\alpha}},
\end{equation}%
so that we have
\begin{equation}%
\ES[f(S_{c(k)})] \leq \frac{\sum_{j\in[n]}f(S_j)S_j^{\alpha}}{\sum_{j=1}^{\lfloor pn\rfloor}S_{(j)}^{\alpha}}.
\end{equation}%
%
%
Let us denote by $\xi_p$ the $p$-th quantile of the distribution $F_{\sss S}(\cdot)$ and let us assume, without loss of generality, that $f_{\sss S}(\xi_p) > 0$.

Note that $S_{(\lfloor np\rfloor)} = F_{n, \sss S}^{-1}(\lfloor np \rfloor/n)$, where $F_{n,\sss S}(t) = \sum_{i=1}^n\mathds 1_{\{S_i\leq t\}}/n$ is the empirical distribution function of the $(S_i)_{i=1}^n$, and $\xi_p = F_{\sss S}^{-1}(p)$. Indeed, the assumption $f_{\sss S}(\xi_p)>0$ implies that $F_{\sss S}(\cdot)$ is invertible in a neighborhood of $\xi_p$.
We have that, as $n\rightarrow\infty$,
%
\begin{equation}%
S_{(\lfloor np\rfloor)}\asconv \xi_p.
\end{equation}%
In particular, as $n\rightarrow\infty$,
\begin{equation}%
\frac{1}{n}\Big\vert \sum_{j\in[n]} S_j \mathds 1_{\{S_j\leq \xi_p\}} - \sum_{j\in[n]}S_j\mathds 1_{\{S_j\leq S_{(\lfloor pn\rfloor)}\}} \Big\vert\asconv 0.
\end{equation}%
Therefore, by the strong Law of Large Numbers, as $n\rightarrow\infty$,
\begin{equation}%
\frac{\sum_{j=1}^{\lfloor pn\rfloor}S_{(j)}}{n}\asconv \E[S\mathds 1_{\{S\leq \xi_p\}}].
\end{equation}%
Then, choosing $C_{n,f,\sss S} = \E[f(S)S^{\alpha}]/\E[S\mathds 1_{\{S\leq \xi_p\}}] + \varepsilon$, for an arbitrary $\varepsilon>0$, gives the desired result.
\end{proof}
If $\alpha >0$, as is the case in our setting, the proof of Lemma \ref{lem:expectation_service_times_ordering} shows that, uniformly in $k=O(n^{2/3})$,
\begin{align}
\ES[f(S_{c(k)})]&\leq \frac{\sum_{j\in[n]}f(S_j)S_j^{\alpha}}{\sum_{j=1}^{\lfloor pn\rfloor}S_{(j)}^{\alpha}}\nnl
&= \frac{\sum_{j\in[n]}f(S_j)S_j^{\alpha}}{\sum_{j=1}^{n}S_{(j)}^{\alpha}}\Big(1 + \frac{\sum_{j=\lfloor pn\rfloor}^{n}S_{(j)}^{\alpha}}{\sum_{j=1}^{\lfloor pn\rfloor}S_{(j)}^{\alpha}}\Big),
\end{align}
and therefore
\begin{equation}\label{eq:average_first_customer_service_time_is_larger}%
\ES[f(S_{c(k)})]\leq \ES[f(S_{c(1)})](1+\OPS(1)).
\end{equation}%
If $f(\cdot)$ is an increasing function, \eqref{eq:average_first_customer_service_time_is_larger} makes precise the intuition that, if $\alpha>0$, customers with larger job sizes join the queue earlier. We will often make use of the expression \eqref{eq:average_first_customer_service_time_is_larger}.

The following lemma will often prove useful in dealing with  sums over a random index set:
\begin{lemma}[Uniform convergence of random sums]\label{lem:unif_convergence_random_sums}%
Let $(S_j)_{j=1}^{n}$ be a sequence of positive random variables such that $\mathbb E[S^{2+\alpha}] < + \infty$, for $\alpha\in(0,1)$. Then,
\begin{equation}%
\sup_{\substack{\mathcal X\subseteq[n]\\ \vert \mathcal X \vert = \OP (n^{2/3})}}\frac{1}{n}\sum_{j\in \mathcal X}S_j^{\alpha} = \oP(1).
\end{equation}%
\end{lemma}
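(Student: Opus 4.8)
The plan is to dominate the supremum over all index sets $\mathcal{X}$ of size $O_{\mathbb{P}}(n^{2/3})$ by the supremum over the *worst possible* such set, namely the one picking out the largest values of $S_j^\alpha$. Concretely, fix $M>0$ and $K>0$ and work on the event $\{|\mathcal{X}|\le Kn^{2/3}\}$, which has probability at least $1-\varepsilon$ for $K=K(\varepsilon)$ large by the assumption $|\mathcal{X}|=O_{\mathbb{P}}(n^{2/3})$. On this event, for any admissible $\mathcal{X}$,
\begin{equation}
\frac{1}{n}\sum_{j\in\mathcal{X}}S_j^\alpha \le \frac{1}{n}\sum_{j=n-\lceil Kn^{2/3}\rceil+1}^{n} S_{(j)}^\alpha,
\end{equation}
where $S_{(1)}^\alpha\le\cdots\le S_{(n)}^\alpha$ are the order statistics of $(S_j^\alpha)_{j\in[n]}$. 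So it suffices to prove that the right-hand side, the sum of the top $\Theta(n^{2/3})$ order statistics divided by $n$, is $o_{\mathbb{P}}(1)$.

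The next step is a truncation argument. Write $S_j^\alpha = S_j^\alpha\mathds{1}_{\{S_j^\alpha\le b_n\}} + S_j^\alpha\mathds{1}_{\{S_j^\alpha > b_n\}}$ for a threshold $b_n\to\infty$ to be chosen. For the truncated part, the top $\lceil Kn^{2/3}\rceil$ order statistics are each at most $b_n$, so their contribution is at most $Kn^{2/3}b_n/n = Kb_n n^{-1/3}$, which is $o(1)$ provided $b_n = o(n^{1/3})$. For the tail part, the sum of the top order statistics exceeding $b_n$ is bounded by $\sum_{j\in[n]}S_j^\alpha\mathds{1}_{\{S_j^\alpha>b_n\}}$, whose expectation is $n\,\E[S^\alpha\mathds{1}_{\{S^\alpha>b_n\}}]$; by Markov's inequality this term divided by $n$ is $O_{\mathbb{P}}(\E[S^\alpha\mathds{1}_{\{S^\alpha>b_n\}}])$, which tends to $0$ as $b_n\to\infty$ since $\E[S^\alpha]<\infty$ (indeed $\E[S^{2+\alpha}]<\infty$ gives far more). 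Choosing for instance $b_n = n^{1/4}$ makes both pieces vanish, and combining with the arbitrariness of $\varepsilon$ (via $K$) yields the claim. One can alternatively extract the rate quantitatively: since $\E[S^{2+\alpha}]<\infty$, a Markov bound at level $b_n$ gives $\P(S^\alpha>b_n)=O(b_n^{-(2+\alpha)/\alpha})$ and $\E[S^\alpha\mathds{1}_{\{S^\alpha>b_n\}}]=O(b_n^{-2/\alpha})$, which comfortably beats the $n^{2/3}$ count; this is where the second-moment hypothesis $\E[S^{2+\alpha}]<\infty$ (rather than merely $\E[S^\alpha]<\infty$) is genuinely used, and it is consistent with the $n^{2/3}$ window appearing throughout the paper.

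The main obstacle is purely bookkeeping: making the reduction to order statistics fully rigorous when $\mathcal{X}$ is a *random* index set possibly correlated with $(S_j)_{j\in[n]}$. The clean way around this is to observe that the bound $\frac{1}{n}\sum_{j\in\mathcal{X}}S_j^\alpha \le \frac{1}{n}\sum_{j:\,\text{top }|\mathcal{X}|} S_{(j)}^\alpha$ holds *deterministically* for every realization of $(S_j)$ and every set $\mathcal{X}$ of the given cardinality, so no independence between $\mathcal{X}$ and $(S_j)$ is needed — one only needs the high-probability cardinality bound. After that, everything is a deterministic pigeonhole estimate plus one Markov inequality on an i.i.d.\ sum, and the $\alpha\in(0,1)$ restriction plays no essential role beyond matching the lemma's stated hypotheses (the argument works verbatim for $\alpha\in(0,1]$, and trivially for $\alpha=0$ where the sum is just $|\mathcal{X}|/n = o_{\mathbb{P}}(1)$).
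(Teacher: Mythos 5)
Your proof is correct, but it takes a different route from the paper's. The paper's argument is a one-line worst-case bound: by Lemma \ref{lem:max_iid_random_variables} applied to $(S_j^{2+\alpha})_{j\in[n]}$ one has $\max_{j\in[n]}S_j^{\alpha}=\oP(n^{\alpha/(2+\alpha)})$, and then
$n^{-1}\sum_{j\in\mathcal X}S_j^{\alpha}\leq n^{-1}\vert\mathcal X\vert\max_{j}S_j^{\alpha}=\oP(n^{\frac{2}{3}\frac{\alpha-1}{2+\alpha}})$,
which is $\oP(1)$ precisely because $\alpha\leq 1$. You instead reduce to the top $\lceil Kn^{2/3}\rceil$ order statistics and split at a truncation level $b_n$ with $b_n\to\infty$, $b_n=o(n^{1/3})$, handling the bulk by the pigeonhole bound $Kb_nn^{-1/3}$ and the tail by Markov's inequality together with $\E[S^{\alpha}\mathds 1_{\{S^{\alpha}>b_n\}}]\to 0$. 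Both arguments correctly handle the fact that $\mathcal X$ may depend on $(S_j)_{j\in[n]}$, since in each case the bounding quantity is a deterministic function of the sample once the cardinality is controlled. What your route buys is generality: it uses only $\E[S^{\alpha}]<\infty$ for the qualitative statement and places no upper restriction on $\alpha$, whereas the paper's exponent computation breaks even exactly at $\alpha=1$ and genuinely needs the $(2+\alpha)$-th moment to make the maximum small enough. (Your remark that the second-moment hypothesis is ``genuinely used'' is really only about the quantitative rate, not the $\oP(1)$ conclusion.) What the paper's route buys is brevity and reuse of an already-stated lemma. Either proof is acceptable.
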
%
\begin{proof}%
By Lemma \ref{lem:max_iid_random_variables}, $\max_{j\in[n]} S_j^{\alpha} = \oP(n^{\alpha/(2+\alpha)})$. This gives
\begin{align}%
\sup_{\substack{\mathcal X\subseteq[n]\\ \vert \mathcal X \vert = \OP (n^{2/3})}}\frac{1}{n}\sum_{j\in \mathcal X}S_j^{\alpha} \leq \frac{\max_{j\in[n]}S_j^{\alpha}}{n^{1/3}}\OP(1) = \oP (n^{\frac{\alpha-2/3-\alpha/3}{2+\alpha}}) = \oP(n^{\frac{2}{3}\frac{\alpha-1}{2+\alpha}}).
\end{align}%
Since $\alpha-1 \leq 0$ by assumption, the claim is proven.
\end{proof}%
We now focus on the $i$-th customer joining the queue (for $i$ large) and characterize the distribution of its service time. In particular, for $\alpha>0$ this is different from $S_i$.
\begin{lemma}[Size-biased distribution of the service times]\label{lem:size_biased_distribution}%
For every bounded, real-valued continuous function $f(\cdot)$, as $n\rightarrow\infty$,
\begin{equation}\label{eq:size_biased_distribution}%
\ES[f(S_{c(i)})\mid \mathcal F _{i-1}]\Pconv\frac{\mathbb E[f(S)S^{\alpha}]}{\mathbb E[S^{\alpha}]},
\end{equation}%
uniformly for $i=\OPS(n^{2/3})$. Moreover, as $n\rightarrow\infty$,
\begin{equation}%
\ES[f(S_{c(i)})]\rightarrow\frac{\mathbb E[f(S)S^{\alpha}]}{\mathbb E[S^{\alpha}]},\qquad \mathrm{for}~i=\OPS(n^{2/3}).
\end{equation}%
\end{lemma}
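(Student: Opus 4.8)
The plan is to compute the conditional expectation $\ES[f(S_{c(i)})\mid\mathcal F_{i-1}]$ explicitly using Lemma \ref{lem:size_biased_reordering}, and then show that the random index set $\served{i-1}$ over which we sum can be removed at negligible cost when $i = \OPS(n^{2/3})$. Concretely, Lemma \ref{lem:size_biased_reordering} gives
\begin{equation}
\ES[f(S_{c(i)})\mid\mathcal F_{i-1}] = \frac{\sum_{j\nin\served{i-1}}f(S_j)S_j^{\alpha}}{\sum_{j\nin\served{i-1}}S_j^{\alpha}}
= \frac{\sum_{j\in[n]}f(S_j)S_j^{\alpha} - \sum_{j\in\served{i-1}}f(S_j)S_j^{\alpha}}{\sum_{j\in[n]}S_j^{\alpha} - \sum_{j\in\served{i-1}}S_j^{\alpha}}.
\end{equation}
By the strong law of large numbers, $\frac1n\sum_{j\in[n]}f(S_j)S_j^{\alpha}\asconv\E[f(S)S^{\alpha}]$ (finite since $f$ is bounded and $\E[S^{\alpha}]<\infty$) and $\frac1n\sum_{j\in[n]}S_j^{\alpha}\asconv\E[S^{\alpha}]$. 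So it remains to show the two ``correction'' sums over $\served{i-1}$ are $\oP(n)$ uniformly in $i=\OPS(n^{2/3})$. For the denominator correction, since $|\served{i-1}| = i-1 = \OPS(n^{2/3})$, Lemma \ref{lem:unif_convergence_random_sums} applied to $(S_j^{\alpha})$ — equivalently, the bound $\frac1n\sum_{j\in\served{i-1}}S_j^{\alpha}\leq \frac{i-1}{n}\cdot\frac{\max_j S_j^{\alpha}}{n^{2/3}}\cdot n^{2/3}/(i-1)$ together with $\max_j S_j^{\alpha}=\oP(n^{\alpha/(2+\alpha)})$ from Lemma \ref{lem:max_iid_random_variables} — gives $\frac1n\sum_{j\in\served{i-1}}S_j^{\alpha}=\oP(1)$. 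For the numerator correction, boundedness of $f$ gives $|\sum_{j\in\served{i-1}}f(S_j)S_j^{\alpha}|\leq \|f\|_\infty\sum_{j\in\served{i-1}}S_j^{\alpha}=\oP(n)$ by the same estimate. Dividing numerator and denominator by $n$ and using that the limiting denominator $\E[S^{\alpha}]$ is strictly positive yields \eqref{eq:size_biased_distribution}, with the convergence uniform over $i=\OPS(n^{2/3})$ because all the error bounds were uniform in $|\served{i-1}|=\OPS(n^{2/3})$.

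The second claim, $\ES[f(S_{c(i)})]\to\E[f(S)S^{\alpha}]/\E[S^{\alpha}]$, follows from the first by taking $\ES$-expectations: the tower property gives $\ES[f(S_{c(i)})]=\ES\big[\ES[f(S_{c(i)})\mid\mathcal F_{i-1}]\big]$, and since $f$ is bounded the integrand $\ES[f(S_{c(i)})\mid\mathcal F_{i-1}]$ is bounded by $\|f\|_\infty$ uniformly, so dominated convergence upgrades the convergence in probability from the first part to convergence of expectations. (Alternatively one invokes Lemma \ref{lem:expectation_service_times_ordering} directly for a uniform integrability-free argument, but the bounded-$f$ hypothesis makes dominated convergence immediate.)

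The main obstacle is establishing that the removal of the random set $\served{i-1}$ is uniform in $i$ up to order $n^{2/3}$, rather than merely valid for each fixed $i$. This is precisely what Lemma \ref{lem:unif_convergence_random_sums} is designed to handle: the supremum there is over \emph{all} subsets $\mathcal X\subseteq[n]$ of size $\OP(n^{2/3})$, which in particular covers the nested family $\{\served{i-1}\}_{i\leq Cn^{2/3}}$ simultaneously, so no additional union bound or martingale maximal inequality is needed. One minor technical point to be careful about: Lemma \ref{lem:unif_convergence_random_sums} is stated for $\alpha\in(0,1)$, so the boundary cases $\alpha=0$ (where the statement is trivial, as $S_{c(i)}\sr{\mathrm d}{=} S_i$) and $\alpha=1$ (where $\max_j S_j = \oP(n^{1/3})$ still holds under $\E[S^3]<\infty$, giving $\frac1n\sum_{j\in\served{i-1}}S_j = \oP(1)$) must be noted separately, but they are handled by the same estimates. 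With these pieces in place the lemma follows in a few lines.
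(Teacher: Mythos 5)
Your proposal is correct and follows essentially the same route as the paper: the same explicit decomposition of $\ES[f(S_{c(i)})\mid\mathcal F_{i-1}]$ into full sums minus corrections over $\served{i-1}$, the law of large numbers plus Lemma \ref{lem:unif_convergence_random_sums} for the uniform control of the corrections, and boundedness of $f$ with dominated convergence for the second claim. Your extra remark on the boundary cases $\alpha=0$ and $\alpha=1$ of Lemma \ref{lem:unif_convergence_random_sums} is a careful touch the paper leaves implicit, but it does not change the argument.
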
%
\begin{proof}
First note that
\begin{align}
\ES[f(S_{c(i)}) \mid \mathcal F_{i-1}] &= \sum_{j\nin \served{i-1}}f(S_j)\PS(c(i) = j \mid\mathcal F_{i-1}) = \sum_{j\nin \served{i-1}}\frac{f(S_j)S_j^{\alpha}}{\sum_{l\nin \served{i-1}}S_l^{\alpha}}.
%
\end{align}
This can be further decomposed as
\begin{equation}%
\ES[f(S_{c(i)}) \mid \mathcal F_{i-1}] = \frac{\sum_{j=1}^nf(S_j)S_j^{\alpha} - \sum_{j\in\served{i-1}}f(S_j)S_j^{\alpha}}{\sum_{l=1}^nS_l^{\alpha} - \sum_{l\in\served{i-1}}S_l^{\alpha}}.
\end{equation}%
Since $\vert\served{i-1}\vert = i-1$ and $i = \OP(n^{2/3})$,  by the Law of Large Numbers and Lemma \ref{lem:unif_convergence_random_sums},
\begin{align}%
\frac{\sum_{j\nin\served{i-1}}f(S_j)S_j^{\alpha}}{n}\Pconv \E[f(S)S^{\alpha}],\quad\frac{\sum_{l\nin\served{i-1}}S_l^{\alpha}}{n}\Pconv \E[S^{\alpha}].
\end{align}%
uniformly in $i = \OP(n^{2/3})$. This gives the first claim.

Furthermore, we bound $\ES[f(S_{c(i)})\mid \mathcal F_{i-1}]$ as
\begin{equation}%
\ES[f(S_{c(i)})\mid \mathcal F_{i-1}] = \sum_{j\nin \served{i-1}}\frac{f(S_j)S_j^{\alpha}}{\sum_{l\nin \served{i-1}}S_l^{\alpha}} \leq \sup_{x\geq0}f(x)<\infty.
\end{equation}%
Since $\ES[f(S_{c(i)})] = \ES[\ES[f(S_{c(i)}) \mid \mathcal F_{i-1}]]$, using \eqref{eq:size_biased_distribution} and the Dominated Convergence \mbox{Theorem} the second claim follows.

\end{proof}
In Lemma \ref{lem:size_biased_distribution} we have studied the distribution of the service time of the $i$-th customer, and we now focus on its (conditional) moments. The following lemma should be interpreted as follows: Because of the size-biased re-ordering of the customer arrivals, the service time of the $i$-th customer being served (for $i$ large) is highly concentrated.
\begin{lemma}\label{lem:1+alphaConditionalMoment}%
For any fixed $\gamma\in[-1,1]$,
\begin{equation}\label{eq:1+alphaConditionalMoment}%
\ES[S_{c(i)}^{1+\gamma} \mid \mathcal F_{i-1}] = \frac{\mathbb E[S^{1+\gamma+\alpha}]}{\mathbb E[S^{\alpha}]} + o_{\mathbb P}(1)\quad\mathrm{for}~i = \OPS(n^{2/3}),
\end{equation}%
where the error term is uniform in $i= O_{\PS}(n^{2/3})$.
%
Moreover, the convergence holds in $L^1$, i.e.
\begin{equation}\label{eq:1+alphaConditionalMoment_L1_convergence}%
\ES\Big[\Big\vert\ES[ S_{c(i)}^{1+\gamma}\mid \mathcal F_{i-1}] - \frac{\mathbb E[S^{1+\gamma+\alpha}]}{\mathbb E[S^{\alpha}]}\Big\vert\Big] = \oP(1),
\end{equation}%
uniformly in $i=\OPS(n^{2/3})$.
\end{lemma}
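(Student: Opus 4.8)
The plan is to reduce the claim to the strong law of large numbers applied to the explicit ratio for the conditional expectation. By Lemma~\ref{lem:size_biased_reordering},
\begin{equation}
\ES[S_{c(i)}^{1+\gamma}\mid\mathcal F_{i-1}]=\frac{\sum_{j\nin\served{i-1}}S_j^{1+\gamma+\alpha}}{\sum_{l\nin\served{i-1}}S_l^{\alpha}},
\end{equation}
and since $\gamma\in[-1,1]$ gives $1+\gamma+\alpha\in[\alpha,2+\alpha]$, one has $S^{1+\gamma+\alpha}\leq S^{\alpha}+S^{2+\alpha}$, so $\E[S^{1+\gamma+\alpha}]<\infty$ by the standing assumption $\E[S^{2+\alpha}]<\infty$. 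I would then show that, for each $\beta\in\{\alpha,1+\gamma+\alpha\}$,
\begin{equation}
\frac1n\sum_{j\nin\served{i-1}}S_j^{\beta}=\frac1n\sum_{j\in[n]}S_j^{\beta}-\frac1n\sum_{j\in\served{i-1}}S_j^{\beta}\Pconv\E[S^{\beta}]
\end{equation}
uniformly in $i=\OPS(n^{2/3})$: the first term on the right converges a.s.\ by the strong law of large numbers, and the second is $\oP(1)$ uniformly. For the latter I would use that $\vert\served{i-1}\vert=i-1\leq\delta n$ w.h.p.\ for every fixed $\delta>0$, so that $\sum_{j\in\served{i-1}}S_j^{\beta}$ is dominated by the sum of the $\lceil\delta n\rceil$ largest order statistics of $(S_j^{\beta})_{j\in[n]}$; arguing exactly as in the proof of Lemma~\ref{lem:expectation_service_times_ordering} (replacing the random threshold by the deterministic $(1-\delta)$-quantile $\xi_{1-\delta}$), this sum divided by $n$ is a.s.\ asymptotic to $\E[S^{\beta}\mathds 1_{\{S\geq\xi_{1-\delta}\}}]$, which tends to $0$ as $\delta\downarrow0$ because $\E[S^{\beta}]<\infty$. (When $\alpha\in(0,1)$ one may alternatively write $S_j^{\beta}=S_j^{\beta}\mathds 1_{\{S_j\leq M\}}+S_j^{\beta}\mathds 1_{\{S_j>M\}}$, bound the first piece by $M^{1+\gamma}S_j^{\alpha}$, invoke Lemma~\ref{lem:unif_convergence_random_sums}, and then let $M\to\infty$.) Dividing the two estimates and using $\E[S^{\alpha}]>0$ then yields \eqref{eq:1+alphaConditionalMoment}.

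For the $L^1$ statement \eqref{eq:1+alphaConditionalMoment_L1_convergence} the plan is to supply a uniform dominating bound and combine it with the convergence in probability just obtained. Fix $p\in(0,1)$; then $n-i+1\geq pn$ w.h.p.\ uniformly in $i=\OPS(n^{2/3})$, and, as in the proof of Lemma~\ref{lem:expectation_service_times_ordering}, $\sum_{l\nin\served{i-1}}S_l^{\alpha}\geq\sum_{l=1}^{n-i+1}S_{(l)}^{\alpha}\geq\sum_{l=1}^{\lfloor pn\rfloor}S_{(l)}^{\alpha}$, whence
\begin{equation}
\ES[S_{c(i)}^{1+\gamma}\mid\mathcal F_{i-1}]\leq\frac{\frac1n\sum_{j\in[n]}S_j^{1+\gamma+\alpha}}{\frac1n\sum_{l=1}^{\lfloor pn\rfloor}S_{(l)}^{\alpha}},
\end{equation}
whose right-hand side converges a.s.\ to the finite deterministic constant $K:=\E[S^{1+\gamma+\alpha}]/\E[S^{\alpha}\mathds 1_{\{S\leq\xi_p\}}]$. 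Thus, w.h.p.\ on an event measurable with respect to $(S_j)_{j\geq1}$, the conditional expectation is at most $K+1$ uniformly in $i=\OPS(n^{2/3})$ and in the arrival order, so with $c:=\E[S^{1+\gamma+\alpha}]/\E[S^{\alpha}]$ and any $\varepsilon>0$ one may bound $\ES[\vert\ES[S_{c(i)}^{1+\gamma}\mid\mathcal F_{i-1}]-c\vert]\leq\varepsilon+(K+1+c)\,\PS(\vert\ES[S_{c(i)}^{1+\gamma}\mid\mathcal F_{i-1}]-c\vert>\varepsilon)$. This reduces \eqref{eq:1+alphaConditionalMoment_L1_convergence} to showing that the last $\PS$-probability tends to $0$ in $\mathbb P$-probability uniformly in $i$, which the proof of the first part delivers once the law-of-large-numbers estimates above are read as convergences in $\PS$-probability uniform in $i$.

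The step I expect to be the real obstacle is the loss of boundedness compared with Lemma~\ref{lem:size_biased_distribution}: the function $x\mapsto x^{1+\gamma}$ is unbounded and the exponent $1+\gamma+\alpha$ may exceed $1$, so $\frac1n\sum_{j\in\served{i-1}}S_j^{1+\gamma+\alpha}$ must be controlled uniformly over the size-biased — hence atypically large — index set $\served{i-1}$, and the same estimates must be pushed to produce the uniform almost sure domination needed for the $L^1$ upgrade. The order-statistics comparison together with the truncation sketched above is what makes both of these go through; everything else is bookkeeping around the law of large numbers.
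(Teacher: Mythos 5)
Your proposal is correct, but it takes a genuinely different route from the paper. The paper's proof of \eqref{eq:1+alphaConditionalMoment} never touches the depleted sums of the unbounded summands directly: it truncates, writing $S_{c(i)}^{1+\gamma}=(S_{c(i)}\wedge K)^{1+\gamma}+((S_{c(i)}-K)^+)^{1+\gamma}$, feeds the bounded part into Lemma \ref{lem:size_biased_distribution} (which is where the ratio decomposition and Lemma \ref{lem:unif_convergence_random_sums} actually live), and kills the tail with Markov's inequality plus the uniform bound $C_{f_K,\sss S}$ from Lemma \ref{lem:expectation_service_times_ordering}, letting $K\to\infty$ at the end; the $L^1$ statement is then obtained by expanding the ratio and using the algebraic inequality $\vert(a-b)/(c-d)-a/c\vert\leq ad/c^2+bc/c^2$ together with Lemma \ref{lem:unif_convergence_random_sums}. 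You instead work with the exact ratio $\sum_{j\nin\served{i-1}}S_j^{1+\gamma+\alpha}/\sum_{l\nin\served{i-1}}S_l^{\alpha}$ for the unbounded function from the start, and control $n^{-1}\sum_{j\in\served{i-1}}S_j^{\beta}$ uniformly by dominating it with the top $\lceil\delta n\rceil$ order statistics — i.e.\ you re-derive the mechanism inside the proof of Lemma \ref{lem:expectation_service_times_ordering} rather than invoking its statement, and thereby extend Lemma \ref{lem:size_biased_distribution}'s argument to unbounded $f$ in one pass. This is sound: the order-statistics bound is $\PS$-sure (it does not depend on the arrival order), $\E[S^{1+\gamma+\alpha}]<\infty$ follows from $\E[S^{2+\alpha}]<\infty$ exactly as you say, and your uniform a.s.\ domination makes the $L^1$ upgrade essentially immediate (indeed your bound $\vert\ES[S_{c(i)}^{1+\gamma}\mid\mathcal F_{i-1}]-c\vert\leq\eta_n(\delta)$ with $\eta_n$ depending only on $(S_j)$ gives \eqref{eq:1+alphaConditionalMoment_L1_convergence} directly, without even needing the $\varepsilon$-splitting; note also that $\PS(\vert\cdot\vert>\varepsilon)\Pconv 0$ follows from $\mathbb P(\vert\cdot\vert>\varepsilon)\to0$ by the tower property, so that reduction is harmless). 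What each approach buys: the paper's is shorter on the page because it recycles Lemmas \ref{lem:expectation_service_times_ordering} and \ref{lem:size_biased_distribution} as black boxes; yours is more self-contained and makes transparent exactly where the moment assumption and the $O(n^{2/3})$ depletion enter, at the cost of repeating the quantile/order-statistics argument (including inheriting the paper's own "without loss of generality $f_{\sss S}(\xi_p)>0$" gloss).
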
%
%
\begin{proof}
In order to apply Lemma \ref{lem:size_biased_distribution}, we first split
\begin{equation}%
S_{c(i)}^{1+\gamma} = (S_{c(i)}\wedge K)^{1+\gamma} + ((S_{c(i)} - K)^+)^{1+\gamma},
\end{equation}%
where $K>0$ is arbitrary, so that
\begin{equation}%
\ES[S_{c(i)}^{1+\gamma} \mid \mathcal F_{i-1}] = \ES[(S_{c(i)}\wedge K)^{1+\gamma} \mid \mathcal F_{i-1}] + \ES[((S_{c(i)} - K)^+)^{1+\gamma} \mid \mathcal F_{i-1}].
\end{equation}%
The first term is bounded, and therefore converges to $\E[(S\wedge K)^{1+\gamma}S^{\alpha}]/\E[S^{\alpha}]$ by Lemma \ref{lem:size_biased_distribution}.
The second term is bounded through Markov's inequality, as
\begin{equation}%
\PS (\ES[((S_{c(i)} - K)^+)^{1+\gamma} \mid \mathcal F_{i-1}] \geq \varepsilon) \leq \frac{\ES[((S_{c(i)}-K)^+)^{1+\gamma}]}{\varepsilon}.
\end{equation}%
Next we apply Lemma \ref{lem:expectation_service_times_ordering} with $f(x) = f_{\sss K}(x) =((x-K)^+)^{1+\gamma}$,
\begin{equation}\label{eq:1+alpha_error_term_bound}%
\ES[((S_{c(i)}-K)^+)^{1+\gamma}] \leq  C_{f_{\sss K},\sss S}.
\end{equation}%
Therefore,
\begin{align}\label{eq:1+alpha_error_term_bound_second}%
\Big\vert \ES[S_{c(i)}^{1+\gamma} \mid \mathcal F_{i-1}] - \frac{\E[S^{1+\gamma+\alpha}]}{\E[S^{\alpha}]}\Big\vert &\leq \Big\vert\ES[(S_{c(i)}\wedge K)^{1+\gamma} \mid \mathcal F_{i-1}] - \frac{\E[S^{1+\gamma+\alpha}]}{\E[S^{\alpha}]}\Big\vert + C_{f_{\sss K},\sss S}.
\end{align}%
The proof of Lemma \ref{lem:expectation_service_times_ordering} shows that, for any $\varepsilon>0$, $\lim_{K\rightarrow\infty}C_{f_{\sss K},\sss S} \leq \varepsilon$, thus $\lim_{K\rightarrow\infty}C_{f_{\sss K},\sss S} = 0$. Therefore, by letting $K\rightarrow\infty$ in \eqref{eq:1+alpha_error_term_bound_second},  \eqref{eq:1+alphaConditionalMoment} follows. Next, we split
\begin{align}%
\ES\Big[\Big\vert \ES[S_{c(i)}^{1+\gamma} \mid \mathcal F_{i-1}] - \frac{\E[S^{1+\gamma+\alpha}]}{\E[S^{\alpha}]}\Big\vert\Big] &\leq \ES\Big[\Big\vert(S_{c(i)}\wedge K)^{1+\gamma} - \frac{\E[S^{1+\gamma+\alpha}]}{\E[S^{\alpha}]}\Big\vert\Big]\nnl
&\quad+\ES[((S_{c(i)} - K)^+)^{1+\gamma}].
\end{align}%
The second term can be bounded as in \eqref{eq:1+alpha_error_term_bound}. For the first term,
\begin{align}%
\ES&\Big[\Big\vert(S_{c(i)}\wedge K)^{1+\gamma} - \frac{\E[S^{1+\gamma+\alpha}]}{\E[S^{\alpha}]}\Big\vert\Big] \leq \Big\vert\frac{\sum_{j=1}^n(S_{j}\wedge K)^{1+\gamma}S_j^{\alpha}}{\sum_{j=1}^n S_j^{\alpha}} - \frac{\E[S^{1+\gamma+\alpha}]}{\E[S^{\alpha}]}\Big\vert \nnl
&+\ES\Big[\Big\vert \frac{\sum_{j=1}^n(S_j\wedge K)^{1+\gamma}S_j^{\alpha}\sum_{l\in\served{i-1}}S_l^{\alpha}  }{(\sum_{j=1}^nS_j^{\alpha})^2}\Big\vert\Big] + \ES\Big[\Big\vert\frac{\sum_{l=1}^n S_l^{\alpha}\sum_{j\in \served{i-1}}(S_j\wedge K)^{1+\gamma}S_j^{\alpha}}{(\sum_{j=1}^nS_j^{\alpha})^2} \Big\vert\Big],
\end{align}%
where we have used that $\vert(a-b)/(c-d) -a/c\vert\leq ad/c^2 + bc/c^2$, for positive $a$, $b$, $c$, $d$. The second and third terms converge uniformly over $i=O_{\PS}(n^{2/3})$ by Lemma \ref{lem:unif_convergence_random_sums}. Summarizing,
\begin{align}%
\ES\Big[\Big\vert \ES[S_{c(i)}^{1+\gamma} \mid \mathcal F_{i-1}] - \frac{\E[S^{1+\gamma+\alpha}]}{\E[S^{\alpha}]}\Big\vert\Big] &\leq \Big\vert\frac{\sum_{j=1}^n(S_{j}\wedge K)^{1+\gamma}S_j^{\alpha}}{\sum_{j=1}^n S_j^{\alpha}} - \frac{\E[S^{1+\gamma+\alpha}]}{\E[S^{\alpha}]}\Big\vert \nnl
&\quad+ \frac{\sum_{l=1}^n((S_l-K)^+)^{1+\gamma}}{\sum_{j=1}^n S_j^{\alpha}} + \oP(1).
\end{align}%
Letting first $n\rightarrow\infty$ and then $K\rightarrow\infty$, \eqref{eq:1+alphaConditionalMoment_L1_convergence} follows.
\end{proof}

We will make use of Lemma \ref{lem:1+alphaConditionalMoment} several times throughout the proof, with the specific choices $\gamma \in \{0,\alpha,1\}$. The following lemma is of central importance in the proof of the uniform convergence of the quadratic part of the drift:
\begin{lemma}\label{lem:size_biased_service_times_unif_convergence}%
As $n\rightarrow\infty$,
\begin{equation}\label{eq:size_biased_service_times_unif_convergence}%
n^{-2/3}\sup_{j\leq tn^{2/3}}\Big\vert \sum_{i=1}^{j}\Big(S_{c(i)}^{1+\alpha} - \frac{\mathbb E[S^{1+2\alpha}]}{\mathbb E[S]}\Big)\Big\vert \stackrel{\mathbb P}{\rightarrow} 0.
\end{equation}%
\end{lemma}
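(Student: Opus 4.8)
\emph{Proof proposal.} The plan is to subtract off the $\PS$-conditional mean of each summand. Write $\mu$ for the constant in \eqref{eq:size_biased_service_times_unif_convergence}; by Lemma~\ref{lem:1+alphaConditionalMoment} with $\gamma=\alpha$ one has $\ES[S_{c(i)}^{1+\alpha}\mid\mathcal F_{i-1}]=\mu+\oP(1)$ uniformly over $i=\OPS(n^{2/3})$, with the convergence holding also in $L^1$, cf.\ \eqref{eq:1+alphaConditionalMoment_L1_convergence}. (When $\alpha=0$ the $S_{c(i)}$ are i.i.d.\ copies of $S$ and \eqref{eq:size_biased_service_times_unif_convergence} is a routine functional law of large numbers, so I assume $\alpha\in(0,1]$.) I would then split
\[
S_{c(i)}^{1+\alpha}-\mu = \underbrace{\big(S_{c(i)}^{1+\alpha}-\ES[S_{c(i)}^{1+\alpha}\mid\mathcal F_{i-1}]\big)}_{=:D_i}+\underbrace{\big(\ES[S_{c(i)}^{1+\alpha}\mid\mathcal F_{i-1}]-\mu\big)}_{=:R_i}
\]
and estimate the partial sums of $(D_i)$ and $(R_i)$ separately, always working first under $\PS$ (i.e.\ conditionally on $(S_j)_{j\ge1}$) and passing to the unconditional statement at the end via $\E[\cdot]=\E[\PS(\cdot)]$ and bounded convergence, since every quantity being bounded is a probability.

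The bias part $(R_i)$ is immediate: since $\sup_{j\le tn^{2/3}}|\sum_{i=1}^jR_i|\le\sum_{i=1}^{\lfloor tn^{2/3}\rfloor}|R_i|$, the $L^1$-estimate \eqref{eq:1+alphaConditionalMoment_L1_convergence} gives
\[
n^{-2/3}\,\ES\Big[\sup_{j\le tn^{2/3}}\Big|\sum_{i=1}^jR_i\Big|\Big]\le t\sup_{i\le tn^{2/3}}\ES[|R_i|]=\oP(1),
\]
and conditional Markov plus bounded convergence yield $n^{-2/3}\sup_{j\le tn^{2/3}}|\sum_{i=1}^jR_i|\Pconv0$.

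For the centered part, $M_j:=\sum_{i=1}^jD_i$ is a $\PS$-martingale for $(\mathcal F_i)$; here the integrability of $S_{c(i)}^{1+\alpha}$ under $\PS$ follows from Lemma~\ref{lem:expectation_service_times_ordering} with $f(x)=x^{1+\alpha}$, which is admissible because $\E[S^{1+2\alpha}]<\infty$ (indeed $1+2\alpha\le2+\alpha$ for $\alpha\le1$ and $\E[S^{2+\alpha}]<\infty$). The difficulty is that $\ES[S_{c(i)}^{2+2\alpha}]$ need not be finite, so Doob's $L^2$-inequality is not directly available; I would therefore truncate. Fix $K>0$ and write $S_{c(i)}^{1+\alpha}=a_i^K+b_i^K$ with $a_i^K=S_{c(i)}^{1+\alpha}\mathds 1_{\{S_{c(i)}\le K\}}$, $b_i^K=S_{c(i)}^{1+\alpha}\mathds 1_{\{S_{c(i)}>K\}}$, and correspondingly $M_j=M_j^a+M_j^b$. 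For the bounded piece $a_i^K\le K^{1+\alpha}$, so by orthogonality of the martingale increments under $\PS$ and Doob's $L^2$ maximal inequality $\ES[\sup_{j\le tn^{2/3}}(M_j^a)^2]\le4\sum_{i=1}^{\lfloor tn^{2/3}\rfloor}\ES[(a_i^K)^2]\le4tn^{2/3}K^{2(1+\alpha)}$, hence $\E[(n^{-2/3}\sup_{j\le tn^{2/3}}|M_j^a|)^2]\le4tK^{2(1+\alpha)}n^{-2/3}\to0$ for each fixed $K$, i.e.\ $n^{-2/3}\sup_{j\le tn^{2/3}}|M_j^a|\Pconv0$. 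For the heavy piece, $b_i^K\ge0$ gives $\sup_{j\le tn^{2/3}}|M_j^b|\le\sum_{i=1}^{\lfloor tn^{2/3}\rfloor}\big(b_i^K+\ES[b_i^K\mid\mathcal F_{i-1}]\big)$, so $n^{-2/3}\,\ES[\sup_{j\le tn^{2/3}}|M_j^b|]\le2tn^{-2/3}\sum_{i=1}^{\lfloor tn^{2/3}\rfloor}\ES[b_i^K]$; applying Lemma~\ref{lem:expectation_service_times_ordering} with $f_K(x)=x^{1+\alpha}\mathds 1_{\{x>K\}}$ (admissible since $\E[f_K(S)S^\alpha]=\E[S^{1+2\alpha}\mathds 1_{\{S>K\}}]<\infty$) bounds $\ES[b_i^K]\le C_{f_K,\sss S}$ for $n$ large, uniformly in $i\le tn^{2/3}$, whence $n^{-2/3}\,\ES[\sup_{j\le tn^{2/3}}|M_j^b|]\le2tC_{f_K,\sss S}$, and $C_{f_K,\sss S}\to0$ as $K\to\infty$ by the remark following Lemma~\ref{lem:expectation_service_times_ordering}. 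Combining, for any $\delta>0$ and fixed $K$, $\limsup_{n\to\infty}\mathbb P(n^{-2/3}\sup_{j\le tn^{2/3}}|M_j|>2\delta)\le2tC_{f_K,\sss S}/\delta$, and letting $K\to\infty$ gives $n^{-2/3}\sup_{j\le tn^{2/3}}|M_j|\Pconv0$; together with the bias estimate this proves \eqref{eq:size_biased_service_times_unif_convergence}. The main obstacle is exactly this lack of a uniform second moment for $S_{c(i)}^{1+\alpha}$: the heavy tail must be handled only in $L^1$, leaning on the monotonicity built into Lemma~\ref{lem:expectation_service_times_ordering} and on $C_{f_K,\sss S}\downarrow0$, and this is the single place where the hypothesis $\E[S^{2+\alpha}]<\infty$ enters (through $\E[S^{1+2\alpha}]<\infty$).
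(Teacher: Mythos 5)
Your proposal is correct and follows essentially the same route as the paper: reduce to the centered martingale via Lemma~\ref{lem:1+alphaConditionalMoment} (you handle the bias term more explicitly, via \eqref{eq:1+alphaConditionalMoment_L1_convergence}, than the paper's one-line "is equivalent to"), then truncate and use Doob's $L^2$ inequality for the bounded part and an $L^1$ bound via Lemma~\ref{lem:expectation_service_times_ordering} with $C_{f_K,\sss S}\downarrow 0$ for the tail. The only cosmetic difference is that you truncate at a fixed $K$ and send $K\to\infty$ after $n\to\infty$, whereas the paper uses an $n$-dependent level $K_n\to\infty$ with $K_n^{\alpha}=o(n^{1/3})$; both work, and you correctly read the limiting constant as $\E[S^{1+2\alpha}]/\E[S^{\alpha}]$ (the denominator $\E[S]$ in the displayed lemma is a typo).
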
%
\begin{proof}
By Lemma \ref{lem:1+alphaConditionalMoment}, \eqref{eq:size_biased_service_times_unif_convergence} is equivalent to
\begin{equation}%
n^{-2/3}\sup_{j\leq tn^{2/3}}\Big\vert \sum_{i=1}^{j}\Big(S_{c(i)}^{1+\alpha}- \E[S_{c(i)}^{1+\alpha}\mid \mathcal F_{i-1}]\Big)\Big\vert \stackrel{\mathbb P}{\rightarrow} 0.
\end{equation}%
We split the event space  and separately bound
\begin{equation}\label{eq:size_biased_service_times_unif_convergence_smaller_Kn}%
n^{-2/3}\sup_{j\leq tn^{2/3}}\Big\vert \sum_{i=1}^{j}\Big(S_{c(i)}^{1+\alpha}\mathds 1_{\{S^{1+\alpha}_{c(i)}\leq K_n\}} - \E[S_{c(i)}^{1+\alpha}\mathds 1_{\{S^{1+\alpha}_{c(i)}\leq K_n\}} \mid \mathcal F_{i-1}]\Big)\Big\vert
\end{equation}%
and
\begin{equation}\label{eq:size_biased_service_times_unif_convergence_greater_Kn}%
n^{-2/3}\sup_{j\leq tn^{2/3}}\Big\vert \sum_{i=1}^{j}\Big(S_{c(i)}^{1+\alpha}\mathds 1_{\{S^{1+\alpha}_{c(i)}> K_n\}} - \E[S_{c(i)}^{1+\alpha}\mathds 1_{\{S^{1+\alpha}_{c(i)}> K_n\}} \mid \mathcal F_{i-1}]\Big)\Big\vert,
\end{equation}%
for a sequence $(K_n)_{n\geq1}$ that we choose later on and is such that $K_n\rightarrow\infty$. We start with \eqref{eq:size_biased_service_times_unif_convergence_smaller_Kn}. Since the sum inside the absolute value is a martingale as a function of $j$, \eqref{eq:size_biased_service_times_unif_convergence_smaller_Kn} can be bounded through Doob's $L^p$ inequality \cite[Theorem 11.2]{klenke2008probability} with $p=2$ as
\begin{align}%
\PS\Big(\sup_{j\leq tn^{2/3}}&\Big\vert \sum_{i=1}^{j}\Big(S_{c(i)}^{1+\alpha}\mathds 1_{\{S^{1+\alpha}_{c(i)}\leq K_n\}} - \ES[S_{c(i)}^{1+\alpha}\mathds 1_{\{S^{1+\alpha}_{c(i)}\leq K_n\}} \mid \mathcal F_{i-1}]\Big)\Big\vert \geq\varepsilon n^{2/3}\Big) \nnl
&\leq \frac{1}{\varepsilon n^{4/3}}\ES\Big[\sum_{i=1}^{tn^{2/3}}(S_{c(i)}^{1+\alpha}\mathds 1_{\{S^{1+\alpha}_{c(i)}\leq K_n\}} -\ES[S_{c(i)}^{1+\alpha}\mathds 1_{\{S^{1+\alpha}_{c(i)}\leq K_n\}} \mid \mathcal F_{i-1}])^2\Big]\nnl
&\leq \frac{2}{\varepsilon n^{4/3}}\sum_{i=1}^{tn^{2/3}}\ES[S_{c(i)}^{2+2\alpha} \mathds 1_{\{S^{1+\alpha}_{c(i)}\leq K_n\}}] \leq \frac{2}{\varepsilon n^{4/3}}\sum_{i=1}^{tn^{2/3}}K_n^{2\alpha}\ES[S_{c(i)}^{2}] .
\end{align}%
Lemma \ref{lem:1+alphaConditionalMoment} allows us to  approximate $\ES[S_{c(i)}^{2}]$ uniformly by $\frac{\E[S^{2+\alpha}]}{\E[S^{\alpha}]}$. Thus, we get
\begin{align}%
\frac{2}{\varepsilon n^{4/3}}\sum_{i=1}^{tn^{2/3}}\Big(K_n^{2\alpha}\frac{\E[S^{2+\alpha}]}{\E[ S^{\alpha}]} +\oP(1)\Big) &= \frac{t K_n^{2\alpha}}{\varepsilon n^{2/3}} \OP(1),
\end{align}%
which converges to zero as $n\rightarrow\infty$ if and only if  $K_n^{\alpha}/n^{1/3}$ does.
We now turn to \eqref{eq:size_biased_service_times_unif_convergence_greater_Kn} and apply Doob's $L^1$ martingale inequality \cite[Theorem 11.2]{klenke2008probability} to obtain
\begin{align}\label{eq:size_biased_service_times_unif_convergence_greater_Kn_second}%
&\PS\Big(\sup_{j\leq tn^{2/3}}\Big\vert \sum_{i=1}^{j}\Big(S_{c(i)}^{1+\alpha}\mathds 1_{\{S^{1+\alpha}_{c(i)}> K_n\}} - \ES[S_{c(i)}^{1+\alpha}\mathds 1_{\{S^{1+\alpha}_{c(i)}> K_n\}} \mid \mathcal F_{i-1}]\Big)\Big\vert\geq\varepsilon n^{2/3}\Big) \nnl
&\leq \frac{1}{\varepsilon n^{2/3}}\ES\Big[\Big\vert\sum_{i=1}^{tn^{2/3}}(S_{c(i)}^{1+\alpha}\mathds 1_{\{S^{1+\alpha}_{c(i)}> K_n\}} -\ES[S_{c(i)}^{1+\alpha}\mathds 1_{\{S^{1+\alpha}_{c(i)}> K_n\}} \mid \mathcal F_{i-1}])\Big\vert\Big]\nnl
&\leq \frac{2}{\varepsilon n^{2/3}}\sum_{i=1}^{tn^{2/3}}\ES[S_{c(i)}^{1+\alpha}\mathds 1_{\{S^{1+\alpha}_{c(i)}> K_n\}} ] \leq \frac{2}{\varepsilon n^{2/3}}\sum_{i=1}^{tn^{2/3}}\ES[S_{c(1)}^{1+\alpha}\mathds 1_{\{S^{1+\alpha}_{c(1)}> K_n\}} ](1+\OPS(1))\nnl
& = \frac{2t}{\varepsilon}\ES[S_{c(1)}^{1+\alpha}\mathds 1_{\{S^{1+\alpha}_{c(1)}> K_n\}}](1+\OPS(1))=\oP(1).
\end{align}%
We have used Lemma \ref{lem:1+alphaConditionalMoment} in the second inequality, and Lemma \ref{lem:expectation_service_times_ordering} with $f(x) = x^{1+\alpha}\mathds 1_{\{x^{1+\alpha} > K_n\}}$ in the third. The right-most term in \eqref{eq:size_biased_service_times_unif_convergence_greater_Kn_second} is $\oP(1)$ as $n\rightarrow\infty$ by the strong Law of Large Numbers. Note that this side of the bound does not impose additional conditions on $K_n$, so that, if we take $K_n= n^{c}$, it is sufficient that $c<\frac{1}{3\alpha}$, with the convention that $\frac{1}{0}=\infty$.
\end{proof}

%
%

We conclude this section with a technical lemma concerning error terms  in the computations of quadratic variations. Denote the density (resp.~distribution function) of a rate $\lambda$ exponential random variable by $\fE(\cdot)$ (resp.~$\FE(\cdot)$):
\begin{lemma}\label{lem:n_squared_err_term}%
We have
\begin{equation}\label{eq:n_squared_err_term_statement}%
\ES\Big[\sum_{h,q\in[n]}\Big\vert\FE \Big(\frac{S_{c(i)}S_h^{\alpha}}{n}\Big) - \frac{\lambda S_{c(i)}S_h^{\alpha}}{n}\Big\vert \Big\vert \FE \Big( \frac{S_{c(i)}S_q^{\alpha}}{n}\Big) - \frac{\lambda S_{c(i)}S_q^{\alpha}}{n}\Big\vert \mid \mathcal F_{i-1}\Big] = o_{\mathbb P}(1)
\end{equation}%
uniformly in $i=O(n^{2/3})$.
\end{lemma}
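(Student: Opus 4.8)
The plan is to first collapse the double sum into the square of a single sum. Since $\FE(y)=1-\e^{-\lambda y}\le\lambda y$ for every $y\ge0$, each factor $\FE(\tfrac{S_{c(i)}S_h^{\alpha}}{n})-\tfrac{\lambda S_{c(i)}S_h^{\alpha}}{n}$ is nonpositive, so the absolute values in \eqref{eq:n_squared_err_term_statement} are $\tfrac{\lambda S_{c(i)}S_h^{\alpha}}{n}-\FE(\tfrac{S_{c(i)}S_h^{\alpha}}{n})\ge0$ and the summand equals $g(S_{c(i)})^2$, where
\[
g(s):=\sum_{h\in[n]}\Big(\frac{\lambda sS_h^{\alpha}}{n}-\FE\Big(\frac{sS_h^{\alpha}}{n}\Big)\Big)\ge0 .
\]
The left-hand side of \eqref{eq:n_squared_err_term_statement} is therefore $\ES[g(S_{c(i)})^2\mid\mathcal F_{i-1}]$, which by Lemma \ref{lem:size_biased_reordering} equals $(\sum_{j\nin\served{i-1}}S_j^{\alpha})^{-1}\sum_{j\nin\served{i-1}}g(S_j)^2S_j^{\alpha}$. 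Exactly as in the proof of Lemma \ref{lem:expectation_service_times_ordering}, for $i=O(n^{2/3})$ one has $\sum_{j\nin\served{i-1}}S_j^{\alpha}\ge\sum_{j=1}^{\lfloor pn\rfloor}S_{(j)}^{\alpha}\ge c'n$ almost surely for $n$ large, with $p\in(0,1)$ and $c'>0$ deterministic; dropping $\served{i-1}$ from the numerator then gives the $i$-free bound
\[
\ES\big[g(S_{c(i)})^2\mid\mathcal F_{i-1}\big]\le\frac{1}{c'n}\sum_{j\in[n]}g(S_j)^2S_j^{\alpha},
\]
valid uniformly in $i=O(n^{2/3})$.

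Next I would estimate $g$ via the two elementary inequalities $0\le\lambda y-\FE(y)\le\tfrac12(\lambda y)^2$ for $y\ge0$ (the second is equivalent to $\e^{-u}\le1-u+\tfrac12u^2$, $u\ge0$). Summing over $h$ yields $g(s)\le\frac{\lambda s}{n}\sum_hS_h^{\alpha}$ and also $g(s)\le\frac{\lambda^2s^2}{2n^2}\sum_hS_h^{2\alpha}$. Since $2\alpha\le2\le2+\alpha$ we have $\E[S^{\alpha}],\E[S^{2\alpha}]<\infty$, so the strong law of large numbers gives $\tfrac1n\sum_hS_h^{\alpha}\to\E[S^{\alpha}]$ and $\tfrac1n\sum_hS_h^{2\alpha}\to\E[S^{2\alpha}]$ almost surely; hence there are deterministic constants $A,B>0$ such that, almost surely, $g(s)^2\le A s^2\min\{1,\,Bs^2/n^2\}$ for all $s\ge0$ and all large $n$.

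Finally I would substitute this into the bound above and split the sum at a truncation level $K>0$. On $\{S_j\le K\}$ the minimum is the $Bs^2/n^2$ branch, contributing at most $\frac{AB}{c'n^3}\sum_{j:S_j\le K}S_j^{4+\alpha}\le\frac{ABK^4}{c'n^3}\sum_{j\in[n]}S_j^{\alpha}=O(K^4/n^2)$, which vanishes as $n\to\infty$ for each fixed $K$. On $\{S_j>K\}$ the minimum is $1$, contributing at most $\frac{A}{c'}\cdot\frac1n\sum_{j\in[n]}S_j^{2+\alpha}\mathds 1_{\{S_j>K\}}\to\frac{A}{c'}\E[S^{2+\alpha}\mathds 1_{\{S>K\}}]$ almost surely. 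Since neither contribution depends on $i$, this gives $\limsup_{n\to\infty}\sup_{i=O(n^{2/3})}\ES[g(S_{c(i)})^2\mid\mathcal F_{i-1}]\le\frac{A}{c'}\E[S^{2+\alpha}\mathds 1_{\{S>K\}}]$ almost surely for every $K$; letting $K\to\infty$ and using $\E[S^{2+\alpha}]<\infty$ proves \eqref{eq:n_squared_err_term_statement}. The case $\alpha=0$ is identical, with the law of large numbers replacing Lemma \ref{lem:expectation_service_times_ordering} in the lower bound on the denominator.

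The step I expect to be the crux is the moment bookkeeping just carried out: bounding \emph{both} exponential-approximation factors by the quadratic estimate would force the stronger condition $\E[S^{4+\alpha}]<\infty$, which is not assumed. The interpolation between the linear bound $\lambda y-\FE(y)\le\lambda y$ and the quadratic bound $\lambda y-\FE(y)\le\tfrac12(\lambda y)^2$ through the cutoff $K$ is precisely what confines the fourth power of $S_j$ to the bounded set $\{S_j\le K\}$, so that only the $(2+\alpha)$-th moment is used globally.
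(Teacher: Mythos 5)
Your proof is correct, and it resolves the same difficulty as the paper's proof but by a different implementation. Both arguments turn on the point you identify at the end: bounding both error factors by the quadratic estimate $\lambda y-\FE(y)\le\tfrac12(\lambda y)^2$ would require $\E[S^{4+\alpha}]<\infty$, so one must interpolate between the linear and the quadratic bounds. The paper interpolates by using the single fractional-power bound $\vert\lambda y-\FE(y)\vert\le Cy^{1+\varepsilon}$ with $0<\varepsilon<\min\{1,2/\alpha\}$, which produces $\ES[S_{c(i)}^{2+2\varepsilon}\mid\mathcal F_{i-1}]$; this is then controlled by peeling off $\max_{j\in[n]}S_j^{2\varepsilon}=\oP(n^{2\varepsilon/(2+\alpha)})$ via Lemma \ref{lem:max_iid_random_variables} and applying Lemma \ref{lem:1+alphaConditionalMoment} to the remaining second moment. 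You instead observe that the double sum factors into $g(S_{c(i)})^2$, compute the conditional expectation explicitly through Lemma \ref{lem:size_biased_reordering}, lower-bound the denominator by the order-statistics argument from the proof of Lemma \ref{lem:expectation_service_times_ordering}, and carry out the interpolation by a truncation at level $K$ (quadratic bound on $\{S\le K\}$, linear bound on $\{S>K\}$). Your route is more elementary and self-contained --- it bypasses Lemmas \ref{lem:max_iid_random_variables} and \ref{lem:1+alphaConditionalMoment} entirely and yields an almost-sure bound given the service times, slightly stronger than the stated $o_{\mathbb P}(1)$ --- while the paper's version is shorter on the page because it delegates to lemmas already established. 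One cosmetic remark: what you call ``the summand'' is really the whole double sum, which equals $g(S_{c(i)})^2$ because the sum over the pair $(h,q)$ factors; since that is exactly what you use, nothing in the argument is affected.
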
%
\begin{proof}%
%
Since $\vert\FE (x) - x\vert = O(x^2)$, the bound $\vert\lambda S_{c(i)}S_h^{\alpha}/n-\FE(S_{c(i)}S_h^{\alpha}/n)\vert \leq C(S_{c(i)}S_h^{\alpha}/n)^{1+\varepsilon}$ holds almost surely for $0<\varepsilon<1$ and $C>0$, which gives
\begin{align}%
\lambda^2\sum_{h,q\in[n]}\ES\Big[\Big( \frac{S_{c(i)}S_h^{\alpha}}{n}\Big)^{1+\varepsilon} \Big(  \frac{S_q^{\alpha}S_{c(i)}}{n}\Big)^{1+\varepsilon} \mid \mathcal F_{i-1}\Big]= \frac{\lambda^2}{n^{2+2\varepsilon}}\sum_{h,q\in[n]} \ES[ S_{c(i)}^{2+2\varepsilon} \mid \mathcal F_{i-1}]  S_h^{\alpha(1+\varepsilon)}   S_q^{\alpha(1+\varepsilon)}
\end{align}%
Therefore,
\begin{align}\label{eq:n_squared_err_term_final}%
\lambda^2&\sum_{h,q\in[n]}\ES\Big[\Big( \frac{S_{c(i)}S_h^{\alpha}}{n}\Big)^{1+\varepsilon} \Big(  \frac{S_q^{\alpha}S_{c(i)}}{n}\Big)^{1+\varepsilon}\mid \mathcal F_{i-1}\Big] \nnl
&\leq \frac{\lambda^2}{n^{2+2\varepsilon}}\max_{j\in[n]}S_j^{2\varepsilon}\ES[ S_{c(i)}^{2} \mid \mathcal F_{i-1}]\sum_{h,q\in[n]} S_h^{\alpha(1+\varepsilon)}  S_q^{\alpha(1+\varepsilon)} \nnl
&\leq \frac{\lambda^2\E[S^{2+\alpha}]}{\E[S^{\alpha}]}\frac{\max_{j\in[n]}S_j^{2\varepsilon}}{n^{2\varepsilon}}\frac{1}{n^2}\sum_{h,q\in[n]} S_h^{\alpha(1+\varepsilon)}  S_q^{\alpha(1+\varepsilon)} +\oP(1),
\end{align}%
where in the last step we used Lemma \ref{lem:1+alphaConditionalMoment}. Note that, since $\mathbb E[S^{2+\alpha}]<\infty$, by Lemma \ref{lem:max_iid_random_variables} $\max_{j\in[n]}S_j^{2\varepsilon} = o_{\mathbb P}(n^{2\varepsilon/(2+\alpha)})$.
The right-most term in \eqref{eq:n_squared_err_term_final} then tends to zero as $n$ tends to infinity as long as $0<\varepsilon<\min\{1,2/\alpha\}$.
\end{proof}%

\section{Proving the scaling limit}\label{sec:proof_main_result}
We first establish some preliminary estimates on $N_n(\cdot)$ that will \bl{be} crucial for the proof of convergence. We will upper bound the process $N_n(\cdot)$ by a simpler process $N_n^{\sss U}(\cdot)$ in such a way that the increments of $N_n^{\sss U}(\cdot)$ almost surely dominate the increments of $N_n(\cdot)$. We also show that, after rescaling, $N_n^{\sss U}(\cdot)$ converges in distribution to $W(\cdot)$. The process $N_n^{\sss U}(\cdot)$ is defined as $N_n^{\sss U}(0) = N_n(0)$, and
\begin{equation}\label{eq:pre_reflection_queue_length_process_upper_bound}%
N_n^{\sss U}(k) = N_n^{\sss U}(k-1) + A^{\sss U}_n(k) -1,
\end{equation}%
where
\begin{equation}\label{eq:arrivals_upper_bound}%
A^{\sss U}_n(k) = \sum_{i\nin\served{k}} \mathds 1 _{\{T_i \leq c_{n,\beta}S_{c(k)}/n\}},
\end{equation}%
with
\begin{equation}
c_{n,\beta}=1+\beta n^{-1/3},
\end{equation}%
and
\begin{equation}%
T_{i} \sr{\mathrm d}{=} \textrm{exp}_i(\lambda S_i^{\alpha}).
\end{equation}%
An interpretation of the process $N_n^{\sss U}(\cdot)$ is that customers are not removed from the pool of potential customers until they have been served. Therefore, a customer could potentially join the queue more than once. We couple the processes $N_n(\cdot)$ and $N_n^{\sss U}(\cdot)$ as follows. Consider a sequence of arrival times $(T_i)_{i=1}^{\infty}$ and of service times $(S_i)_{i=1}^{\infty}$, then define $A_n(\cdot)$ as  \eqref{eq:number_arrivals_during_one_service} and $A_n^{\sss U}(\cdot)$ as  \eqref{eq:arrivals_upper_bound}. With this coupling we have that, almost surely,
\begin{equation}%
A_n(k) \leq A_n^{\sss U}(k) \qquad \forall~k \geq 1.
\end{equation}%
Consequently,
\begin{equation}\label{eq:pre_reflection_process_domination}%
N_n (k) \leq N_n^{\sss U}(k)\qquad\forall k\geq 0,
\end{equation}%
and
\begin{equation}\label{eq:reflected_process_domination}%
Q_n(k) = \phi(N_n) (k) \leq\phi(N_n^{\sss U})(k) =: Q_n^{\sss U}(k) \qquad\forall k\geq 0,
\end{equation}%
almost surely.

While in general only the upper bounds \eqref{eq:pre_reflection_process_domination} and \eqref{eq:reflected_process_domination} hold, the processes $N_n(\cdot)$ and $N_n^{\sss U}(\cdot)$ (resp.~$Q_n(\cdot)$ and $Q_n^{\sss U}(\cdot)$) turn out to be, very close to each other. We start by proving results for $N_n^{\sss U}(\cdot)$ and $Q_n^{\sss U}(\cdot)$ because they are easier to treat, and only then we are able to prove that identical results hold for $N_n(\cdot)$ and $Q_n(\cdot)$.

In fact, we introduce the upper bound $N_n^{U}(\cdot)$ to deal with the complicated index set for the summation in \eqref{eq:number_arrivals_during_one_service}. The difficulty arises as follows: in order to estimate $N_n(\cdot)$ one has to estimate $A_n(\cdot)$. To do this, one has to separately (uniformly) bound each element in the sum, and also estimate the number of elements in the sum. The first goal is accomplished, for example, through Lemma \ref{lem:1+alphaConditionalMoment}, while for the second the crude upper bound $n$ is not strict enough. However, estimating $\vert \nu_k\vert$ requires an estimate on $N_n(\cdot)$ itself, as \eqref{eq:cardinality_nu} shows. To solve this circularity, we introduce a bootstrap argument: first, we upper bound $N_n(\cdot)$ and we obtain estimates on the upper bound, from this follows an estimate on $\vert \nu_k\vert$, and this in turn allows us to estimate $N_n(\cdot)$.

This technique can be applied to solve a recently found technical issue in the proof of the main result of \cite{bhamidi2010scaling}. The authors in \cite{bhamidi2010scaling} prove convergence of a process which upper bounds the exploration process of the graph. Therefore, their main result is analogous to Theorem \ref{MainTheorem_U}. However, a further step is required to complete the proof of convergence of the exploration process, and this is provided by our approach.

\begin{theorem}[Convergence of the upper bound]\label{MainTheorem_U}
\begin{equation}
n^{-1/3}N_n^{\sss U}( t n^{2/3})\stackrel{\mathrm{d}}{\rightarrow} W(t)\qquad\mathrm{in}~(\mathcal D, J_1)~\mathrm{as}~n\rightarrow\infty,
\end{equation}
where $W(\cdot)$ is the diffusion process in \eqref{eq:main_theorem_delta_G_1_diffusion_definition}.
In particular,
\begin{equation}%
n^{-1/3}\phi(N_n^{\sss U})( t n^{2/3})\stackrel{\mathrm{d}}{\rightarrow} \phi(W)(t)\qquad\mathrm{in}~(\mathcal D, J_1)~\mathrm{as}~n\rightarrow\infty.
\end{equation}%
\end{theorem}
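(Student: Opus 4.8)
The plan is to prove a functional central limit theorem for the rescaled process $\bar N_n^{\sss U}(t) := n^{-1/3}N_n^{\sss U}(tn^{2/3})$ by decomposing its increments into a predictable drift part and a martingale part, identifying the limiting deterministic drift as $q + \beta t - \lambda\frac{\E[S^{1+2\alpha}]}{2\E[S^{\alpha}]}t^2$ and the limiting martingale as $\sigma B(t)$, and then establishing tightness. First I would write, using \eqref{eq:pre_reflection_queue_length_process_upper_bound},
\begin{equation*}
N_n^{\sss U}(tn^{2/3}) = N_n(0) + \sum_{k=1}^{tn^{2/3}}\big(A_n^{\sss U}(k) - \ES[A_n^{\sss U}(k)\mid\mathcal F_{k-1}]\big) + \sum_{k=1}^{tn^{2/3}}\big(\ES[A_n^{\sss U}(k)\mid\mathcal F_{k-1}] - 1\big),
\end{equation*}
so that the first sum is a martingale and the second is the compensator. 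The conditional expectation is computed by conditioning on $S_{c(k)}$ and on $\served{k}$: since the $T_i$ are independent $\Exp(\lambda S_i^\alpha)$ given the service times, $\ES[A_n^{\sss U}(k)\mid\mathcal F_{k-1},S_{c(k)}] = \sum_{i\notin\served{k}}\FE(c_{n,\beta}S_{c(k)}S_i^\alpha/n)$, where $\FE$ is the rate-$\lambda$ exponential cdf. Replacing $\FE(x)$ by $\lambda x$ costs an error controlled by Lemma \ref{lem:n_squared_err_term} (or its one-index analogue), so up to $\oP$-errors the compensator increment equals $\lambda c_{n,\beta}S_{c(k)}\frac{1}{n}\sum_{i\notin\served{k}}S_i^\alpha$. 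Then $\frac1n\sum_{i\notin\served{k}}S_i^\alpha = \frac1n\sum_{i\in[n]}S_i^\alpha - \frac1n\sum_{i\in\served{k}}S_i^\alpha \to \E[S^\alpha] - \lambda\frac{\E[S^{1+2\alpha}]}{\E[S^\alpha]}\cdot\frac{k}{n} + \oP(\cdot)$ by the law of large numbers together with Lemma \ref{lem:unif_convergence_random_sums}; here one also needs $\sum_{i\in\served{k}}S_i^\alpha = \sum_{i=1}^k S_{c(i)}^\alpha$ and Lemma \ref{lem:1+alphaConditionalMoment} with $\gamma=\alpha-1$ gives $\ES[S_{c(i)}^\alpha\mid\mathcal F_{i-1}]\to \E[S^{2\alpha}]/\E[S^\alpha]$, which does not immediately match, so in fact the cleaner route is to keep $S_{c(k)}$ in front and use Lemma \ref{lem:size_biased_service_times_unif_convergence} directly on $\sum_{i=1}^j S_{c(i)}^{1+\alpha}$, whose limiting slope $\E[S^{1+2\alpha}]/\E[S]$ (noting $\lambda\E[S^{1+\alpha}]=1$ in the limit by \eqref{eq:crit}, so $\E[S^{1+2\alpha}]/\E[S] = \lambda\E[S^{1+2\alpha}]\E[S^{1+\alpha}]/\E[S]$) matches after using $\lambda\E[S^\alpha]\E[S^{1+\alpha}]$ bookkeeping. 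Summing over $k\le tn^{2/3}$ and multiplying by $n^{-1/3}$ produces, after a Riemann-sum argument, the linear term $\beta t$ (from $c_{n,\beta}-1 = \beta n^{-1/3}$ against the leading $\lambda\E[S^\alpha]\E[S^{1+\alpha}]n^{2/3}\cdot n^{-1/3}$ and the $-1$ per step, recalling $\rho_n = 1+\beta n^{-1/3}+\oP(n^{-1/3})$) and the quadratic term $-\lambda\frac{\E[S^{1+2\alpha}]}{2\E[S^\alpha]}t^2$ (from summing the linear-in-$k$ depletion term, $\sum_{k\le tn^{2/3}}\frac{k}{n}\sim \frac{t^2}{2}n^{1/3}$).

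For the martingale part, I would compute the predictable quadratic variation. Conditionally, $A_n^{\sss U}(k)$ is a sum of independent indicators given $S_{c(k)}$, so $\mathrm{Var}_{\sss S}(A_n^{\sss U}(k)\mid\mathcal F_{k-1},S_{c(k)}) = \sum_{i\notin\served{k}}\FE(c_{n,\beta}S_{c(k)}S_i^\alpha/n)(1-\FE(\cdots))$, and to leading order this is $\lambda S_{c(k)}\frac1n\sum_i S_i^\alpha \approx \lambda\E[S^\alpha]S_{c(k)}$; there is also a contribution from the conditional variance of the mean given $S_{c(k)}$, of order $(S_{c(k)}/n\cdot\sum_i S_i^\alpha)^2$ summed over $k$, which is $\OP(n^{2/3})\cdot\OP(1)$ and hence negligible after dividing by $n^{2/3}$ only if one is careful — actually it is of the same order and contributes, but a short computation shows it is $\OP(n^{-1/3}\sum_k S_{c(k)}^2) = \OP(1)$ after the $n^{-2/3}$ scaling of the bracket, i.e.\ non-negligible; the correct accounting gives predictable bracket $n^{-2/3}\sum_{k\le tn^{2/3}}\big(\lambda\E[S^\alpha] + \lambda^2\E[S^\alpha]^2\big)\ldots$ — this is precisely where one invokes Lemma \ref{lem:1+alphaConditionalMoment} with $\gamma=1$ to replace $\ES[S_{c(k)}\mid\mathcal F_{k-1}]$ and $\gamma$ such that $1+\gamma=2$, i.e.\ $\ES[S_{c(k)}^2\mid\mathcal F_{k-1}]\to\E[S^{2+\alpha}]/\E[S^\alpha]$, giving the total $\sigma^2 = \lambda^2\E[S^\alpha]\E[S^{2+\alpha}]$ as stated. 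Then the martingale FCLT (e.g.\ \cite[Theorem VIII.3.22]{...} — I would cite a standard reference) applies once the conditional Lindeberg condition is checked: the jumps are $n^{-1/3}(A_n^{\sss U}(k) - \ES[\cdot\mid\mathcal F_{k-1}])$, and $A_n^{\sss U}(k)$ is stochastically dominated by a $\Poi$-like variable with conditional mean $\OP(S_{c(k)})$, so $\ES[(A_n^{\sss U}(k))^{2+\delta}\mid\mathcal F_{k-1}]$ is controlled using $\E[S^{2+\alpha}]<\infty$ and Lemma \ref{lem:1+alphaConditionalMoment}; summing the $(2+\delta)$-moments and dividing by $n^{(2+\delta)/3}$ goes to $0$.

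The main obstacle will be controlling the compensator and quadratic-variation sums \emph{uniformly} over $j\le tn^{2/3}$ rather than just pointwise — i.e.\ upgrading the law-of-large-numbers statements to uniform convergence of the relevant random sums. This is exactly what Lemmas \ref{lem:unif_convergence_random_sums}, \ref{lem:1+alphaConditionalMoment} (the $L^1$ statement \eqref{eq:1+alphaConditionalMoment_L1_convergence}), and \ref{lem:size_biased_service_times_unif_convergence} are built for, and the plan is to reduce every error term to a finite combination of expressions to which one of these three lemmas applies, plus the truncation trick (split at $S_{c(i)}^{1+\alpha}\le K_n$ vs.\ $>K_n$) that recurs throughout Section \ref{sec:preliminaries}. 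A secondary subtlety is that the index set $\served{k}$ is random and correlated with the summands; but since $|\served{k}| = k = O(n^{2/3})$, Lemma \ref{lem:unif_convergence_random_sums} handles the removal of these $O(n^{2/3})$ terms with a uniform $\oP(1)$ error after the $n^{-1}$ normalization. Finally, tightness in $(\mathcal D,J_1)$ follows from the martingale-plus-finite-variation decomposition: the finite-variation part converges uniformly to a continuous (deterministic) limit, and tightness of the martingale part is automatic from convergence of its predictable bracket together with the Lindeberg condition (Aldous' criterion, or directly the martingale FCLT). The second displayed convergence, for $\phi(N_n^{\sss U})$, is then immediate from the first by the continuous-mapping theorem, since the reflection map $\phi$ is continuous on $(\mathcal D,J_1)$ at paths with no negative jumps — and $W$ has continuous paths, so the limit lies in the continuity set a.s.
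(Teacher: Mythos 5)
Your proposal follows essentially the same route as the paper: the martingale-plus-compensator decomposition of $N_n^{\sss U}$, linearization of the exponential cdf with a Taylor error controlled as in Lemma \ref{lem:n_squared_err_term}, identification of the drift via the heavy-traffic condition \eqref{eq:crit} and the size-biased sum $\sum_{i\leq j}S_{c(i)}^{1+\alpha}$ of Lemma \ref{lem:size_biased_service_times_unif_convergence}, computation of the predictable bracket via Lemma \ref{lem:1+alphaConditionalMoment} with $\gamma=1$, and Poisson domination of $A_n^{\sss U}(k)$ for the jump control, followed by a martingale FCLT and continuity of the reflection map. One caveat: the conditional Lindeberg condition should not be verified through a $(2+\delta)$-moment bound, since $\ES[(A_n^{\sss U}(k))^{2+\delta}\mid\mathcal F_{k-1}]$ would require $\E[S^{2+\delta+\alpha}]<\infty$, which is not assumed; the paper instead checks the maximal-jump conditions of the Ethier--Kurtz FCLT using only the truncated second-moment estimate $\ES[\sup_{k\leq tn^{2/3}}S_{c(k)}^2]=\oP(n^{2/3})$ (Lemma \ref{lem:exp_sup_S_squared_oP_n_two_thirds}), which is exactly the truncation trick you mention elsewhere and suffices under $\E[S^{2+\alpha}]<\infty$.
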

The next section is dedicated to the proof of Theorem \ref{MainTheorem_U}.
}
\subsection{Convergence of the upper bound}\label{sec:proof_thm_U}
We use a classical martingale decomposition followed by a martingale FCLT.
The process $N_n^{\sss U}(\cdot)$ in \eqref{eq:pre_reflection_queue_length_process_upper_bound} can be decomposed as $N_n^{\sss U}(k) = M_n^{\sss U}(k) + C_n^{\sss U}(k)$, where $M_n^{\sss U}(\cdot)$ is a martingale and $C_n^{\sss U}(\cdot)$ is a drift term, as follows:
\begin{align}%
M^{\sss U}_n(k) &= \sum_{i=1}^k(A^{\sss U}_n(i) - \ES[A^{\sss U}_n(i) \mid \mathcal F_{i-1}]), \nnl
C^{\sss U}_n(k) &=  \sum_{i=1}^k (\ES[A^{\sss U}_n(i) \mid \mathcal F_{i-1}]-1).
\end{align}%
 Moreover, $(M^{\sss U}_n(k))^2$ can  be written as $(M^{\sss U}_n(k))^2 = Z^{\sss U}_n(k) + B^{\sss U}_n(k)$ with $Z^{\sss U}_n(k)$  a martingale and $B^{\sss U}_n(k)$ the compensator, or quadratic variation, of $M^{\sss U}_n(k)$ given by
\begin{equation}
B^{\sss U}_n(k) = \sum_{i=1}^k(\ES[(A^{\sss U}_n(i))^2 \mid \mathcal F_{i-1}] - \ES[A^{\sss U}_n(i) \mid \mathcal F_{i-1}]^2).
\end{equation}%

In order to prove convergence of $N^{\sss U}_n(\cdot)$ we separately prove convergence of $C^{\sss U}_n(\cdot)$ and of $M^{\sss U}_n(\cdot)$. We prove the former directly, and the latter by applying the martingale FCLT \cite[Theorem~7.1.4]{MarkovProcesses}. For this, we need to verify the following conditions:
\begin{enumerate}[label=(\roman*)]
\item \label{size_biased:eq:first_condition_martingale_FCLT} $\sup_{t\leq \bar t} \vert n^{-1/3}C_n^{\sss U}(tn^{2/3})- \beta t + \lambda\frac{\E[S^{1+2\alpha}]}{2\E[S^{\alpha}]} t^2\vert\stackrel{\mathbb P}{\longrightarrow}0,\qquad \forall \bar t\in \mathbb R^+$;
\item \label{size_biased:eq:second_condition_martingale_FCLT}$ n^{-2/3}B_n^{\sss U}(tn^{2/3})\stackrel{\mathbb  P}{\longrightarrow}  \sigma^2  t,\qquad \forall t\in \mathbb R^+$;
\item \label{size_biased:eq:third_condition_martingale_FCLT}$\lim_{n\rightarrow\infty}n^{-2/3} \ES[\sup_{t\leq\bar t} \vert B_n^{\sss U}(tn^{2/3})-B_n^{\sss U}(tn^{2/3}-)\vert]=0,\qquad \forall \bar t\in \mathbb R^+$;
\item \label{size_biased:eq:fourth_condition_martingale_FCLT}$\lim_{n\rightarrow\infty} n^{-2/3} \ES[\sup_{t\leq\bar t} \vert M_n^{\sss U}(tn^{2/3})-M_n^{\sss U}(tn^{2/3}-)\vert^2]=0,\qquad \forall \bar t\in \mathbb R^+$.
\end{enumerate}
\subsubsection{Proof of \ref{size_biased:eq:first_condition_martingale_FCLT} for the upper bound} \label{sec:drift_of_upper_bounding_process_converges_to_zero}
First we obtain an explicit expression for $\mathbb E[A_n^{\sss U}(i) \mid \mathcal F_{i-1}]$, as
\begin{align}\label{eq:conditioned_arrivals_first_eq}%
\ES[A_n^{\sss U}(i) \mid \mathcal F_{i-1}] &= \sum_{j\nin\served{i-1}}\PS(c(i)= j\mid\mathcal F_{i-1})\sum_{l\nin\served{i-1}\cup \{j\}}F_{\sss E}\Big(\frac{c_{n,\beta}S_jS_l^{\alpha}}{n}\Big)\\
&= \sum_{j\nin\served{i-1}}\PS (c(i) = j \mid \mathcal F_{i-1})\sum_{l=1}^n \frac{c_{n,\beta}\lambda S_jS_l^{\alpha}}{n} \nnl
&\quad - \sum_{j\nin\served{i-1}}\PS(c(i) = j \mid \mathcal F_{i-1})\sum_{l\in\served{i-1}\cup \{j\}} \frac{c_{n,\beta}\lambda S_jS_l^{\alpha}}{n}\nnl
&\quad + \sum_{j\nin\served{i-1}}\PS(c(i)= j \mid \mathcal F_{i-1})\sum_{l\nin\served{i-1}\cup \{j\}} \Big( F_{\sss E}\Big(\frac{c_{n,\beta}S_jS_l^{\alpha}}{n}\Big)  - \frac{c_{n,\beta}\lambda S_jS_l^{\alpha}}{n}\Big)\notag
\end{align}%
The third term is an error term. Indeed, for some $\zeta_n \in  [0,S_{c(i)}S_l/n]$,
\begin{align}\label{eq:taylor_error_expanding_FT}%
\ES&\Big[\Big\vert\sum_{l\nin\served{i-1}\cup{\{j\}}}\! \FE \Big(\frac{S_{c(i)}S_l^{\alpha}}{n}\Big) - \frac{\lambda S_{c(i)}S_l^{\alpha}}{n}\Big\vert \mid \mathcal F_{i-1}\Big] \nnl
&\leq \sum_{l\in[n]}\ES\Big[\Big\vert \FE \Big(\frac{S_{c(i)}S_l^{\alpha}}{n}\Big)- \frac{\lambda S_{c(i)}S_l^{\alpha}}{n}\Big\vert \mid \mathcal F_{i-1}\Big]\nnl
&= \frac{1}{2n^2} \ES[\vert\FE''(\zeta_n)S_{c(i)}^2\vert \mid \mathcal F_{i-1}]\sum_{l\in[n]}S_l^{2\alpha}\leq \frac{\lambda^2}{2n^2}\ES[S_{c(i)}^2 \mid \mathcal F_{i-1}]\sum_{l\in[n]}S_l^{2\alpha},
\end{align}%
since $\vert \FE''(x)\vert \leq \lambda ^2$ for all $x\geq 0$. By Lemma \ref{lem:1+alphaConditionalMoment} this can be bounded by
\begin{align}\label{eq:F_taylor_expansion_error}%
\frac{\lambda^2}{2n^2}(C_n+ o_{\mathbb P}(1))\sum_{l\in[n]}S_l^{2\alpha},
\end{align}%
where $C_n$ is bounded w.h.p.~and the $o_{\mathbb P}(1)$ term is uniform in $i=O(n^{2/3})$. Therefore, the third term in \eqref{eq:conditioned_arrivals_first_eq} is $o_{\mathbb P}(n^{-1/3})$.
The remaining terms in \eqref{eq:conditioned_arrivals_first_eq} can be simplified as
\begin{align}\label{eq:ConditionedArrivalsFirst_U}%
\ES&[A^{\sss U}_n(i) \mid \mathcal F_{i-1}] -1= \sum_{j\nin\served{i-1}}\PS (c(i) = j \mid \mathcal F_{i-1})c_{n,\beta}\lambda S_j\frac{\sum_{l\in[n]} S_l^{\alpha}}{n}\nnl
&\quad -  \sum_{j\nin\served{i-1}}\PS(c(i) = j \mid \mathcal F_{i-1})\sum_{l\in\served{i-1}} \frac{c_{n,\beta}\lambda S_jS_l^{\alpha}}{n} \nnl
&\quad- c_{n,\beta}\lambda\sum_{j\nin\served{i-1}}\PS(c(i) = j\mid \mathcal F_{i-1})\frac{S_j^{1+\alpha}}{n} - 1 + o_{\mathbb P}(n^{-1/3})\nnl
&=\Big(c_{n,\beta}\lambda\frac{\sum_{l\in[n]} S_l^{\alpha}}{n}\mathbb E[S_{c(i)}\mid\mathcal F_{i-1}]-1\Big) - c_{n,\beta}\ES[S_{c(i)} \mid \mathcal F_{i-1}]\sum_{l\in\served{i-1}}\lambda\frac{S_l^{\alpha}}{n} \nnl
&\quad- c_{n,\beta}\frac{\lambda}{n}\ES[S_{c(i)}^{1+\alpha}\mid\mathcal F_{i-1}]+ o_{\mathbb P}(n^{-1/3}).
\end{align}%
For the first term of \eqref{eq:ConditionedArrivalsFirst_U},  using $\frac{c}{a-b}= \frac{c}{a}+ \frac{c}{a-b}\frac{b}{a}$, with $a = \sum_{l\in[n]}S_l^{\alpha}$ and $b = \sum_{l \in \served{i-1}}S_l^{\alpha}$,
\begin{align}\label{eq:ConditionedArrivalsFirstTerm}%
c_{n,\beta}\lambda&\frac{\sum_{l=1}^n S_l^{\alpha}}{n}\ES[S_{c(i)} \mid \mathcal F_{i-1}]-1  \nnl
&=c_{n,\beta}\lambda\frac{\sum_{l\in[n]} S_l^{\alpha}}{n} \sum_{j\nin \served{i-1}} \frac{S_j^{1+\alpha}}{\sum_{l\in [n]}S_l^{\alpha}}-1 + c_{n,\beta}\lambda\frac{\sum_{l\in[n]} S_l^{\alpha}}{n}\sum_{j\nin\served{i-1}}\frac{S_j^{1+\alpha}}{\sum_{l\nin\served{i-1}}S_l^{\alpha}}\frac{\sum_{s\in\served{i-1}}S_s^{\alpha}}{\sum_{l\in[n]}S_l^{\alpha}}\nnl
&= \Big(c_{n,\beta}\frac{\lambda}{n} \sum_{j\nin \served{i-1}} S_j^{1+\alpha} - 1 \Big)+ c_{n,\beta}\ES[S_{c(i)}\mid  \mathcal F_{i-1}]\sum_{s\in\served{i-1}}\lambda\frac{S_s^{\alpha}}{n}.
\end{align}%
Note that the right-most term in \eqref{eq:ConditionedArrivalsFirstTerm} and the second term in \eqref{eq:ConditionedArrivalsFirst_U} cancel out.  This cancellation is what makes the analysis of $N_n^{\sss U}(\cdot)$ considerably easier than the analysis of $N_n(\cdot)$.

Moreover, Lemma \ref{lem:1+alphaConditionalMoment} implies that the third term in \eqref{eq:ConditionedArrivalsFirst_U} is also $\oP(n^{-1/3})$.
\eqref{eq:conditioned_arrivals_first_eq} is then simplified to
\begin{align}\label{eq:conditioned_arrivals_final_expression}%
\ES[A^{\sss U}_n(i) \mid  \mathcal F_{i-1}] -1 &= c_{n,\beta}\frac{\lambda}{n} \sum_{j\nin \served{i-1}} S_j^{1+\alpha} - 1
+ o_{\mathbb P}(n^{-1/3})\nnl
&= \Big(c_{n,\beta}\frac{\lambda}{n} \sum_{j=1}^n S_j^{1+\alpha} - 1\Big) - c_{n,\beta}\frac{\lambda}{n} \sum_{j\in \served{i-1}} S_j^{1+\alpha}
+ o_{\mathbb P}(n^{-1/3})\nnl
&= \Big(c_{n,\beta}\frac{\lambda}{n} \sum_{j=1}^n S_j^{1+\alpha} - 1\Big) - c_{n,\beta}\frac{\lambda}{n} \sum_{j=1}^{i-1} S_{c(j)}^{1+\alpha}
+ o_{\mathbb P}(n^{-1/3}),
\end{align}%
and the $o_{\mathbb P}(n^{-1/3})$ term is uniform in $i=O(n^{2/3})$. We are now able to compute
\begin{align}%
n^{-1/3}C_n^{\sss U}(tn^{2/3}) &= n^{-1/3}\sum_{i=1}^{tn^{2/3}}(\ES[A_n^{\sss U}(i) \mid \mathcal F_{i-1}] -1) \nnl
&= tn^{1/3} \Big(c_{n,\beta}\frac{\lambda}{n}\sum_{j=1}^n S_j^{1+\alpha} - 1\Big) - c_{n,\beta}\frac{\lambda}{n^{4/3}} \sum_{i=1}^{tn^{2/3}} \sum_{j=1}^{i-1} S_{c(j)}^{1+\alpha} + o_{\mathbb P}(1).
\end{align}%
Note that, since $\E[(S^{1+\alpha})^{\frac{2+\alpha}{1+\alpha}}]<\infty$, by the Marcinkiewicz and Zygmund Theorem \cite[Theorem 2.5.8]{durrett2010probability}, if $\alpha\in(0,1]$,
\begin{equation}%
c_{n,\beta}\frac{\lambda}{n}\sum_{j=1}^n S_j^{1+\alpha} = c_{n,\beta}\lambda\E[S^{1+\alpha}] + \oP(n^{-\frac{1}{2+\alpha}})=1 + \beta n^{-1/3} + \oP(n^{-\frac{1}{2+\alpha}}).
\end{equation}%
For $\alpha = 0$, by a similar result \cite[Theorem 2.5.7]{durrett2010probability}, for all $\varepsilon>0$,
\begin{equation}%
\frac{1}{n}\sum_{j=1}^n S_j = \E[S] + \oP(n^{-1/2}\log(n)^{1/2+\varepsilon}).
\end{equation}%
In particular,
\begin{equation}%
tn^{1/3} \Big(c_{n,\beta}\frac{\lambda}{n}\sum_{j=1}^n S_j^{1+\alpha} - 1\Big) = t(\beta  + \oP(1)).
\end{equation}%
By monotonicity,
\begin{equation}%
\sup_{t\leq T}\Big\vert tn^{1/3} \Big(c_{n,\beta}\frac{\lambda}{n}\sum_{j=1}^n S_j^{1+\alpha} - 1\Big) - \beta t \Big\vert \Pconv 0,
\end{equation}%
so that, for $\alpha\in[0,1]$,
\begin{equation}\label{eq:drift_rescaled_final_expression}%
n^{-1/3}C_n^{\sss U}(tn^{2/3}) = \beta t  - c_{n,\beta}\frac{\lambda}{n^{4/3}} \sum_{i=1}^{tn^{2/3}} \sum_{j=1}^{i-1} S_{c(j)}^{1+\alpha} + o_{\mathbb P}(1).
\end{equation}%
Since $c_{n,\beta} = 1 + O(n^{-1/3})$, the second term in \eqref{eq:drift_rescaled_final_expression} converges uniformly to $-t^2\lambda\E[S^{1+2\alpha}]/2\E[S^{\alpha}]$ by Lemma \ref{lem:size_biased_service_times_unif_convergence}.

\subsubsection{Proof of \ref{size_biased:eq:second_condition_martingale_FCLT} for the upper bound}\label{sec:quadratic_variation_of_upper_bounding_process_converges_to_zero}

Rewrite $B_n^{\sss U}(k)$, for $k = O(n^{2/3})$, as
\begin{align}\label{eq:quadratic_variation_zeroth_U}%
B_n^{\sss U}(k) &= \sum_{i=1}^k(\ES[A_n^{\sss U}(i)^2 \mid \mathcal F_{i-1}] - \ES[A_n^{\sss U}(i)\vert \mathcal F_{i-1}]^2) \nnl
&= \sum_{i=1}^k(\ES[A_n^{\sss U}(i)^2  \mid \mathcal F_{i-1}] - 1) + \OP(kn^{-1/3}),
\end{align}%
where we have used the asymptotics for $\ES[A_n^{\sss U}(i) \mid  \mathcal F_{i-1}]$  in  \eqref{eq:conditioned_arrivals_final_expression}-\eqref{eq:drift_rescaled_final_expression}. Moreover, we can compute $\ES[A_n^{\sss U}(i)^2 \mid \mathcal F_{i-1}]$ as
\begin{align}\label{eq:quadratic_variation_first_U}%
\ES[A_n^{\sss U}(i)^2 \mid \mathcal F_{i-1}] &= \ES\Big[\Big(\sum_{h\nin\served{i}}\mathds{1}_{\{T_{h}\leq c_{n,\beta}S _{c(i)}S_h/n\}}\Big)^2 \mid \mathcal F_{i-1}\Big] \\
&= \ES[A_n^{\sss U}(i) \mid \mathcal F_{i-1}] + \ES\Big[\sum_{h,q\nin \served{i}}\mathds{1}_{\{T_{h}\leq  c_{n,\beta}S _{c(i)}S_h/n\}}\mathds{1}_{\{T_{q}\leq c_{n,\beta} S_{c(i)}S_q/n\}} \mid \mathcal F_{i-1}\Big].\notag
\end{align}%
Again by \eqref{eq:conditioned_arrivals_final_expression}, $\ES[A_n(i) \mid \mathcal F_{i-1}] = 1 + o_{\mathbb P}(1)$, uniformly in $i = O(n^{2/3})$, so that \eqref{eq:quadratic_variation_zeroth_U} simplifies to
\begin{equation}%
B_n(k) = \sum_{i=1}^k \ES\Big[\sum_{h,q\nin \served{i}}\mathds{1}_{\{T_{h}\leq  c_{n,\beta}S _{c(i)}S_h^{\alpha}/n\}}\mathds{1}_{\{T_{q}\leq  c_{n,\beta}S_{c(i)}S_q^{\alpha}/n\}} \mid \mathcal F_{i-1}\Big] + \OP(kn^{-1/3}).
\end{equation}%
We then focus on the second term in \eqref{eq:quadratic_variation_first_U}, which we compute as
\begin{align}\label{eq:BComputations_U}%
\sum_{\substack{h,q\nin \served{i}\\h\neq q}}&\ES[\mathds{1}_{\{T_{h}\leq  c_{n,\beta}S_{c(i)}S_h^{\alpha}/n\}}\mathds{1}_{\{T_{q}\leq  c_{n,\beta}S_{c(i)}S_q^{\alpha}/n\}} \mid \mathcal F_{i-1}]  \\
&= \sum_{j\nin\served{i-1}}\PS(c(i) = j \mid  \mathcal F_{i-1})\sum_{\substack{h,q\nin \served{i-1}\cup \{j\}\\h\neq q}}\ES[\mathds{1}_{\{T_{h}\leq  c_{n,\beta}S_{j}S_h^{\alpha}/n\}}\mathds{1}_{\{T_{q}\leq  c_{n,\beta}S_{j}S_q^{\alpha}/n\}} \mid \mathcal F_{i-1}].\nnl
\end{align}%
By Lemma \ref{lem:n_squared_err_term},
\begin{align}%
\textrm{l.h.s.}~\eqref{eq:BComputations_U}&= \sum_{j\nin\served{i-1}} \frac{S_j^{\alpha}}{\sum_{l\nin\served{i-1}}S_l^{\alpha}} \sum_{\substack{h,q\nin \served{i-1}\cup \{j\}\\h\neq q}}\Big(\frac{c_{n,\beta}^2\lambda^2 S_j^2S_h^{\alpha}S_q^{\alpha}}{n^2} + \oP(n^{-2})\Big)\nnl
&= (c_{n,\beta}\lambda) ^2\ES[S_{c(i)}^2  \mid \mathcal F_{i-1}]\frac{1}{n^{2}}\sum_{\substack{h,q\nin \served{i-1}\cup \{c(i)\}\\h\neq q}}S_h^{\alpha}S_q^{\alpha} + \oP(1)\nnl
&= \frac{(c_{n,\beta}\lambda) ^2}{n^2}\ES[S_{c(i)}^2 \mid \mathcal F_{i-1}]\!\sum_{\substack{1\leq h,q\leq n}}\!S_h^{\alpha}S_q^{\alpha} \nnl
&\qquad- \frac{(c_{n,\beta}\lambda) ^2}{n^2}\ES\Big[S_{c(i)}^2\mkern-9mu\sum_{\substack{h,q\in \served{i-1} \cup \{c(i)\}\\ \cup \{h = q\}}}\mkern-9mu S_h^{\alpha}S_q^{\alpha} \mid \mathcal F_{i-1}\Big] + o_{\mathbb P}(1).\notag
\end{align}%
The leading contribution to $B_n^{\sss U}(k)$ is given by the first term, while the second term is an error term by Lemma \ref{lem:unif_convergence_random_sums}. We have shown that $B_n^{\sss U}(\cdot)$ can be rewritten as
\begin{align}%
B_n^{\sss U}(k) = \Big(\frac{\lambda}{n}\sum_{h\in[n]}S_h^{\alpha}\Big)^2\sum_{i=1}^k  \ES[S_{c(i)}^2  \mid \mathcal F_{i-1}] + \oP(k).
\end{align}%
Thus,
\begin{equation}%
n^{-2/3}B_n^{\sss U}(n^{2/3}u) \stackrel{\mathbb P}{\rightarrow} \lambda^2\mathbb E[S^{\alpha}]\E [S^{2+\alpha}]u,
\end{equation}%
which concludes the proof of \ref{size_biased:eq:second_condition_martingale_FCLT}.
\qed

\subsubsection{Proof of \ref{size_biased:eq:third_condition_martingale_FCLT} for the upper bound}\label{sec:quadratic_variation_jumps_bounding_process_converges_to_zero}
The jumps of $B_n^{\sss U}(k)$ are given by
\begin{align}%
B_n^{\sss U}(i)-B_n^{\sss U}(i-1) &= \ES[A_n^{\sss U}(i)^2 \mid \mathcal F_{i-1}] - \ES[A_n^{\sss U}(i) \mid \mathcal F_{i-1}]^2\nnl
&=\ES\Big[\sum_{\substack{h,q\nin \served{i}\\h\neq q}}\mathds{1}_{\{T_{h}\leq  c_{n,\beta}S_{c(i)}S_h^{\alpha}/n\}}\mathds{1}_{\{T_{q}\leq  c_{n,\beta}S _{c(i)}S_q^{\alpha}/n\}} \mid \mathcal F_{i-1}\Big]\nnl
&\qquad+ \left(\ES[A_n^{\sss U}(i) \mid \mathcal F _{i-1}] - \ES[A_n^{\sss U}(i) \mid \mathcal F_{i-1}]^2\right)
\end{align}%
Since $\ES[A_n^{\sss U}(i) \mid \mathcal F _{i-1}] = 1 + \OP(n^{-1/3})$ for $i= \OP(n^{2/3})$ by  \eqref{eq:conditioned_arrivals_final_expression}, the second term is of order $O_{\mathbb P}(n^{-1/3})$,  uniformly in $i = \OP(n^{2/3})$. The first term was computed in \eqref{eq:BComputations_U}. Therefore,
\begin{align}%
&B_n^{\sss U}(i) - B_n^{\sss U}(i-1) \nnl
&\quad = \frac{(c_{n,\beta}\lambda) ^2}{n^{2}}\ES[S_{c(i)}^2 \mid \mathcal F_{i-1}]\sum_{h,q\in[n]}S_h^{\alpha}S_q^{\alpha} - \frac{(c_{n,\beta}\lambda) ^2}{n^2}\ES\Big[S_{c(i)}^2\mkern-9mu\sum_{\substack{h,q\in \served{i-1}\cup \{c(i)\}\\ \cup \{h = q\}}}\mkern-9mu S_h^{\alpha}S_q^{\alpha} \mid \mathcal F_{i-1}\Big] + \oP(1)\nnl
&\quad \leq \frac{(c_{n,\beta}\lambda) ^2}{n^{2}}\ES[S_{c(i)}^2 \mid \mathcal F_{i-1}]\sum_{h,q\in [n]}S_h^{\alpha}S_q^{\alpha}.
\end{align}%
After rescaling and taking the expectation, we obtain the bound
\begin{equation}\label{eq:proof_of_iii_final_bound_U}
n^{-2/3}\ES[\sup_{i\leq \bar t n^{2/3}}\vert B_n^{\sss U}(i) - B_n^{\sss U}(i-1)\vert]\leq \frac{(c_{n,\beta}\lambda)^2}{n^{2/3}}\ES[\sup_{i\leq \bar t n^{2/3}}S_{c(i)}^2] \Big(\frac{1}{n}\sum_{h,q\in[n]} S_h^{\alpha} \Big)^2.
\end{equation}%
\begin{lemma}\label{lem:exp_sup_S_squared_oP_n_two_thirds}%
If $\E[S^{2+\alpha}]<\infty$,
\begin{equation}%
\ES[\sup_{k\leq tn^{2/3}}S_{c(k)}^2] = \oP(n^{2/3}).
\end{equation}%
\end{lemma}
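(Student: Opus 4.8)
The plan is to truncate the service requirements at a level $K_n$ that grows to infinity but stays much smaller than $n^{1/3}$ (for instance $K_n=\log n$), use the crude deterministic bound $K_n^2$ for the truncated part, and control the sum of the overshoots via the size-biased estimates of Section~\ref{sec:preliminaries}. Concretely, writing $m_n=\lfloor tn^{2/3}\rfloor$, one has the pointwise inequality $\sup_{k\leq m_n}S_{c(k)}^2\leq K_n^2+\sum_{k\leq m_n}S_{c(k)}^2\mathds 1_{\{S_{c(k)}>K_n\}}$ (the maximiser contributes to the sum unless it is already $\leq K_n$). Taking $\ES[\cdot]$, dividing by $n^{2/3}$ and noting $n^{-2/3}K_n^2=o(1)$, it remains to prove $n^{-2/3}\sum_{k\leq m_n}\ES[S_{c(k)}^2\mathds 1_{\{S_{c(k)}>K_n\}}]=\oP(1)$.

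For the overshoot I would invoke the uniform bound \eqref{eq:average_first_customer_service_time_is_larger} with the nonnegative function $f_n(x)=x^2\mathds 1_{\{x>K_n\}}$; for $\alpha=0$ the arrival order is a uniform random permutation, so $\ES[f_n(S_{c(k)})]=\tfrac1n\sum_{j\in[n]}f_n(S_j)$ holds exactly and the same conclusion follows. In either case, uniformly in $k\leq m_n$, $\ES[S_{c(k)}^2\mathds 1_{\{S_{c(k)}>K_n\}}]\leq(1+\OPS(1))\bigl(\tfrac1n\sum_{j\in[n]}S_j^{2+\alpha}\mathds 1_{\{S_j>K_n\}}\bigr)\big/\bigl(\tfrac1n\sum_{j\in[n]}S_j^\alpha\bigr)$, where the prefactor is the one appearing in \eqref{eq:average_first_customer_service_time_is_larger}: it does not depend on $k$ and converges a.s.\ to a finite constant by the law of large numbers (note that $\E[S^\alpha]<\infty$ follows from $\E[S^{2+\alpha}]<\infty$). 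Summing over $k\leq m_n$ and dividing by $n^{2/3}$ reduces the claim to showing that $\bigl(\tfrac1n\sum_{j\in[n]}S_j^{2+\alpha}\mathds 1_{\{S_j>K_n\}}\bigr)\big/\bigl(\tfrac1n\sum_{j\in[n]}S_j^\alpha\bigr)=\oP(1)$.

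The denominator converges a.s.\ to $\E[S^\alpha]\in(0,\infty)$, while the numerator is $\oP(1)$: by Markov's inequality the probability that it exceeds $\varepsilon$ is at most $\varepsilon^{-1}\E[S^{2+\alpha}\mathds 1_{\{S>K_n\}}]$, which tends to $0$ because $K_n\to\infty$ and $\E[S^{2+\alpha}]<\infty$. This proves the lemma. The main obstacle — and the reason one cannot simply quote Lemma~\ref{lem:expectation_service_times_ordering} — is that the truncation level must grow with $n$, so the constant $C_{f,\sss S}$ there, attached to a fixed $f$, is of no use; one relies instead on the explicit one-step estimate \eqref{eq:average_first_customer_service_time_is_larger}, whose derivation needs no integrability of $f$, together with the uniform integrability of $S^{2+\alpha}$, which is precisely strong enough to make the overshoot vanish even after multiplication by $m_n=\Theta(n^{2/3})$. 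The delicate point is the two-sided requirement on $K_n$: large enough that the tail $\E[S^{2+\alpha}\mathds 1_{\{S>K_n\}}]$ is small, yet small enough that $K_n^2=o(n^{2/3})$; any $K_n\to\infty$ with $K_n=o(n^{1/3})$ works.
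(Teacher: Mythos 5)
Your proof is correct and follows essentially the same route as the paper's: truncate, bound the supremum of the overshoots by the sum, apply the one-step size-biased estimate \eqref{eq:average_first_customer_service_time_is_larger} with the $n$-dependent indicator, and use $\E[S^{2+\alpha}]<\infty$ to make the tail vanish. The only (cosmetic) differences are that the paper truncates at $\varepsilon n^{1/3}$ for fixed $\varepsilon$ and lets $\varepsilon\to0$ at the end, whereas you use a growing level $K_n=o(n^{1/3})$, and that you explicitly note the $\alpha=0$ case, which the paper glosses over.
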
%
\begin{proof}
For $\varepsilon>0$ split the expectation as
\begin{align}\label{eq:first_way_beginning}%
\ES [(\sup_{k\leq tn^{2/3}}S_{c(k)})^2]\leq \ES [\sup_{k\leq tn^{2/3}}S_{c(k)}^2\mathds 1_{\{S_{c(k)}> \varepsilon n^{1/3}\}}] + \varepsilon^2 n^{2/3}.
\end{align}%
We bound the expected value in the first term as
\begin{align}%
\ES [\sup_{k\leq tn^{2/3}}S^2_{c(k)} \mathds 1_{\{S_{c(k)}> \varepsilon n^{1/3}\}}] &\leq \sum_{k\leq tn^{2/3}} \frac{1}{n^{2/3}}\ES[S_{c(k)}^2\mathds 1_{\{S_{c(k)}> \varepsilon n^{1/3}\}}]\nnl
&\leq n^{2/3} t  \ES[S_{c(1)}^2 \mathds 1_{\{S_{c(1)}> \varepsilon n^{1/3}\}}](1 + \OPS(1)),
\end{align}%
where we used  Lemma \ref{lem:expectation_service_times_ordering} with $f(x) = x^2\mathds 1_{\{x>\varepsilon n^{1/3}\}}$.
Computing the expectation explicitly we get
\begin{align}%
t \ES[S_{c(1)}^2 \mathds 1_{\{S_{c(1)}> \varepsilon n^{1/3}\}}] &= t \sum_{i\in[n]}S_i^2 \mathds 1_{\{S_{i}> \varepsilon n^{1/3}\}} \mathbb{P}( c(1) = i) \nnl
&= t \sum_{i\in[n]}S_i^2 \mathds 1_{\{S_{i}> \varepsilon n^{1/3}\}}\frac{S_i^{\alpha}}{\sum_{j\in[n]} S_j^{\alpha}},
\end{align}%
so that the left-hand side of \eqref{eq:first_way_beginning} is bounded by
\begin{align}%
 \frac{t}{\sum_{j\in[n]} S_j^{\alpha}} \sum_{i\in[n]}  S_i^{2+\alpha}\mathds 1_{\{S_{i}> \varepsilon n^{1/3}\}} + \Big(\sum_{i\in[n]}\frac{S_i^{\alpha}}{n}\Big)^2\varepsilon^2,
\end{align}%
which tends to zero as $n\rightarrow\infty$ since $\mathbb E[S^{2+\alpha}] < \infty$ and $\varepsilon>0$ is arbitrary.
\end{proof}

By Lemma \ref{lem:exp_sup_S_squared_oP_n_two_thirds} the right-hand side of \eqref{eq:proof_of_iii_final_bound_U} converges to zero, and this concludes the proof.
\qed

\subsubsection{Proof of \ref{size_biased:eq:fourth_condition_martingale_FCLT} for the upper bound}\label{sec:martingale_jumps_bounding_process_converges_to_zero}
First we split
\begin{align}%
\ES[\sup_{k\leq tn^{2/3}}(M_n^{\sss U}(k) - M_n^{\sss U}(k-1))^2] &= \ES[\sup_{k\leq tn^{2/3}}(A_n^{\sss U}(k) - \ES[A_n^{\sss U}(k) \mid \mathcal F_{k-1}])^2]\nnl
&\leq \ES[ \sup_{k\leq tn^{2/3}}\vert A_n^{\sss U}(k)\vert^2] + \ES[\sup_{k\leq tn^{2/3}}\mathbb E[A_n^{\sss U}(k) \mid \mathcal F_{k-1}]^2]\nnl
&\leq 2\ES [ \sup_{k\leq tn^{2/3}}\vert A_n^{\sss U}(k)\vert^2].
\end{align}%
We then stochastically dominate $(A_n^{\sss U}(k))_{k\leq tn^{2/3}}$ by a sequence of  Poisson processes $(\Pi_k)_{k\leq tn^{2/3}}$, according to
\begin{equation}\label{eq:expectation_sup_A_squared_start}%
A_n^{\sss U}(k) \preceq \Pi_k\Big(c_{n,\beta} S_{c(k)}\sum_{i\in[n]} \frac{S_i^{\alpha}}{n}\Big)=:A'_n(k).
\end{equation}%
Indeed, if $E_1, E_2,\ldots, E_n$ are exponential random variables with parameters $\lambda_1, \lambda_2, \ldots, \lambda_n$, there exists a coupling with a Poisson process $\Pi(\cdot)$ such that $\sum_{i\leq n} \mathds 1_{\{E_i\leq t\}} \leq \Pi(\sum_{i\leq n}\lambda_i t)$. The coupling is constructed as follows. Each random variable $E_i$ is coupled with a Poisson process $\Pi^{(i)}$ with intensity $\lambda_i$ in such a way that $\mathds 1_{\{E_i\leq t\}} \leq \Pi^{(i)} (\lambda_i t)$. Moreover, by basic properties of the Poisson process $\sum_{i\leq n} \Pi^{(i)}(\lambda_i t) \sr{\mathrm d}{=} \Pi(\sum_{i\leq n} \lambda_i t)$.

We bound \eqref{eq:expectation_sup_A_squared_start} via martingale techniques. First, we decompose it as
\begin{align}\label{eq:expectation_sup_A_squared_martingale_decomposition}%
n^{-2/3}\ES [ \sup_{k\leq tn^{2/3}}\vert A^{\sss U}_n(k)\vert^2] \leq &2n^{-2/3}\ES \Big[\Big(\sup_{k\leq tn^{2/3}} \Big\vert A'_n(k) - c_{n,\beta}S_{c(k)}\sum_{i\in[n]}\frac{S_i^{\alpha}}{n}\Big\vert\Big)^2\Big] \nnl
&+ 2n^{-2/3}\ES \Big[\Big(c_{n,\beta}\sup_{k\leq tn^{2/3}} S_{c(k)}\sum_{i\in[n]} \frac{S_i^{\alpha}}{n}\Big)^2\Big]
\end{align}%
Applying Doob's $L^2$ martingale inequality  \cite[Theorem 11.2]{klenke2008probability} to the first term we see that it converges to zero, since
\begin{align}%
n^{-2/3}\ES \Big[\Big(\sup_{k\leq tn^{2/3}}\vert A'_n(k) - S_{c(k)}\sum_{i\in[n]} \frac{S_i^{\alpha}}{n}\Big\vert\Big)^2\Big] &\leq 4n^{-2/3}\ES\Big[\Big\vert A'_n(tn^{2/3}) - S_{c(tn^{2/3})}\sum_{i\in[n]}\frac{S_i^{\alpha}}{n}\Big\vert^2\Big]\nnl
&= 4n^{-2/3} \ES \Big[S_{c(tn^{2/3})}\sum_{i\in[n]}\frac{S_i^{\alpha}}{n}\Big].
\end{align}%
The last equality follows from the expression for the variance of a Poisson random variable. The right-most term converges to zero by Lemma \ref{lem:1+alphaConditionalMoment}. We now bound the second term in \eqref{eq:expectation_sup_A_squared_martingale_decomposition}, as
\begin{align}\label{eq:expectation_sup_A_squared_martingale_drift_estimate}%
n^{-2/3}\ES \Big[\Big(\sup_{k\leq tn^{2/3}} S_{c(k)}\sum_{i\in[n]} \frac{S_i^{\alpha}}{n}\Big)^2\Big] &= n^{-2/3}\Big(\sum_{i\in[n]} \frac{S_i^{\alpha}}{n}\Big)^2  \ES [(\sup_{k\leq tn^{2/3}}S_{c(k)})^2]
%
\end{align}%
By Lemma \ref{lem:exp_sup_S_squared_oP_n_two_thirds} the right-hand side of \eqref{eq:expectation_sup_A_squared_martingale_drift_estimate} converges to zero, concluding the proof of \ref{size_biased:eq:fourth_condition_martingale_FCLT}.
\qed

\subsection{Convergence of the scaling limit}
As a consequence of \eqref{eq:reflected_process_domination} and Theorem \ref{MainTheorem_U} we have  that $Q_n(k) = \OP(n^{1/3})$ for $k = O(n^{2/3})$. In fact, $n^{-1/3}Q_n(k)$ is tight when $k=O(n^{2/3})$, as the following lemma shows:

\begin{lemma}\label{lem:sup_Q_converges_to_zero}%
Fix $\bar t>0$. The sequence $n^{-1/3}\sup_{t\leq \bar t} Q_n(t n^{2/3})$ is tight.

\end{lemma}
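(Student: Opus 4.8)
The plan is to deduce tightness directly from the sample-path domination already established, combined with Theorem \ref{MainTheorem_U}. By \eqref{eq:reflected_process_domination} we have, almost surely and for every $k\geq 0$, $0\leq Q_n(k)\leq Q_n^{\sss U}(k)=\phi(N_n^{\sss U})(k)$, and hence $0\leq n^{-1/3}\sup_{t\leq\bar t}Q_n(tn^{2/3})\leq n^{-1/3}\sup_{t\leq\bar t}\phi(N_n^{\sss U})(tn^{2/3})$. It therefore suffices to show that the dominating sequence $n^{-1/3}\sup_{t\leq\bar t}\phi(N_n^{\sss U})(tn^{2/3})$ is tight, since a nonnegative real-valued sequence bounded above by a tight one is itself tight.

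To handle the dominating sequence I would show that it converges in distribution. Theorem \ref{MainTheorem_U} gives $n^{-1/3}\phi(N_n^{\sss U})(\,\cdot\,n^{2/3})\dconv\phi(W)(\,\cdot\,)$ in $(\mathcal D,J_1)$, and I then apply the Continuous-Mapping Theorem to the functional $x\mapsto\sup_{s\leq\bar t}x(s)$ on $(\mathcal D,J_1)$. This functional is not continuous on all of $\mathcal D$ in the $J_1$ topology, but it is continuous at every $x$ with continuous paths: for such $x$, $J_1$-convergence is uniform convergence on compacts, along which the supremum over $[0,\bar t]$ passes to the limit (a short argument with the time-changes $\lambda_n$ shows $\sup_{s\leq\bar t}x_n(s)=\sup_{t\leq\lambda_n^{-1}(\bar t)}x_n(\lambda_n(t))\to\sup_{s\leq\bar t}x(s)$, using $\lambda_n^{-1}(\bar t)\to\bar t$ and continuity of $x$). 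Since $\phi(W)$ is the reflection map applied to a diffusion with continuous sample paths, it has almost surely continuous paths, so the discontinuity set of the functional is $\phi(W)$-null and the Continuous-Mapping Theorem yields $n^{-1/3}\sup_{s\leq\bar t}\phi(N_n^{\sss U})(sn^{2/3})\dconv\sup_{s\leq\bar t}\phi(W)(s)$. A real-valued sequence converging in distribution is tight, and combined with the domination above this proves the lemma.

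The only step I expect to require genuine care is the continuity of the supremum functional in the $J_1$ topology specifically (as opposed to the uniform topology), which is exactly why the a.s.\ continuity of $\phi(W)$ must be invoked; the pathwise domination, the passage to $\phi(W)$, and the final tightness-by-domination step are all immediate once Theorem \ref{MainTheorem_U} is in place. As a remark I would note an alternative route that sidesteps the Continuous-Mapping Theorem: tightness in $(\mathcal D,J_1)$ of the rescaled upper-bound processes together with the continuity of their limit $\phi(W)$ already forces $\sup_{s\leq\bar t}$ of these processes to form a tight family of real random variables; I would, however, carry out the Continuous-Mapping version in detail.
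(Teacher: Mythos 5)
Your proposal is correct and follows essentially the same route as the paper: dominate $Q_n$ by $Q_n^{\sss U}=\phi(N_n^{\sss U})$, apply the Continuous-Mapping Theorem with the supremum functional to Theorem \ref{MainTheorem_U}, and conclude tightness from convergence in distribution of the dominating sequence. Your extra care about the supremum functional only being continuous at continuous paths in $(\mathcal D,J_1)$ is a welcome refinement of the paper's bare citation of Whitt, but it does not change the argument.
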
%

\begin{proof}
The supremum function $f(\cdot)\mapsto \sup_{t\leq \bar t}f(t)$ is continuous in $(\mathcal D, J_1)$ by \cite[Theorem 13.4.1]{StochasticProcess}. In particular,
\begin{equation}%
n^{-1/3}\sup_{t\leq \bar t} Q_n^{\sss U}(t n^{2/3})\dconv\sup_{t\leq \bar t} W(t),\qquad\mathrm{in}~(\mathcal D, J_1).
\end{equation}%
Since $Q_n(k) \leq Q_n^{\sss U}(k)$, the conclusion follows.
\end{proof}%
As an immediate consequence of \eqref{eq:cardinality_nu} and Lemma \ref{lem:sup_Q_converges_to_zero}, we have the following important corollary. Recall that  $\nu_i$ is the set of customers who have left the system or are in the queue at the beginning of the $i$-th service, so that $\vert \nu_i\vert = i + Q_n(i)$. Recall also that $0\leq Q_n(t) \leq Q_n^{\sss U}(t)$.
\begin{corollary}\label{cor:nr_joined_customer_at_time_n_twothirds_is_n_twothirds}
As $n\rightarrow\infty$,
\begin{equation}%
\vert \nu_i \vert = i + \oP (i), \qquad \mathrm{uniformly~in}~i= \OP (n^{2/3}).
\end{equation}%
\end{corollary}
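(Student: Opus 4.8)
The plan is to read the result straight off the identity $|\nu_i| = i + Q_n(i-1)$ from \eqref{eq:cardinality_nu}, combined with the tightness of the diffusion-scaled queue length just proved in Lemma \ref{lem:sup_Q_converges_to_zero}. Fix $\bar t>0$. By \eqref{eq:reflected_process_domination} we have $0\le Q_n(k)\le Q_n^{\sss U}(k)$ for all $k$, and by Theorem \ref{MainTheorem_U} together with continuity of the supremum map on $(\mathcal D, J_1)$ (as used in Lemma \ref{lem:sup_Q_converges_to_zero}), the sequence $n^{-1/3}\sup_{k\le \bar t n^{2/3}}Q_n(k)$ is tight; equivalently $\sup_{k\le \bar t n^{2/3}}Q_n(k)=\OP(n^{1/3})$.

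Now let $i=\OP(n^{2/3})$. Given $\delta>0$, by definition of $\OP$ there is $\bar t$ such that $\mathbb P(i>\bar t n^{2/3})<\delta$ for all $n$ large; on the complementary event $\{i\le \bar t n^{2/3}\}$ we have $Q_n(i-1)\le \sup_{k\le \bar t n^{2/3}}Q_n(k)$. Hence $|\nu_i|-i = Q_n(i-1)=\OP(n^{1/3})$. Since in every later application the relevant values of $i$ are of order $n^{2/3}$, this error is $\OP(n^{1/3})=\oP(n^{2/3})=\oP(i)$, uniformly, which is the claim. If one prefers to avoid the degenerate small-$i$ regime altogether, one can instead state and use the bound in the absolute scale, namely $\sup_{i\le \bar t n^{2/3}}\big||\nu_i|-i\big|=\oP(n^{2/3})$, which is exactly what the bootstrap in Section \ref{sec:proof_main_result} needs.

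I do not expect a genuine obstacle here: the entire content is contained in \eqref{eq:cardinality_nu} and Lemma \ref{lem:sup_Q_converges_to_zero}. The only point requiring a little care is making precise what ``uniformly in $i=\OP(n^{2/3})$'' means, i.e. phrasing the statement as: for every $\varepsilon,\delta>0$ there exist $\bar t$ and $n_0$ such that for $n\ge n_0$, with probability at least $1-\delta$, $\sup_{i\le \bar t n^{2/3}}Q_n(i-1)\le \varepsilon n^{2/3}$. With that formulation the proof is a one-liner modulo this bookkeeping.
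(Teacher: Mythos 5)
Your argument is exactly the paper's: the corollary is stated there as an immediate consequence of the identity $\vert\nu_k\vert=k+Q_n(k-1)$ in \eqref{eq:cardinality_nu} together with the tightness $\sup_{k\le\bar t n^{2/3}}Q_n(k)=\OP(n^{1/3})$ from Lemma \ref{lem:sup_Q_converges_to_zero}, which is precisely what you use. Your additional remark that the claim should be read in the absolute scale $\sup_{i\le\bar t n^{2/3}}\bigl\vert\vert\nu_i\vert-i\bigr\vert=\oP(n^{2/3})$ is a correct and worthwhile clarification of the ``uniformly in $i=\OP(n^{2/3})$'' phrasing, but it does not change the substance of the proof.
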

Intuitively, this implies that the main contribution to the downwards drift in the queue-length process comes from the customers that have left the system, and not from the customers in the queue. Alternatively, the order of magnitude of the queue length, that is $n^{1/3}$, is negligible with respect to the order of magnitude of the customers who have left the system, which is $n^{2/3}$.

In order to prove Theorem \ref{th:main_theorem_delta_G_1} we proceed as in the proof of Theorem \ref{MainTheorem_U}, but we now need to deal with the more complicated drift term. As before, we decompose $N_n(k) = M_n(k) + C_n(k)$, where
\begin{align}%
M_n(k) & = \sum_{i=1}^k (A_n(i) - \ES[A_n(i) \mid \mathcal F_{k-1}]),\nnl
C_n(k) &= \sum_{i=1}^k (\ES[A_n(i) \mid \mathcal F_{k-1}] -1),\nnl
B_n(k) &= \sum_{i=1}^k (\ES[A_n(i)^2 \mid \mathcal F_{i-1}] - \ES[A_n(i) \mid \mathcal F_{i-1}]^2).
\end{align}%
As before, we separately prove the convergence of the drift $C_n(k)$ and of the martingale $M_n(k)$, by verifying the conditions \ref{size_biased:eq:first_condition_martingale_FCLT}-\ref{size_biased:eq:fourth_condition_martingale_FCLT} in Section \ref{sec:proof_thm_U}.
Verifying \ref{size_biased:eq:first_condition_martingale_FCLT} proves to be the most challenging task, while the estimates for \ref{size_biased:eq:second_condition_martingale_FCLT}-\ref{size_biased:eq:fourth_condition_martingale_FCLT} in Section \ref{sec:proof_thm_U} carry over without further complications.
\subsubsection{Proof of \ref{size_biased:eq:first_condition_martingale_FCLT} for the embedded queue}
By expanding $\ES[A_n(i) \mid \mathcal F_{i-1}] -1$ as in \eqref{eq:ConditionedArrivalsFirst_U}, we get
\begin{align}\label{eq:ConditionedArrivalsFirst}%
\ES[A_n(i) \mid \mathcal F_{i-1}] -1&= \Big(c_{n,\beta}\lambda\frac{\sum_{l=1}^n S_l^{\alpha}}{n}\ES[S_{c(i)} \mid \mathcal F_{i-1}]-1\Big) - c_{n,\beta}\ES[S_{c(i)} \mid \mathcal F_{i-1}]\sum_{l\in\nu_{i}\setminus \{c(i)\}}\lambda\frac{S_l^{\alpha}}{n} \notag\\
&\quad- c_{n,\beta}\frac{\lambda}{n}\ES[S_{c(i)}^{1+\alpha} \mid \mathcal F_{i-1}]+ o_{\mathbb P}(n^{-1/3}).
\end{align}%
By further expanding the first term in \eqref{eq:ConditionedArrivalsFirst} as in \eqref{eq:ConditionedArrivalsFirstTerm}, we get
\begin{align}\label{eq:conditioned_arrivals_middle_computation}%
\ES[A_n(i) \mid \mathcal F_{i-1}] - 1 &= \Big(c_{n,\beta} \frac{\lambda}{n}\sum_{j\nin\served{i-1}}S_j^{1+\alpha}-1\Big) - c_{n,\beta}\ES[S_{c(i)} \mid \mathcal F_{i-1}] \sum_{l=i+1}^{i+1+Q_n(i-1)} \lambda \frac{S_{c(l)}^{\alpha}}{n} \nnl
&\quad- c_{n,\beta}\frac{\lambda}{n}\ES[S_{c(i)}^{1+\alpha} \mid \mathcal F_{i-1}] + \oP(n^{-1/3}),
\end{align}%
where in the first equality we have used \eqref{eq:cardinality_nu}. Comparing equation \eqref{eq:conditioned_arrivals_middle_computation} with equation \eqref{eq:conditioned_arrivals_final_expression}, we rewrite the drift as
\begin{equation}%
C_n(k) = C_n^{\sss U}(k) - c_{n,\beta}\lambda\sum_{i=1}^{k}\ES[S_{c(i)} \mid \mathcal F_{i-1}] \sum_{l=i+1}^{i+1+Q_n(i-1)}  \frac{S_{c(l)}^{\alpha}}{n}.
\end{equation}%
Therefore, to conclude the proof of \ref{size_biased:eq:first_condition_martingale_FCLT} it is enough to show that the second term vanishes, after rescaling. We do this in the following lemma:
\begin{lemma}\label{lem:difficult_drift_term_converges_to_zero}
As $n\rightarrow\infty$,
\begin{equation}%
n^{-1/3}c_{n,\beta}\lambda\sum_{i=1}^{\bar t n^{2/3}}\ES[S_{c(i)} \mid \mathcal F_{i-1}] \sum_{l=i+1}^{i+1+Q_n(i-1)}  \frac{S_{c(l)}^{\alpha}}{n} \Pconv 0.
\end{equation}%
\end{lemma}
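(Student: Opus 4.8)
The plan is to bound the double sum termwise using the estimates already established. First, I would use Lemma~\ref{lem:1+alphaConditionalMoment} with $\gamma=0$ to replace $\ES[S_{c(i)} \mid \mathcal F_{i-1}]$ by the constant $\E[S^{1+\alpha}]/\E[S^{\alpha}]$ up to a uniform $\oP(1)$ error, so that the prefactor contributes only a bounded constant (times $1+\oP(1)$), uniformly in $i = \OP(n^{2/3})$. This reduces the task to showing that
\begin{equation}
n^{-1/3}\sum_{i=1}^{\bar t n^{2/3}} \frac{1}{n}\sum_{l=i+1}^{i+1+Q_n(i-1)}  S_{c(l)}^{\alpha}  \Pconv 0.
\end{equation}

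Next, I would control the inner sum. The key point is that the index set $\{i+1,\ldots,i+1+Q_n(i-1)\}$ has cardinality $Q_n(i-1)+1$, and by Lemma~\ref{lem:sup_Q_converges_to_zero} we have $\sup_{i\leq \bar t n^{2/3}} Q_n(i-1) = \OP(n^{1/3})$. Moreover this index set is a subset of $[n]$ of size $\OP(n^{1/3}) = \OP(n^{2/3})$, so I can invoke Lemma~\ref{lem:unif_convergence_random_sums} (with the supremum over all such index sets, or more directly over $\mathcal X\subseteq[n]$ with $|\mathcal X| = \OP(n^{2/3})$) to conclude
\begin{equation}
\sup_{i\leq \bar t n^{2/3}} \frac{1}{n}\sum_{l=i+1}^{i+1+Q_n(i-1)}  S_{c(l)}^{\alpha}  \leq \sup_{\substack{\mathcal X\subseteq[n]\\ |\mathcal X| = \OP(n^{2/3})}}\frac{1}{n}\sum_{j\in\mathcal X}S_j^{\alpha}  = \oP(1).
\end{equation}
(Here I am using that $(S_{c(l)})_l$ is a reordering of $(S_j)_{j\in[n]}$, so summing over the values $S_{c(l)}^{\alpha}$ for $l$ in a set of indices of size $\OP(n^{2/3})$ is the same as summing $S_j^{\alpha}$ over a random subset of $[n]$ of that size.) For $\alpha=0$ this step is even simpler, since the inner sum is just $Q_n(i-1)+1 = \OP(n^{1/3})$ and after division by $n$ is $\oP(1)$.

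Combining these, the whole expression is bounded above by
\begin{equation}
C \cdot n^{-1/3}\cdot \bar t n^{2/3} \cdot \oP(1) \cdot \frac{1}{n} = C\bar t \cdot \oP(1),
\end{equation}
wait --- let me redo the bookkeeping: the outer sum has $\bar t n^{2/3}$ terms, each bounded by (constant)$\times \oP(1)$ coming from the displayed inner-sum bound, and there is an overall factor $n^{-1/3}$; but the inner sum already contains the $1/n$. So the product is $n^{-1/3}\cdot n^{2/3}\cdot \oP(n^{-1/3})$ if one is crude, which is only $\OP(1)$, not $\oP(1)$. To get the gain I instead keep the $1/n$ inside and note that $\sum_{i=1}^{\bar t n^{2/3}} \frac1n \sum_{l} S_{c(l)}^{\alpha} \le \bar t n^{2/3}\cdot \oP(1)\cdot \frac{\OP(n^{1/3})}{n}$ is not sharp either; the correct route is that $\sum_{i=1}^{\bar t n^{2/3}}\frac1n\sum_{l=i+1}^{i+1+Q_n(i-1)}S_{c(l)}^\alpha \le \frac{1}{n}\bigl(\max_j S_j^\alpha\bigr)\sum_{i\le \bar t n^{2/3}}(Q_n(i-1)+1) = \frac{1}{n}\oP(n^{\alpha/(2+\alpha)})\cdot \OP(n^{2/3}\cdot n^{1/3})$, which after multiplying by $n^{-1/3}$ gives $\oP(n^{-1/3+\alpha/(2+\alpha)})=\oP(1)$ since $\alpha/(2+\alpha)<1/3$ for $\alpha<1$, and a separate refinement (using that $Q_n$ is typically $\OP(n^{1/3})$ only pointwise, handled exactly as in Lemma~\ref{lem:size_biased_service_times_unif_convergence}) covers $\alpha=1$.

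\textbf{Main obstacle.} The delicate point is precisely the last paragraph: a naive termwise bound loses a factor of $n^{1/3}$ and gives only tightness, not convergence to zero. The resolution is to exploit that $\max_{j\in[n]} S_j^{\alpha} = \oP(n^{\alpha/(2+\alpha)})$ (Lemma~\ref{lem:max_iid_random_variables}, using $\E[S^{2+\alpha}]<\infty$) together with the fact that $\sum_{i\le \bar t n^{2/3}}Q_n(i-1) = \OP(n)$ (from $\sup Q_n = \OP(n^{1/3})$ over $\OP(n^{2/3})$ indices), so that the whole sum is $n^{-1/3}\cdot\frac{1}{n}\cdot\oP(n^{\alpha/(2+\alpha)})\cdot\OP(n) = \oP(n^{\alpha/(2+\alpha)-1/3})=\oP(1)$ for $\alpha\in[0,1)$; the endpoint $\alpha=1$ requires replacing the deterministic bound $\sup Q_n=\OP(n^{1/3})$ by a martingale/truncation argument of exactly the type used in the proof of Lemma~\ref{lem:size_biased_service_times_unif_convergence}, splitting $S_{c(l)}^{1+\alpha}$ at a threshold $K_n=n^c$ with $c<1/3$.
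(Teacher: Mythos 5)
Your proposal is correct, and its skeleton matches the paper's: both first replace $\ES[S_{c(i)}\mid\mathcal F_{i-1}]$ by a constant via Lemma \ref{lem:1+alphaConditionalMoment} and bound $\sup_{i\le \bar t n^{2/3}}Q_n(i-1)$ by $C_1 n^{1/3}$ w.h.p.\ via Lemma \ref{lem:sup_Q_converges_to_zero}. The difference lies in how the remaining double sum $\sum_{i\le \bar t n^{2/3}}\sum_{l=i+1}^{i+1+C_1n^{1/3}}S_{c(l)}^{\alpha}/n^{4/3}$ is killed. The paper interchanges the order of summation: each index $c(j)$ appears in at most $C_1n^{1/3}$ of the inner sums, so the double sum is at most $C_1\sum_{j\le(\bar t+C_1)n^{2/3}}S_{c(j)}^{\alpha}/n$, which vanishes by the law-of-large-numbers-type estimate of Lemmas \ref{lem:unif_convergence_random_sums}--\ref{lem:size_biased_service_times_unif_convergence}; this never touches $\max_j S_j$. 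You instead bound every summand by $\max_{j}S_j^{\alpha}=\oP(n^{\alpha/(2+\alpha)})$ and count terms, arriving at $\oP(n^{\alpha/(2+\alpha)-1/3})$. This is a valid alternative, and your hedge about the endpoint is unnecessary: at $\alpha=1$ the exponent is exactly $0$, and since Lemma \ref{lem:max_iid_random_variables} gives a genuine small-$o$ (not merely tightness) for the max, the product $\oP(n^{1/3})\cdot\OP(n)$ is $\oP(n^{4/3})$ and the whole expression is $\oP(1)$ --- no separate truncation argument is needed. (Relatedly, your intermediate remark that the crude termwise route yields ``only $\OP(1)$'' is also too pessimistic for the same reason.) The trade-off between the two routes is that yours is tied to the arithmetic $\alpha/(2+\alpha)\le 1/3$, i.e.\ to $\alpha\le 1$, whereas the paper's interchange argument only needs $\frac1n\sum_{j\le Cn^{2/3}}S_{c(j)}^{\alpha}\Pconv 0$ and so is insensitive to the value of $\alpha$ beyond the moment assumption.
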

\begin{proof}%
By Lemma \ref{lem:sup_Q_converges_to_zero}, $\sup_{i\leq \bar t n^{2/3}}Q_n(i)\leq C_1 n^{1/3}$ w.h.p.~for a large constant $C_1$, and by Lemma \ref{lem:1+alphaConditionalMoment}, $\sup_{i\leq \bar tn^{2/3}}\ES[S_{c(i)} \mid \mathcal F_{i-1}]\leq C_2$ w.h.p.~for another large constant $C_2$. This implies that, w.h.p.,
\begin{equation}%
n^{-1/3}c_{n,\beta}\lambda\sum_{i=1}^{\bar t n^{2/3}}\ES[S_{c(i)} \mid \mathcal F_{i-1}] \sum_{l=i+1}^{i+1+Q_n(i-1)}  \frac{S_{c(l)}^{\alpha}}{n}  \leq c_{n,\beta}\lambda C_2 \sum_{i=1}^{\bar t n^{2/3}}\sum_{l=i+1}^{i+1+C_1n^{1/3}}\frac{S_{c(l)}^{\alpha}}{n^{4/3}}.
\end{equation}%
The double sum can be rewritten as
\begin{align}%
c_{n,\beta}\lambda C_2 \sum_{i=1}^{\bar t n^{2/3}}\sum_{l=i+1}^{i+1+C_1n^{1/3}}\frac{S_{c(l)}^{\alpha}}{n^{4/3}} &\leq c_{n,\beta}\lambda C_2 \sum_{j=1}^{\bar tn^{2/3}+C_1n^{1/3}}\min\{j, C_1 n^{1/3}\}\frac{S^{\alpha}_{c(j)}}{n^{4/3}}\nnl
&\leq c_{n,\beta}\lambda C_1 C_2 \sum_{j=1}^{(\bar t+C_1)n^{2/3}} \frac{S^{\alpha}_{c(j)}}{n}.
\end{align}%
The right-most term converges to zero in probability as $n\rightarrow\infty$ by Lemma \ref{lem:size_biased_service_times_unif_convergence}. This concludes the proof.
\end{proof}%
Since
\begin{equation}%
n^{-1/3}C_n(tn^{2/3}) = n^{-1/3}C_n^{\sss U}(tn^{2/3}) - n^{-1/3}c_{n,\beta}\lambda\sum_{i=1}^{tn^{2/3}}\ES[S_{c(i)} \mid \mathcal F_{i-1}] \sum_{l=i+1}^{i+1+Q_n(i-1)}  \frac{S_{c(l)}^{\alpha}}{n},
\end{equation}%
Lemma \ref{lem:difficult_drift_term_converges_to_zero} and the convergence result \eqref{eq:drift_rescaled_final_expression} for $ n^{-1/3}C_n^{\sss U}(tn^{2/3})$ conclude the proof of \ref{size_biased:eq:first_condition_martingale_FCLT}.
\qed

\subsubsection{Proof of \ref{size_biased:eq:second_condition_martingale_FCLT}, \ref{size_biased:eq:third_condition_martingale_FCLT} and \ref{size_biased:eq:fourth_condition_martingale_FCLT} for the embedded queue}
Proceeding as before, we find that
%
%
\begin{align}\label{eq:quadratic_variation_zeroth}%
B_n(k) &= \sum_{i=1}^k(\ES[A_n(i)^2 \mid \mathcal F_{i-1}] - \ES[A_n(i) \mid \mathcal F_{i-1}]^2) \nnl
&= \sum_{i=1}^k(\ES[A_n(i)^2 \mid \mathcal F_{i-1}] - 1) + \OP(kn^{-1/3}),
\end{align}%
%
%
where
\begin{equation}\label{eq:quadratic_variation_first}%
\ES[A_n(i)^2 \mid \mathcal F_{i-1}] = \ES[A_n(i)\mid \mathcal F_{i-1}] + \ES\Big[\sum_{\substack{h,q\nin \nu_{i-1}\\h\neq q}}\mathds{1}_{\{T_{h}\leq  S _{c(i)}S_h/n\}}\mathds{1}_{\{T_{q}\leq  S_{c(i)}S_q/n\}} \mid \mathcal F_{i-1}\Big].
\end{equation}%
%
Similarly as in Section \ref{sec:quadratic_variation_of_upper_bounding_process_converges_to_zero}, we get
\begin{align}\label{eq:BComputations}%
\sum_{\substack{h,q\nin \nu_{i-1}\\h\neq q}}&\ES[\mathds{1}_{\{T_{h}\leq  S_{c(i)}S_h^{\alpha}/n\}}\mathds{1}_{\{T_{q}\leq  S_{c(i)}S_q^{\alpha}/n\}} \mid \mathcal F_{i-1}]  \\
&= \ES[S_{c(i)}^2 \mid \mathcal F_{i-1}]\frac{\lambda ^2}{n^{2}}\Big(\sum_{h=1}^nS_h^{\alpha}\Big)^2 - \ES\Big[S_{c(i)}^2\frac{\lambda ^2}{n^{2}}\sum_{\substack{h,q\in \nu_{i-1}\cup \{c(i)\}\\ \cup \{h = q\}}}S_h^{\alpha}S_q^{\alpha} \mid \mathcal F_{i-1}\Big] + \oP(1)\notag.
\end{align}%
The second term is an error term by Lemma \ref{lem:unif_convergence_random_sums} and Corollary \ref{cor:nr_joined_customer_at_time_n_twothirds_is_n_twothirds}. This implies that $B_n(\cdot)$ can be rewritten as
\begin{equation}%
B_n(k) = \Big(\frac{\lambda}{n}\sum_{h=1}^nS_h^{\alpha}\Big)^2\sum_{i=1}^k  \ES[S_{c(i)}^2 \mid \mathcal F_{i-1}] + o_{\mathbb P}(k),
\end{equation}%
so that
\begin{equation}%
n^{-2/3}B_n(n^{2/3}u) \stackrel{\mathbb P}{\rightarrow} \lambda^2\mathbb E[S^{\alpha}]\E [S^{2+\alpha}]u,
\end{equation}%
which concludes the proof of \ref{size_biased:eq:second_condition_martingale_FCLT}.
\qed

To conclude the proof of Theorem \ref{th:main_theorem_delta_G_1}, we are left to verify \ref{size_biased:eq:third_condition_martingale_FCLT} and \ref{size_biased:eq:fourth_condition_martingale_FCLT}. However, the estimates in Sections \ref{sec:quadratic_variation_jumps_bounding_process_converges_to_zero} and  \ref{sec:martingale_jumps_bounding_process_converges_to_zero} also hold for $B_n(\cdot)$ and $M_n(\cdot)$, since they rely respectively on \eqref{eq:proof_of_iii_final_bound_U} and \eqref{eq:expectation_sup_A_squared_start} to bound the lower-order contributions to the drift. This concludes the proof of Theorem \ref{th:main_theorem_delta_G_1}.
\qed

\section{Conclusions and discussion}\label{sec:conclusion}
{In this paper we have considered a generalization of the $\DG$ queue, which we coined the $\DGa$ queue, a model for the dynamics of a queueing system in which only a finite number of customers can join. In our model, the arrival time of a customer depends on its service requirement through a parameter $\alpha\in[0,1]$. We have proved that, under a suitable heavy-traffic assumption, the diffusion-scaled queue-length process embedded at service completions converges to a stochastic process $W(\cdot)$.
A distinctive characteristic of our results is the so-called \emph{depletion-of-points effect}, represented by a quadratic drift in $W(\cdot)$. A (directed) tree is associated to the $\DGa$ queue in a natural way, and the heavy-traffic assumption corresponds to \emph{criticality} of the associated random tree.
Our result interpolates between two already known results. For $\alpha = 0$ the arrival clocks are i.i.d.~and the analysis simplifies significantly. In this context, \cite{bet2014heavy} proves an analogous heavy-traffic diffusion approximation result. Theorem \ref{th:main_theorem_delta_G_1} can then be seen as a generalization of \cite[Theorem 5]{bet2014heavy}. If $\alpha = 1$, the $\DGa$ queue has a natural interpretation as an exploration process of an inhomogeneous random graph. In this context, \cite{bhamidi2010scaling} proves that the ordered component sizes converge to the excursion of a reflected Brownian motion with parabolic drift. Our result can then also be seen as a generalization of \cite{bhamidi2010scaling} to the \emph{directed} components of directed inhomogeneous random graphs.

Lemma \ref{lem:size_biased_distribution} implies that the distribution of the service time of the first $O(n^{2/3})$ customers to join the queue converges to the $\alpha$-\emph{size-biased} distribution of $S$, irrespectively of the precise time at which the customers arrive. This suggests that it is possible to prove Theorem \ref{th:main_theorem_delta_G_1} by approximating the $\DGa$ queue via a $\DG$ queue with service time distribution $S^*$ such that
\begin{equation}%
\mathbb P(S^*\in \mathcal A) = \E[S^{\alpha}\mathds 1_{\{S\in\mathcal A\}}]/\E[S^{\alpha}],
\end{equation}%
and i.i.d.~arrival times distributed as $T_i\sim\exp (\lambda\E[S^{\alpha}])$. This conjecture is supported by two observations. First, the heavy-traffic conditions for the two queues coincide. Second, the standard deviation of the Brownian motion is the same in the two limiting diffusions. However, this approximation fails to capture the higher-order contributions to the queue-length process. As a result, the coefficients of the negative quadratic drift in the two queues are different, and thus the approximation of the $\DGa$ queue with a $\DG$ queue is insufficient to prove Theorem \ref{th:main_theorem_delta_G_1}.

Surprisingly, the assumption that $\alpha$ lies in the interval $[0,1]$ plays no role in our proof. On the other hand, we see from \eqref{eq:main_theorem_delta_G_1_diffusion_definition} that
\begin{equation}\label{eq:moments_assumption_for_general_alpha}%
\max\{\E[S^{2+\alpha}],\E[S^{1+2\alpha}],\E[S^{\alpha}]\}<\infty
\end{equation}%
is a necessary condition for Theorem \ref{th:main_theorem_delta_G_1} to hold. From this we conclude that Theorem \ref{th:main_theorem_delta_G_1}  remains true as long as $\alpha\in\mathbb R$ is such that \eqref{eq:moments_assumption_for_general_alpha} is satisfied. From the modelling point of view, $\alpha>1$ represents a situation in which customers with larger job sizes have a stronger incentive to join the queue. On the other hand, when $\alpha < 0$ the queue models  a situation in which customers with large job sizes are lazy and thus favour joining the queue later. We remark that the \emph{form} of the limiting diffusion is the same for all $\alpha\in\mathbb R$, but different values of $\alpha$ yield different fluctuations (standard deviation of the Brownian motion), and a different quadratic drift.

\paragraph{Acknowledgments}

This work is supported by the NWO Gravitation {\sc Networks} grant 024.002.003. The work of RvdH is further supported by the NWO VICI grant 639.033.806. The work of GB and JvL is further supported by an NWO TOP-GO grant and by an ERC Starting Grant.

\bibliographystyle{abbrv}

\DeclareRobustCommand{\HOF}[3]{#3} 
\bibliography{library}

\end{document}